\pgfplotsset{width=8.5cm,compat=1.9}
\definecolor{royalblue}{RGB}{0,78,156}
\definecolor{darkraspberry}{rgb}{0.53, 0.15, 0.34}
\definecolor{aurometalsaurus}{rgb}{0.43, 0.5, 0.5}
\definecolor{britishracinggreen}{rgb}{0.0, 0.26, 0.15}
\newtheorem{prop}{Proposition}[section]
\newtheorem{lem}[prop]{Lemma}
\newtheorem{thm}[prop]{Theorem}
\newtheorem{cor}[prop]{Corollary}
\newtheorem{dfn}[prop]{Definition}
\theoremstyle{definition}
\newtheorem{ex}[prop]{Example}
\theoremstyle{remark}
\newtheorem{rmk}[prop]{Remark}
\providecommand{\abs}[1]{\left\lvert{#1}\right\rvert}
\providecommand{\trace}{\operatorname{tr}}
\providecommand{\diag}{\operatorname{diag}}
\providecommand{\normal}{\mathcal{N}}
\providecommand{\reals}{\mathbb{R}}
\newcommand{\Rd}{\reals^d}
\providecommand{\oh}{\mathrm{o}}
\providecommand{\Oh}{\mathrm{O}}
\providecommand{\qnorm}{\Phi^{-1}}
\providecommand{\1}{\mathds{1}}
\providecommand{\dto}{\rightsquigarrow}
\providecommand{\dep}{\mathfrak{D}}
\providecommand{\expec}{\mathbb{E}}
\newcommand{\norm}[1]{\lVert{#1}\rVert}
\newcommand{\Prob}{\mathcal{P}}
\newcommand{\diff}{\mathrm{d}}
\providecommand{\RV}{\mathrm{RV}} %
\newcommand{\Ctr}{\mathrm{ctr}}
\newcommand{\Alc}{\mathrm{alc}}
\newcommand{\Sd}{\mathbb{S}^d} %
\newcommand{\Sdp}{\Sd_{\ge}} %
\newcommand{\Sdpp}{\Sd_{>}} %
\newcommand*{\addFileDependency}[1]{%
  \typeout{(#1)}
  \@addtofilelist{#1}
  \IfFileExists{#1}{}{\typeout{No file #1.}}
}
\newcommand*{\myexternaldocument}[1]{%
    \externaldocument{#1}%
    \addFileDependency{#1.tex}%
    \addFileDependency{#1.aux}%
}
\begin{document}
	
\begin{frontmatter}
	
\title{Measuring dependence between random vectors via optimal transport}
	
\author[1]{Gilles Mordant\corref{cor1}}
\ead{mordantgilles@gmail.com}
	
\author{Johan Segers\corref{cor2}}
\ead{johan.segers@uclouvain.be}
	
\cortext[cor1]{Corresponding author}
\cortext[cor2]{J. Segers gratefully acknowledges funding by FNRS-F.R.S. grant CDR J.0146.19}	
\address[1]{ Universit\"at G\"ottingen, IMS, Goldschmidtstra\ss e 7, 37077 G\"ottingen, Germany}
\address[2]{LIDAM/ISBA, UCLouvain, Voie du Roman Pays 20, 1348 Louvain-la-Neuve, Belgium}

\begin{abstract}
To quantify the dependence between two random vectors of possibly different dimensions, we propose to rely on the properties of the 2-Wasserstein distance. We first propose two coefficients that are based on the Wasserstein distance between the actual distribution and a reference distribution with independent components. The coefficients are normalized to take values between 0 and 1, where 1 represents the maximal amount of dependence possible given the two multivariate margins.
We then make a quasi-Gaussian assumption that yields two additional coefficients rooted in the same ideas as the first two. These different coefficients are more amenable for distributional results and admit attractive formulas in terms of the joint covariance or correlation matrix. Furthermore, maximal dependence is proved to occur at the covariance matrix with minimal von Neumann entropy given the covariance matrices of the two multivariate margins. This result also helps us revisit the RV coefficient by proposing a sharper normalisation. 
The two coefficients based on the quasi-Gaussian approach can be estimated easily via the empirical covariance matrix. The estimators are asymptotically normal and their asymptotic variances are explicit functions of the covariance matrix, which can thus be estimated consistently too. The results extend to the Gaussian copula case, in which case the estimators are rank-based. The results are illustrated through theoretical examples.  Monte Carlo simulations and a case study involving electroencephalography data are proposed in the supplementary material.
\end{abstract}	

\begin{keyword}
 	Bures-Wasserstein distance \sep
	Copula \sep
	Delta method \sep
	Normal scores rank correlation \sep
	RV coefficient \sep
\end{keyword}

\end{frontmatter}

\section{Introduction}

Measuring dependence is a fundamental problem in statistics that has applications in nearly all other domains of science. Because of this importance, it is not surprising that early in their careers, most students learn about the Pearson correlation coefficient, quantifying linear association between two univariate random variables. %
In modern days, the abundance of data makes it possible to consider groups of variables and the question of measuring dependence between two random vectors appears naturally.

\citet{hotelling:1936} proposed to address the matter by finding the linear combinations of both groups of variables that maximise the correlation coefficient. Canonical correlation analysis was born. Not much attention was devoted to the problem for decades and the next development we are aware of is the RV coefficient proposed by 
\citet{escoufier1973}. For a partitioned $d \times d$ covariance matrix
\begin{equation}
\label{eq:Sigma12}
\Sigma = 
\begin{bmatrix}
\Sigma_1 & \Psi \\ \Psi^\top &\Sigma_2
\end{bmatrix},
\end{equation}
with $d = p + q$ and with diagonal blocks $\Sigma_1$ and $\Sigma_2$ of dimensions $p \times p$ and $q \times q$, respectively,
the RV coefficient \citep{escoufier1973, robert1976unifying} is
\begin{equation}
\label{eq:RV}
	\RV(\Sigma) = \frac{\trace(\Psi \Psi^\top)}{\left(\trace(\Sigma_1^2) \trace(\Sigma_2^2)\right)^{1/2}},
\end{equation}
where $\trace(\,\cdot\,)$ is the trace operator and $(\,\cdot\,)^\top$ denotes matrix transposition.
The coefficient is based on the scalar product between certain linear operators associated to the random vectors and is the first extension of the correlation coefficient that is multivariate in nature.
Still, for given diagonal blocks $\Sigma_1$ and $\Sigma_2$ the maximal value attainable is in general smaller than one. In the course of our developments, we will propose another scaling that repairs this minor deficiency (Remark~\ref{rem:RVadj}). 

The following milestone is the work by \citet*{szekely2007measuring}, where a weighted $L_2$ distance between characteristic functions is used to construct a dependence measure. %
Since then, a renewed interest for the question of quantifying dependence between random vectors has grown.
The measure proposed by \citet*{zhu:2017} is of the same nature, involving a weighted integral of the squared covariances between indicators associated to linear combinations with varying coefficient vectors.

To test for independence between several random vectors, \citet{quessy:2010} studies a Cram\'er--von Mises statistic comparing the joint empirical copula with the product of the empirical copulas of the vectors separately. In \citet{Medovikov}, the population version of this quantity lies at the basis of a copula-based dependence measure between several random vectors.

Another line of research considered measuring dependence relying on an aggregation of vectors into variables, an approach which can be seen as extending canonical correlation analysis. 
The multivariate generalisations of Spearman's $\rho$ and Kendall's $\tau$ in \citet{grothe2014measuring} fall into this framework. 
In the same vein, \citet*{Hofert} proposed to compute the correlation between collapsing functions of groups of variables. 

Recently, \citet{puccetti2019measuring} proposed a dependence coefficient based on optimal transportation theory. Alike the RV-coefficient, it is based on traces of covariance matrices but the scaling accommodates for those that are attainable given the ones of both vectors of interest.
The coefficient cannot be used for vectors with different dimensions and is not invariant with respect to permutations of variables within a group.

Still, as we shall see, the (2-)Wasserstein distance is a particularly convenient metric on the space of probability distributions with finite (second) moments and it can be leveraged to construct new dependence coefficients. The interest of this distance for statistical inference is not new but blossomed recently. We refer to \citet{panaretos2019invitation, panaretos2019statistical} for background and surveys.

Recent developments regarding dependence coefficients include \citet{chatterjee2020coefficient} and \citet{azadkia2019simple} as well. The latter are however not directly relevant for our work. After posting the first version of the manuscript, we became aware of the works by \citet{mori2020earth,nies2021transport} and \citet{wiesel2021measuring} also measuring association based on the Wasserstein distance. The coefficient defined in the latter reference is elegant at the population level but the proposed estimator appears impractical for statistical inference. 

In this paper, we propose new dependence coefficients based on the 2-Wasserstein distance. As the asymptotic theory of the empirical Wasserstein distance is currently not yet sufficiently developed to derive the results needed for statistical inference for these coefficients, we also propose quasi-Gaussian counterparts in terms of a partitioned covariance or correlation matrix. Our approach thus shares common points with both Escoufier's RV and Puccetti's coefficients. The proper normalisation of the coefficients involves the interesting side-problem of characterising, among all partitioned covariance matrices $\Sigma$ of the form~\eqref{eq:Sigma12} with fixed diagonal blocks $\Sigma_1$ and $\Sigma_2$, the $p \times q$ cross-covariance matrix $\Psi$ that yields the strongest dependence.

We then propose plug-in estimators and prove their asymptotic normality by means of the delta method.
The asymptotic variances admit analytic formulas and can therefore be estimated by a plug-in approach too, avoiding the need for resampling procedures.
The Fréchet differentiability of the maps that send a covariance or correlation matrix to the coefficients means that the asymptotic distributions of plug-in estimators can be studied in a wide variety of settings, including time series, graphical models, and rank-based estimators.
The approach is akin to the one of estimating the Wasserstein distance between Gaussian distributions in \citet{rippl2016limit}.
In passing, our calculations shed new light on the Fréchet differentiability of the Wasserstein distance derived in that article.

Rescaling the univariate margins to the standard Gaussian distribution prior to computing the correlation matrix has two advantages: first, no moment conditions are required and second, the coefficients become invariant under component-wise increasing transformations. The proposed standardisation is particularly natural in the Gaussian copula case, a model assumption which has been gaining popularity since \citet{liu2009nonparanormal}, for instance for graphical models. We illustrate the coefficients on electroencephalogram (EEG) data modelled in this way in \citet{solea2020copula} in the supplementary material. The estimates relies on the matrix of normal scores rank correlation coefficients, asymptotic expansions of which were established in \citet{klaassen1997efficient}.

The outline of this paper is the following. In Section~\ref{sec:dep}, we propose new dependence coefficients between random vectors exploiting the properties of the Wasserstein distance. In Section~\ref{sec:Gauss}, we introduce a quasi-Gaussian version of the coefficients based on the Bures--Wasserstein distance \citep{BHATIA2019165} between certain covariance matrices. Plug-in estimators and their limiting distributions are treated in Section~\ref{sec:estim}. Section~\ref{sec:disc} concludes and paves the way for further developments.
In the supplementary material, we study the performance of the proposed estimator via Monte Carlo simulations in \ref{sec:simu} and propose an application to the already mentioned EEG data in \ref{sec:EEG}.

\section{Wasserstein dependence coefficients}
\label{sec:dep}
Let $\Prob(\Rd)$ be the set of Borel probability measures on $\Rd$ and let $\Prob_2(\Rd) \subset \Prob(\Rd)$ be the set of such measures with finite second moments.  
For $(\pi, \pi') \in \Prob_2(\Rd)^2$, let $\Gamma(\pi, \pi')$ be the set of couplings $\gamma \in \Prob_2(\reals^{2d})$ of $\pi$ and $\pi'$, that is, probability measures $\gamma$ such that $\gamma(B \times \Rd) = \pi(B)$ and $\gamma(\Rd \times B) = \pi'(B)$ for Borel sets $B \subseteq \Rd$.
Let $W_2$ denote the $2$-Wasserstein distance on $\Prob_2(\Rd)$: its square is
\[
	W_2^2(\pi, \pi') = 
	\inf_{\gamma \in \Gamma(\pi, \pi')} 
	\int_{\reals^{2d}} \norm{v-v'}^2 \, \diff\gamma(v,v'),
	\qquad \pi, \pi' \in \Prob_2(\Rd).
\]
This defines a metric on $\Prob_2(\Rd)$, the origins of which go back to  Kantorovich; see \citet{panaretos2019statistical} for a survey and historical notes.
The infimum is attained and the corresponding $\gamma$ is called an optimal coupling between $\pi$ and $\pi'$.

For a random vector $ (X, Y)$ of dimension $d = p+q$ and with joint law $\pi \in \Prob_2(\Rd)$, we seek to quantify the dependence between the subvectors $X$ and $Y$.
Let $\mu \in \Prob_2(\reals^p)$ and $\nu \in \Prob_2(\reals^q)$ denote the distributions of $X$ and $Y$, respectively.
Note that $\pi$ belongs to $\Gamma(\mu, \nu)$, the set of couplings of $\mu$ and $\nu$.
The assumption that $\pi$ has finite second moments is not a real restriction since we can first transform its univariate margins to a suitable distribution, see Remark~\ref{rem:cop}.

To quantify the dependence between $X$ and $Y$, we compare $\pi$ to $\mu \otimes \nu$, where $\otimes$ denotes product measure---the distribution of an independent coupling.
Let $\Prob_{2,0}(\reals^r)$ be the subset of $\Prob_2(\reals^r)$ of all non-degenerate distributions.
Choose reference laws $\upsilon_1 \in \Prob_{2,0}(\reals^p)$ and $\upsilon_2 \in \Prob_{2,0}(\reals^q)$ and put
\begin{equation}
\label{eq:Td1d2}
	T_{p,q}(\pi; \upsilon_1, \upsilon_2)=
    W_2^2(\pi, \upsilon_1 \otimes \upsilon_2) 
    - W_2^2(\mu \otimes \nu, \upsilon_1 \otimes \upsilon_2) %
    =
    W_2^2(\pi, \upsilon_1 \otimes \upsilon_2) - W_2^2(\mu, \upsilon_1) - W_2^2(\nu, \upsilon_2).
\end{equation}
For the second identity, see for instance the beginning of Section~2 in \citet{panaretos2019statistical}.

\begin{lem}
\label{lem:Td1d2}
For $\pi,\mu,\nu,\upsilon_1,\upsilon_2$ as above, $T_{p,q}$ in \eqref{eq:Td1d2} satisfies the following properties:
\begin{enumerate}[(i)]
\item $T_{p,q}(\pi; \upsilon_1, \upsilon_2) \ge 0$.
\item $T_{p,q}(\mu\otimes \nu; \upsilon_1, \upsilon_2) = 0$.
\item If either $\upsilon_1 = \mu$ and $\upsilon_2 = \nu$ or if both $\upsilon_1$ and $\upsilon_2$ are absolutely continuous, then $T_{p,q}(\pi; \upsilon_1, \upsilon_2) = 0$ implies $\pi = \mu \otimes \nu$.
\end{enumerate}
\end{lem}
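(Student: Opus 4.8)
The engine behind all three parts is the additive splitting of the quadratic cost: writing a point of $\reals^d \times \reals^d$ as $(v,v')$ with $v = (x,y)$ and $v' = (x',y')$ in $\reals^p \times \reals^q$, one has $\norm{v-v'}^2 = \norm{x-x'}^2 + \norm{y-y'}^2$. First I would record the identity $W_2^2(\mu \otimes \nu, \upsilon_1 \otimes \upsilon_2) = W_2^2(\mu,\upsilon_1) + W_2^2(\nu,\upsilon_2)$, which is also what makes the two lines of \eqref{eq:Td1d2} agree. The bound ``$\le$'' comes from gluing the optimal couplings of $(\mu,\upsilon_1)$ and of $(\nu,\upsilon_2)$ into a coupling of the two product measures and reading off its cost from the splitting. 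For ``$\ge$'' --- and, more usefully, for an arbitrary $\pi \in \Gamma(\mu,\nu)$ --- I would take any $\gamma \in \Gamma(\pi,\upsilon_1 \otimes \upsilon_2)$, let $\gamma^X$ and $\gamma^Y$ denote its marginals on the two blocks of $X$-coordinates $(x,x')$ and of $Y$-coordinates $(y,y')$, observe that $\gamma^X \in \Gamma(\mu,\upsilon_1)$ and $\gamma^Y \in \Gamma(\nu,\upsilon_2)$, and deduce from the splitting that $\int \norm{v-v'}^2 \diff\gamma = \int \norm{x-x'}^2 \diff\gamma^X + \int \norm{y-y'}^2 \diff\gamma^Y \ge W_2^2(\mu,\upsilon_1) + W_2^2(\nu,\upsilon_2)$. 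Taking the infimum over $\gamma$ gives $W_2^2(\pi,\upsilon_1 \otimes \upsilon_2) \ge W_2^2(\mu,\upsilon_1) + W_2^2(\nu,\upsilon_2)$, which is part~(i); specialising to $\pi = \mu \otimes \nu$ and combining with ``$\le$'' gives the identity, hence part~(ii).

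For part~(iii), assume $T_{p,q}(\pi;\upsilon_1,\upsilon_2) = 0$ and pick an optimal coupling $\gamma$ of $\pi$ and $\upsilon_1 \otimes \upsilon_2$, which exists. Then the inequality chain above holds with equality throughout; since $\int \norm{x-x'}^2 \diff\gamma^X \ge W_2^2(\mu,\upsilon_1)$ and $\int \norm{y-y'}^2 \diff\gamma^Y \ge W_2^2(\nu,\upsilon_2)$ hold separately, both must be equalities, so $\gamma^X$ is an optimal coupling of $(\mu,\upsilon_1)$ and $\gamma^Y$ one of $(\nu,\upsilon_2)$. Under the first alternative, $\upsilon_1 = \mu$ and $\upsilon_2 = \nu$, there is nothing more to do: $W_2^2(\mu,\upsilon_1) = W_2^2(\nu,\upsilon_2) = 0$, so $W_2^2(\pi, \mu \otimes \nu) = 0$ and $\pi = \mu \otimes \nu$ because $W_2$ is a metric. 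Under the second alternative, $\upsilon_1$ and $\upsilon_2$ absolutely continuous, I would invoke Brenier's theorem \emph{in the direction that moves the absolutely continuous measure}: the optimal coupling of $(\mu,\upsilon_1)$ is then unique and carried by the graph of the Brenier map $T_1$ with $(T_1)_\# \upsilon_1 = \mu$, so that $x = T_1(x')$ holds $\gamma$-almost surely, and likewise $y = T_2(y')$ with $(T_2)_\# \upsilon_2 = \nu$. Hence $\gamma$ is concentrated on $\{((T_1(x'),T_2(y')),(x',y'))\}$ with $(x',y') \sim \upsilon_1 \otimes \upsilon_2$, and pushing forward by $(x',y') \mapsto (T_1(x'),T_2(y'))$ gives $\pi = (T_1 \times T_2)_\#(\upsilon_1 \otimes \upsilon_2) = \big((T_1)_\# \upsilon_1\big) \otimes \big((T_2)_\# \upsilon_2\big) = \mu \otimes \nu$.

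The genuinely delicate step is this absolutely continuous subcase of~(iii): one must legitimately pass from equality in the cost chain to optimality of the block marginals $\gamma^X$ and $\gamma^Y$, and then apply the uniqueness half of Brenier/McCann, set up so that the transport map starts from the absolutely continuous reference $\upsilon_i$, since $\mu$ and $\nu$ are not assumed to have densities and one therefore cannot transport in the opposite direction. Everything else is bookkeeping with the quadratic cost and the definition of $W_2$.
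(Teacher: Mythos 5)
Your proposal is correct and follows essentially the same route as the paper: the additive splitting of the quadratic cost over the two blocks gives (i) by passing to the block marginals of an arbitrary coupling, (ii) is the tensorization identity (which the paper simply cites, whereas you prove it by gluing optimal couplings), and (iii) uses equality in the cost chain plus Brenier's theorem applied from the absolutely continuous reference measures $\upsilon_1,\upsilon_2$ to write $X$ and $Y$ as measurable functions of the independent components $V_1$ and $V_2$, forcing $\pi=\mu\otimes\nu$. The delicate point you flag—transporting from the absolutely continuous side because $\mu,\nu$ need not have densities—is exactly how the paper handles it.
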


\begin{proof}[Proof of Lemma~\ref{lem:Td1d2}]
	(i) Let $V = (V_1, V_2)$ be a random vector with law $\upsilon_1 \otimes \upsilon_2$ and let $((X, Y),V)$ be a coupling of $(X,Y)$ and $V$.
	Then $(X, V_1)$ and $(Y,V_2)$ are couplings of $\mu$ and $\upsilon_1$ and of $\nu$ and $\upsilon_2$, respectively, and thus
	\begin{equation}
	\label{eq:EXY2:minorant}
	\expec[\norm{(X,Y) - V}^2] 
	= \expec[\norm{X-V_1}^2] + \expec[\norm{Y-V_2}^2]
	\ge W_2^2(\mu, \upsilon_1) + W_2^2(\nu, \upsilon_2).
	\end{equation}
	Take the infimum over all couplings $((X, Y),V)$.
	
	(ii) Trivial.
	
	(iii) If $\mu = \upsilon_1$ and $\nu = \upsilon_2$, then $T_{p,q}(\pi; \upsilon_1, \upsilon_2) = W_2^2(\pi, \mu \otimes \nu)$ and the statement is trivial. 
	Suppose that $\upsilon_1$ and $\upsilon_2$ are absolutely continuous. 
	Equality to zero means that there exists an optimal coupling $((X, Y),V)$ of $\pi$ and $\upsilon_1 \otimes \upsilon_2$ such that the inequality in Eq.~\eqref{eq:EXY2:minorant} is an equality and thus that $(X, V_1)$ and $(Y, V_2)$ are optimal couplings of $\mu \otimes \upsilon_1$ and $\nu \otimes \upsilon_2$ respectively. 
	As $\upsilon_1$ and $\upsilon_2$ are absolutely continuous, then, by Brenier's theorem \citep[Theorem~2.12]{villani2008optimal}, there exist two convex functions $\varphi_1 : \reals^{p} \to \reals \cup \{\infty\}$ and $\varphi_2 : \reals^{q} \to \reals \cup \{\infty\}$ such that $X = \nabla \varphi_1(V_1)$ and $Y = \nabla \varphi_2(V_2)$ almost surely.
	Hence, $X$ and $Y$ are independent and their distribution is $\pi = \mu \otimes \nu$.
\end{proof}

For $\upsilon_1$ and $\upsilon_2$ as in Lemma~\ref{lem:Td1d2}(iii), we have $T_{p,q}(\pi; \upsilon_1, \upsilon_2) \ge 0$ with equality if and only if $\pi = \mu \otimes \nu$.
This fact motivates the use of $T_{p,q}$ to quantify dependence between the subvectors $X$ and $Y$ of a random vector $X = (X, Y)$ with law $\pi$.
To obtain a coefficient between $0$ and $1$, we propose to rescale $T_{p,q}(\pi; \upsilon_1, \upsilon_2)$ by the largest possible value over all couplings $\tilde{\pi}$ of $\mu$ and $\nu$, provided these are both non-degenerate:
\begin{equation}
\label{eq:dep}
	\tilde{\dep}(\pi; \upsilon_1, \upsilon_2) =
	\frac{T_{p,q}(\pi; \upsilon_1, \upsilon_2)}{\sup_{\tilde{\pi} \in \Gamma(\mu,\nu)} T_{p,q}(\tilde{\pi}; \upsilon_1, \upsilon_2)}.
\end{equation}
The coefficient is indicated with a tilde to indicate the link and difference with the covariance-matrix-based coefficients defined in Section~\ref{sec:Gauss}.
Under the conditions of Lemma~\ref{lem:Td1d2}(iii) and as $\mu$ and $\nu$ are non-degenerate, the supremum in the denominator in~\eqref{eq:dep} is positive.
In that case,  $\tilde{\dep}(\pi; \upsilon_1, \upsilon_2) \in [0, 1]$, while $\tilde{\dep}(\pi; \upsilon_1, \upsilon_2) = 0$ if and only if $\pi = \mu \otimes \nu$.
The supremum in the denominator is attained since $T_{p,q}(\,\cdot\,;\upsilon_1,\upsilon_2)$ is $W_2$-continuous on $\Prob_2(\Rd)$ and $\Gamma(\mu,\nu)$ is $W_2$-compact in $\Prob_2(\Rd)$, as $W_2$-convergence implies convergence in distribution and the margins are fixed.

From Eq.~\eqref{eq:dep}, we can define two dependence measures that are theoretically particularly appealing.
For integer $m \ge 1$, let $\gamma_m = \normal_m(0, I_m)$ denote the $m$-variate centred and isotropic Gaussian distribution, with $I_m$ the $m \times m$ identity matrix. 

\begin{dfn}[Wasserstein dependence coefficients]
	\label{def:dep}
	For positive integer $d = p+q$ and for $\pi \in \Gamma(\mu,\nu)$ with $\mu \in \Prob_{2,0}(\reals^p)$ and $\nu \in \Prob_{2,0}(\reals^q)$, define
	\begin{align*}
		\tilde{\dep}_1(\pi; p, q)
		&= \tilde{\dep}(\pi; \gamma_{p}, \gamma_{q}) 
		= \frac%
	{W_2^2(\pi, \gamma_d) - W_2^2(\mu, \gamma_{p}) - W_2^2(\nu, \gamma_{q})}%
	{\sup_{\tilde{\pi} \in \Gamma(\mu,\nu)} W_2^2(\tilde{\pi}, \gamma_d) - W_2^2(\mu, \gamma_{p}) - W_2^2(\nu, \gamma_{q})} 		 \\
\intertext{and}
		\tilde{\dep}_2(\pi; p, q)
		&= \tilde{\dep}(\pi; \mu, \nu) 
		= \frac%
	{W_2^2(\pi, \mu \otimes \nu)}%
	{\sup_{\tilde{\pi} \in \Gamma(\mu, \nu)} 
		W_2^2(\tilde{\pi}, \mu \otimes \nu)}. 			
	\end{align*}
	If the dimensions $p$ and $q$ are clear from the context, we just write $\tilde{\dep}_r(\pi)$ for $r \in \{1,2\}$.
\end{dfn}

These measures enjoy the following properties. %
Recall that an orthogonal transformation of Euclidean space is a linear transformation induced by an orthogonal matrix.

\begin{prop}
	\label{prop:deptilde}
	Let $d = p + q$, let $\mu \in \Prob_{2,0}(\reals^p)$ and $\nu \in \Prob_{2,0}(\reals^q)$ and let $\pi \in \Gamma(\mu, \nu)$. The dependence coefficients $\tilde{\dep}_r = \tilde{\dep}_{r}(\,\cdot\,;p,q)$ for $r \in \{1,2\}$ satisfy the following properties:
	\begin{enumerate}[(i)]
	\item $\tilde{\dep}_r(\pi) \in [0, 1]$, while $\tilde{\dep}_r(\pi) = 0$ if and only if $\pi = \mu \otimes \nu$.
	\item There exists $\pi^{(r)} \in \Gamma(\mu, \nu)$ such that $\tilde{\dep}_r(\pi^{(r)}) = 1$.
	\item $\tilde{\dep}_r$ is invariant w.r.t.\ orthogonal linear transformations within the first $p$ and the last $q$ coordinates.
	\end{enumerate}
\end{prop}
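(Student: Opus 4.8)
The plan is to handle the three claims in order, reusing Lemma~\ref{lem:Td1d2} as much as possible. For part~(i), note that both $\gamma_p,\gamma_q$ and $\mu,\nu$ satisfy the hypotheses of Lemma~\ref{lem:Td1d2}(iii): $\gamma_p$ and $\gamma_q$ are absolutely continuous, while $\upsilon_1=\mu$, $\upsilon_2=\nu$ is the other case listed there. Hence $T_{p,q}(\pi;\upsilon_1,\upsilon_2)\ge 0$ with equality iff $\pi=\mu\otimes\nu$, by Lemma~\ref{lem:Td1d2}(i) and~(iii). Since $\mu,\nu$ are non-degenerate, the discussion following~\eqref{eq:dep} gives that the denominator is strictly positive, so $\tilde{\dep}_r(\pi)=T_{p,q}(\pi;\upsilon_1,\upsilon_2)/\sup_{\tilde\pi}T_{p,q}(\tilde\pi;\upsilon_1,\upsilon_2)\in[0,1]$, and it vanishes iff the numerator does, i.e.\ iff $\pi=\mu\otimes\nu$. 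This is essentially bookkeeping on top of the lemma.

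For part~(ii), I would invoke the compactness argument already sketched after~\eqref{eq:dep}: the map $\tilde\pi\mapsto T_{p,q}(\tilde\pi;\upsilon_1,\upsilon_2)$ is $W_2$-continuous on $\Prob_2(\Rd)$ (each of the three Wasserstein-squared terms is $W_2$-continuous, the latter two being constants in $\tilde\pi$), and $\Gamma(\mu,\nu)$ is $W_2$-compact because $W_2$-convergence implies weak convergence and the margins are held fixed (so $\Gamma(\mu,\nu)$ is weakly closed and tight). A continuous function on a compact set attains its supremum, so there is $\pi^{(r)}\in\Gamma(\mu,\nu)$ achieving the denominator; for that coupling numerator equals denominator and $\tilde{\dep}_r(\pi^{(r)})=1$.

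Part~(iii) is the only claim requiring a genuine computation, though still short. Let $A\in\Oh(p)$, $B\in\Oh(q)$ and let $R$ be the block-diagonal orthogonal matrix $\diag(A,B)$ acting on $\Rd$; write $R_\#$ for the induced push-forward on measures. Then the margins transform as $\mu\mapsto A_\#\mu$, $\nu\mapsto B_\#\nu$, and $\pi\mapsto R_\#\pi$. The key facts are: (a) $W_2$ is invariant under a common orthogonal (indeed isometric) transformation of both arguments, $W_2(R_\#\rho,R_\#\rho')=W_2(\rho,\rho')$, since $\|Rv-Rv'\|=\|v-v'\|$ and $R_\#$ is a bijection on couplings; and (b) the two families of reference laws are themselves invariant in the relevant sense — $\gamma_d$ is rotation-invariant so $R_\#\gamma_d=\gamma_d$ (and $A_\#\gamma_p=\gamma_p$, $B_\#\gamma_q=\gamma_q$), while in the second case the reference laws are $\mu,\nu$ themselves, which transform along with the data, i.e.\ $R_\#(\mu\otimes\nu)=(A_\#\mu)\otimes(B_\#\nu)$. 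Combining (a) and (b) shows that each of the three squared Wasserstein terms in the numerator of $\tilde{\dep}_r$ is unchanged when $(\pi,\mu,\nu)$ is replaced by $(R_\#\pi,A_\#\mu,B_\#\nu)$; the supremum in the denominator is likewise unchanged because $\tilde\pi\mapsto R_\#\tilde\pi$ is a bijection of $\Gamma(\mu,\nu)$ onto $\Gamma(A_\#\mu,B_\#\nu)$ preserving the value of $T_{p,q}$. Hence $\tilde{\dep}_r(R_\#\pi;p,q)=\tilde{\dep}_r(\pi;p,q)$.

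The only mild subtlety — the ``main obstacle'', such as it is — is making sure the invariance argument in~(iii) is stated for the \emph{right} reference measures in each of the two cases simultaneously: for $\tilde{\dep}_1$ one uses that $\gamma_p,\gamma_q,\gamma_d$ are genuinely fixed points of the rotations, whereas for $\tilde{\dep}_2$ the reference measures are not fixed but covary with the margins, and one must check $R_\#(\mu\otimes\nu)=(A_\#\mu)\otimes(B_\#\nu)$, which holds because $R$ is block-diagonal and therefore respects the product structure. Everything else is a direct consequence of Lemma~\ref{lem:Td1d2} and the compactness remark already in the text.
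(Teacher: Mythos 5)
Your proposal is correct and follows essentially the same route as the paper: parts (i)–(ii) are read off from Lemma~\ref{lem:Td1d2} together with the positivity and attainment of the supremum already noted after~\eqref{eq:dep}, and part (iii) uses the invariance of $W_2$ under a common block-orthogonal push-forward together with the rotation invariance of $\gamma_p,\gamma_q,\gamma_d$ (for $\tilde\dep_1$) and the covariance of the reference product measure (for $\tilde\dep_2$). Your write-up is merely more explicit than the paper's brief proof, with no substantive difference.
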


\begin{proof}[Proof of Proposition~\ref{prop:deptilde}]
	Assertions~(i) and~(ii) follow in a straightforward way from Lemma~\ref{lem:Td1d2}.

	Assertion~(iii) follows from the invariance of the $2$-Wasserstein distance and the multivariate standard Gaussian distribution with respect to orthogonal transformations. For instance, for any orthogonal transformation $O$ of $\reals^p$ we have $W_2^2(\mu \circ O^{-1}, \gamma_p) = W_2^2(\mu \circ O^{-1}, \gamma_p \circ O^{-1}) = W_2^2(\mu, \gamma_p)$.
\end{proof} 

\begin{rmk}
\label{rem:cop}
If the univariate margins of $\pi$ are continuous, then one can apply the dependence coefficients not to $\pi$ but rather to a measure sharing the same copula and with margins admitting a finite second moment.
The resulting coefficient would then be invariant with respect to permutations within the first $p$ and last $q$ coordinates and also to monotone increasing and decreasing transformations of the $d$ univariate margins.
\end{rmk}

The two dependence measures are illustrated in Figure~\ref{fig: Representation}. Up to scaling, $\tilde{\dep}_2$ is the (squared) distance between $\pi$ and $\mu \otimes \nu$, whereas $\tilde{\dep}_1$ is the excess squared distance from $\pi$ to $\gamma_d$ compared to the one between $\mu \otimes \nu$ and $\gamma_d$.

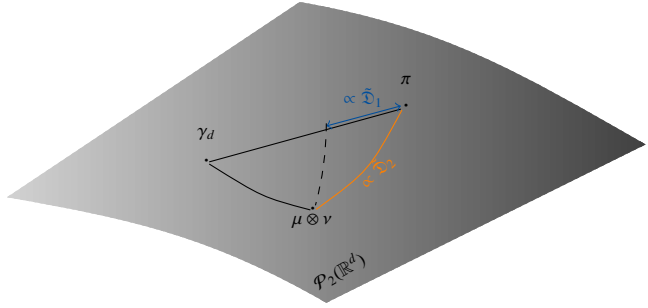
\begin{figure}
\center
\begin{tikzpicture}[scale=0.7]
\path[name path=border1] (0,0) to[out=-10,in=150] (6,-2);
\path[name path=border2] (12,1) to[out=150,in=-10] (5.5,3.2);
\path[name path=redline] (0,-0.4) -- (12,1.5);
\path[name intersections={of=border1 and redline,by={a}}];
\path[name intersections={of=border2 and redline,by={b}}];

\shade[left color=gray!35,right color=gray!150] 
  (0,0) to[out=-10,in=150] (6,-2) -- (12,1) to[out=150,in=-10] (5.5,3.7) -- cycle;

\node[label=above: {\scriptsize$\gamma_d$}] at (3.75,0.65) {$\cdot$} ;
\node[label=above:{\scriptsize$\pi$}] at (7.5,1.7) {$\cdot$} ;
\node[label=right : {}] at (5.75,-.25) {$\cdot$} ;
\node[label=left:] at (5.73,-.45) {\scriptsize$\mu \otimes \nu$ } ;

\draw[-, color=black] (3.8,0.67) --node[below] {} ++ (3.6,1.0) ;
\draw[ color=black] (3.8,0.63) .. controls (4.8,0)  .. (5.7,-0.24);
\draw[<->, color=royalblue] (6,1.35) --node[above] {\scriptsize$\propto \tilde\dep_1$} ++ (1.4,.38) ;
\draw[dashed] (6,1.4) arc (-0:-15:6cm);
\draw[color= orange] (5.81,-0.23) .. controls (6.8,0.5)  .. (7.42,1.65);
\node[rotate=45, color=orange] at (6.95,0.5) {\scriptsize$\propto \tilde\dep_2$};

\node[rotate=30] at (6.25,-1.4) {\scriptsize$\Prob_2(\mathbb{R}^d)$};
\end{tikzpicture}
\caption{\label{fig: Representation}Representation of the proposed dependence coefficients}
\end{figure}

\section{A quasi-Gaussian approach}
\label{sec:Gauss}
Although theoretically appealing, the actual computation of the two Wasserstein dependence coefficients in Definition~\ref{def:dep} is involved, not in the least because of the suprema in the denominators.
Moreover, statistical inference on the coefficients is hampered by a lack of a comprehensive large-sample theory for the Wasserstein distance involving empirical measures. We refer to \citet{panaretos2019statistical} for a recent review of the known results. Further contributions by \citet{tameling2019empirical}, \citet{lei2020convergence}, \citet{manole2021sharp} or \citet{delbarrio2021central} improve the understanding of the empirical Wasserstein distance. The latter constitutes a concrete step towards statistical inference for the coefficients of Definition~\ref{def:dep}. Additional theory is still needed, however.

Despite these drawbacks, the story does not end here.
We instead propose a quasi-Gaussian approach based on covariance matrices.
We start in Section~\ref{se: QuasiGaus} by defining the modified coefficients.
The calculation of the two coefficients relies on an interesting optimisation problem yielding an elegant solution in terms of the minimum-entropy covariance matrix with given diagonal blocks in Section~\ref{sec:Gauss:minent}.
The same matrix also realises the maximum value of the RV coefficient for fixed diagonal blocks, motivating the definition of an adjusted RV coefficient with range $[0, 1]$.
The coefficients are illustrated for various families of structured covariance matrices in Section~\ref{sec:Gauss:ex}.
We conclude in Section~\ref{sec: G-copulas} with some thoughts on the application of the coefficients to distributions with standard Gaussian margins, which we call G-copulas.

\subsection{Definition and basic properties}
\label{se: QuasiGaus}

The Wasserstein distance between centred Gaussian distributions is given by the so-called Bures--Wasserstein distance between their covariance matrices. 
We refer to \citet{BHATIA2019165} for an introduction to this distance between positive semi-definite matrices and to  \citet{dowson1982frechet, olkin1982distance} for a proof that this distance coincides with the Wasserstein distance for two (centred) measures belonging to the same elliptical family. 
Let $\Sd = \{ A \in \reals^{d \times d} : A^\top = A \}$ be the set of real symmetric $d \times d$ matrices, $\Sdp \subset \Sd$ the set of positive semi-definite ones and $\Sdpp \subset \Sdp$ the set of positive definite ones.

\begin{dfn}
\label{def:dW}
The squared \emph{Bures--Wasserstein distance} between $\Sigma, \Xi \in \Sdp$ is
\begin{equation}
\label{eq:W2cor}
		d_W^2(\Sigma, \Xi) := 
		W_2^2\bigl(\normal_d(0, \Sigma), \, \normal_d(0, \Xi)\bigr) 
		= \trace(\Sigma) + \trace(\Xi) - 2 \trace\bigl( ( \Sigma^{1/2}\Xi\Sigma^{1/2})^{1/2}\bigr).
\end{equation}
\end{dfn}

The right-hand side of \eqref{eq:W2cor} is symmetric in $\Sigma$ and $\Xi$, a fact which follows from the identity with the Wasserstein distance, but which can also be proven algebraically from \eqref{eq:abracadabra} below together with the cyclic permutation property of the trace operator.
To introduce the quasi-Gaussian version of the Wasserstein dependence coefficients in Definition~\ref{def:dep}, let $d = p+q$ be integer, let $\Sigma_1 \in \mathbb{S}^p_\ge$ and $\Sigma_2 \in \mathbb{S}^q_\ge$, and introduce the set
\begin{equation}
\label{eq:Sigma}
	\Gamma(\Sigma_1, \Sigma_2) = \left\{
		\Sigma \in \Sdp : \
		\Sigma = \begin{bmatrix}
			\Sigma_1 & \Psi \\
			\Psi^\top & \Sigma_2
		\end{bmatrix} \text{ for some } \Psi \in \reals^{p \times q}
	\right\}.
\end{equation}
If $(X, Y)$ is a random vector of dimension $d$ such that $X$ and $Y$ have covariance matrices $\Sigma_1$ and $\Sigma_2$, respectively, then its joint covariance matrix $\Sigma$ belongs to $\Gamma(\Sigma_1, \Sigma_2)$.
Put
\begin{equation}
\label{eq:Sigma0}
\Sigma_0 := \begin{bmatrix} \Sigma_1 & 0\\ 0 & \Sigma_2 \end{bmatrix},
\end{equation}
the covariance matrix of an independent coupling of $X$ and $Y$.
To avoid division by zero in the next definition, we need to exclude the zero matrix: let $\mathbb{S}^d_{\ge,0} = \mathbb{S}^d_\ge \setminus \{ 0 \}$.
Recall $d_W$ in Definition~\ref{def:dW}.

\begin{dfn}[Quasi-Gaussian Wasserstein dependence coefficients]
	\label{def:depFin}
	For $\Sigma \in \Gamma(\Sigma_1, \Sigma_2)$ with $\Sigma_1 \in \mathbb{S}^p_{\ge,0}$ and $\Sigma_2 \in \mathbb{S}^q_{\ge,0}$, define
	\begin{align*}
		&\dep_1(\Sigma; p, q) = \frac%
	{d_W^2(\Sigma,I_d) - d_W^2(\Sigma_1,I_p) - d_W^2(\Sigma_2,I_q)}%
	{\sup_{\tilde{\Sigma} \in \Gamma(\Sigma_1,\Sigma_2)} d_W^2(\tilde{\Sigma},I_d) - d_W^2(\Sigma_1,I_p) - d_W^2(\Sigma_2,I_q)}, 		 \\
\intertext{and }
	&	\dep_2(\Sigma; p, q) = \frac%
	{d_W^2(\Sigma,\Sigma_0)}%
	{\sup_{\tilde{\Sigma} \in \Gamma(\Sigma_1,\Sigma_2)} d_W^2(\tilde{\Sigma},\Sigma_0)}.
	\end{align*}
	If the random vector $(X, Y)$ in dimension $d = p+q$ has law $\pi \in \Prob_2(\reals^d)$ and covariance matrix $\Sigma$, then we also put $\dep_r(X, Y) = \dep_r(\pi; p, q) = \dep_r(\Sigma; p, q)$ for $r \in \{1, 2\}$.
\end{dfn}

These coefficients are to be compared with those in Definition~\ref{def:dep}.
The Wasserstein distances in the latter have now been replaced by those between the centred Gaussian distributions with the same covariance matrices.
Furthermore, in the denominator, the supremum is now with respect to all Gaussian couplings rather than between all couplings, Gaussian or not.
Even when $X$ and $Y$ are themselves Gaussian, it is, to the best of our knowledge, an open question whether the supremum over all Gaussian couplings is equal to the supremum over all couplings.

Definition~\ref{def:depFin} leaves open the question of the calculation of the suprema in the denominators of $\dep_1$ and $\dep_2$.
According to Proposition~\ref{prop:dep}, the suprema are attained, but the matrices where this occurs and the values of the suprema remain unspecified.
The problem turns out to have an elegant and explicit solution described in Section~\ref{sec:Gauss:minent}. Proposition~\ref{prop:dep:Gauss} leverages this fact to provide a computationally-friendly version of the proposed dependence coefficients.

\begin{prop}
	\label{prop:dep}
	Let $d = p + q$ and let $\Sigma \in \Gamma(\Sigma_1, \Sigma_2)$ with $\Sigma_1 \in \mathbb{S}^p_{\ge,0}$ and $\Sigma_2 \in \mathbb{S}^q_{\ge,0}$. The dependence coefficients $\dep_r = \dep_{r}(\,\cdot\,;p,q)$ for $r \in \{1,2\}$ satisfy the following properties:
	\begin{enumerate}[(i)]
		\item $\dep_r(\Sigma) \in [0, 1]$, while $\dep_r(\Sigma) = 0$ if and only if $\Sigma = \Sigma_0$ in \eqref{eq:Sigma0}.
		\item There exists $\Sigma^{(r)} \in \Gamma(\Sigma_1, \Sigma_2)$ such that $\dep_r(\Sigma^{(r)}) = 1$.
		\item $\dep_r$ is invariant w.r.t.\ orthogonal transformations within the first $p$ and the last $q$ coordinates: for orthogonal matrices $O_1$ and $O_2$ of dimensions $p \times p$ and $q \times q$, respectively, we have
		\[
			\dep_r(O \Sigma O^\top) 
			= \dep_r(\Sigma)
			\quad \text{ with } \quad 
			O = \begin{bmatrix} O_1 & 0 \\ 0 & O_2 \end{bmatrix}.
		\]
	\end{enumerate}
\end{prop}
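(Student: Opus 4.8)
The plan is to transcribe the proof of Proposition~\ref{prop:deptilde} into the covariance-matrix setting, using that the squared Bures--Wasserstein distance $d_W^2$ inherits from $W_2^2$ every property needed there. Concretely: $d_W^2$ is a squared metric on $\Sdp$, hence non-negative and vanishing only on the diagonal, because $\normal_d(0,\Sigma)=\normal_d(0,\Xi)$ iff $\Sigma=\Xi$; it is jointly continuous on $\Sdp\times\Sdp$, since the matrix square root is continuous on $\Sdp$ and \eqref{eq:W2cor} expresses $d_W^2$ in terms of traces of such roots; and it is invariant under simultaneous orthogonal conjugation, $d_W^2(OAO^\top,OBO^\top)=d_W^2(A,B)$ for orthogonal $O$, which follows either from the isometry invariance of $W_2^2$ or directly from \eqref{eq:W2cor} via $(OAO^\top)^{1/2}=OA^{1/2}O^\top$. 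Finally, $\Gamma(\Sigma_1,\Sigma_2)$ from \eqref{eq:Sigma} is compact: it is closed (the intersection of the closed PSD cone with the affine subspace fixing the diagonal blocks) and bounded (each of its elements is positive semi-definite with trace $\trace(\Sigma_1)+\trace(\Sigma_2)$). Hence both suprema in Definition~\ref{def:depFin} are attained and finite.

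For (i), consider first $\dep_2$. As $\Sigma_0$ is block-diagonal, $\normal_d(0,\Sigma_0)=\normal_p(0,\Sigma_1)\otimes\normal_q(0,\Sigma_2)$, so the numerator $d_W^2(\Sigma,\Sigma_0)$ is $\ge0$ and is zero iff $\Sigma=\Sigma_0$. The denominator is $\ge0$ as a supremum of such quantities, and it is in fact $>0$: since $\Sigma_1,\Sigma_2\ne0$, one may pick a non-zero $\Psi\in\reals^{p\times q}$ (e.g.\ $\Psi=\varepsilon\,\Sigma_1^{1/2}K\Sigma_2^{1/2}$ with $\varepsilon>0$ small and $K$ a suitable non-zero contraction) for which the block matrix with diagonal blocks $\Sigma_1,\Sigma_2$ and off-diagonal block $\Psi$ lies in $\Gamma(\Sigma_1,\Sigma_2)\setminus\{\Sigma_0\}$, where $d_W^2(\,\cdot\,,\Sigma_0)>0$. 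As $\Sigma\in\Gamma(\Sigma_1,\Sigma_2)$, the numerator is bounded by the denominator, so $\dep_2(\Sigma)\in[0,1]$ with the stated equality case. For $\dep_1$, the key point is that $\gamma_p\otimes\gamma_q=\gamma_d=\normal_d(0,I_d)$, so the numerator of $\dep_1(\Sigma)$ equals $T_{p,q}(\normal_d(0,\Sigma);\gamma_p,\gamma_q)$; by Lemma~\ref{lem:Td1d2}(i) this is $\ge0$, and by Lemma~\ref{lem:Td1d2}(iii)---applicable since $\gamma_p,\gamma_q$ are absolutely continuous---it is zero iff $\normal_d(0,\Sigma)=\normal_d(0,\Sigma_0)$, i.e.\ iff $\Sigma=\Sigma_0$. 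The same identity shows the quantity inside the denominator of $\dep_1$ is $T_{p,q}(\normal_d(0,\tilde\Sigma);\gamma_p,\gamma_q)\ge0$, positive at the $\tilde\Sigma$ just exhibited; finiteness and positivity of the denominator and the bound $\dep_1(\Sigma)\le1$ then follow as for $\dep_2$.

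Part (ii) is then immediate: by compactness of $\Gamma(\Sigma_1,\Sigma_2)$ and continuity, each of the maps $\tilde\Sigma\mapsto d_W^2(\tilde\Sigma,I_d)-d_W^2(\Sigma_1,I_p)-d_W^2(\Sigma_2,I_q)$ and $\tilde\Sigma\mapsto d_W^2(\tilde\Sigma,\Sigma_0)$ attains its maximum at some $\Sigma^{(r)}\in\Gamma(\Sigma_1,\Sigma_2)$, at which the numerator equals the denominator, so $\dep_r(\Sigma^{(r)})=1$. For (iii), fix $O=\diag(O_1,O_2)$ with $O_1,O_2$ orthogonal. Conjugation by $O$ sends $\Sigma$ to a matrix with diagonal blocks $O_1\Sigma_1O_1^\top$ and $O_2\Sigma_2O_2^\top$ and off-diagonal block $O_1\Psi O_2^\top$, so $\dep_r(O\Sigma O^\top)$ is computed relative to those diagonal blocks, whose block-diagonal combination is $O\Sigma_0O^\top$; moreover $\tilde\Sigma\mapsto O\tilde\Sigma O^\top$ is a bijection of $\Gamma(\Sigma_1,\Sigma_2)$ onto $\Gamma(O_1\Sigma_1O_1^\top,O_2\Sigma_2O_2^\top)$, with inverse conjugation by $O^\top$. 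Since $OI_dO^\top=I_d$, $O_1I_pO_1^\top=I_p$ and $O_2I_qO_2^\top=I_q$, the orthogonal invariance of $d_W^2$ makes every term in $\dep_r(O\Sigma O^\top)$---both the numerator and the expression inside the supremum---equal to the corresponding term for $\Sigma$ after undoing the conjugation; reindexing the supremum by the above bijection yields $\dep_r(O\Sigma O^\top)=\dep_r(\Sigma)$.

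The only parts that are not pure transcription are the compactness of $\Gamma(\Sigma_1,\Sigma_2)$---needed so the suprema are finite and attained---and the strict positivity of the denominators, which is exactly where the non-degeneracy $\Sigma_1,\Sigma_2\ne0$ enters, through the existence of an admissible coupling distinct from $\Sigma_0$. Both are routine, so I do not anticipate any real obstacle beyond carefully tracking how conjugation acts on the diagonal blocks in part (iii).
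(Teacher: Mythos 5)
Your proof is correct and follows essentially the same route as the paper's: part (i) by identifying $d_W^2$ with the squared Wasserstein distance between centred Gaussians and invoking Lemma~\ref{lem:Td1d2} (as in Proposition~\ref{prop:deptilde}), part (ii) by continuity of $d_W$ together with compactness of $\Gamma(\Sigma_1,\Sigma_2)$, and part (iii) by orthogonal invariance of $d_W$. You merely spell out details the paper leaves implicit---the compactness argument, the strict positivity of the denominators via a non-zero admissible $\Psi$, and the fact that the Gaussian-coupling supremum must be handled directly rather than borrowed verbatim from the all-couplings version---which is a sound and slightly more careful rendering of the same argument.
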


\begin{proof}[Proof of Proposition~\ref{prop:dep}]
	Assertion~(i) follows from Assertion~(i) in Proposition~\ref{prop:deptilde} upon identifying $d_W^2$ with the squared Wasserstein distance between centered Gaussian distributions as in~\eqref{eq:W2cor}.
	Assertion~(ii) is a consequence of continuity of $d_W$ and the fact that the set $\Gamma(\Sigma_1, \Sigma_2)$ is compact.
	Assertion~(iii), finally, follows from the invariance of $d_W$ with respect to orthogonal transformations.
\end{proof}

As the coefficients $\dep_{r}$ in Definition~\ref{def:depFin} are defined in terms of covariance matrices---including correlation matrices---they can be applied whenever such matrices show up and inference on them is feasible.
A case we have in mind is when the copula of $(X, Y)$ is Gaussian and $\Sigma$ is the correlation matrix of the random vector obtained from $(X, Y)$ by transforming the univariate margins to the standard normal distribution (Section~\ref{sec: G-copulas}).
Plugging in an estimate of the covariance or correlation matrix produces estimates of the coefficients the asymptotic distributions of which can be obtained by the delta method (Section~\ref{sec:estim}).
This approach is akin to the one in \citet{rippl2016limit}, who propose inference on the Wasserstein distance between Gaussian distributions based on estimated means and covariance matrices.

As one may expect, the simplification to covariance matrices comes at a price: in Proposition~\ref{prop:dep}, a vanishing coefficient is no longer a guarantee for independence as it was in Proposition~\ref{prop:deptilde} but only implies that all cross-covariances are zero.
This fact property is shared with the RV coefficient and the one in \citet{puccetti2019measuring}.

Assume all diagonal elements of $\Sigma$ are positive and let $R = D_\Sigma^{-1/2} \Sigma D_\Sigma^{-1/2}$ be the correlation matrix associated to $\Sigma$, where $D_\Sigma$ is the diagonal matrix having the same diagonal as $\Sigma$.
Then $\dep_r(\Sigma)$ and $\dep_r(R)$ are different in general.
Hence, as in principal component analysis, it may be a good idea to scale variables to have unit variance prior to the use of the coefficients.

\subsection{Majorisation of vectors of eigenvalues}
\label{sec:Gauss:minent}

To explain the intuition, let $R$ be a $d \times d$ correlation matrix with eigenvalues $\lambda_1 \ge \ldots \ge \lambda_d \ge 0$.
Since it holds that $\lambda_1+\cdots+\lambda_d = \trace(R)=d$, the proportion of the total variance explained by the first $k$ principal components is $(\lambda_1+\cdots+\lambda_k)/ d$. 
The larger this proportion, the better the quality of representation of the $d$ standardised variables on the linear subspace spanned by the first $k$ principal components.
Intuitively, the dimension reduction is more successful as the eigenvalues are more spread out. 
The worst case in this respect occurs when $\trace(R)$ is the identity matrix and all eigenvalues are equal to $1$. The idea also applies in general for covariance matrices and underlies many inequalities in mathematics. It goes back to \citet*{HLP} and even earlier to the works of I. Schur. This theory will be key to derive the maxima in $\dep_1$ and $\dep_2$.

We rely on the monograph by \citet{marshall2011b}, from which the next definition and proposition are taken: see Definition~1.A.1 on page~8 and Proposition~3.C.1 on page~92, as well as the historical remarks on pages~93--95.

\begin{dfn}[Majorization]
	For two vectors $x,y \in \reals^d$, we say that $y$ majorizes $x$, notation $x \prec y$, if
	\[
	\left\{
	\begin{array}{rcl}
	\sum_{i=1}^k x_{[i]} &\leq& \sum_{i=1}^k y_{[i]}, \qquad k=1,\ldots, d-1, \\[1ex]
	\sum_{i=1}^d x_{[i]} &=& \sum_{i=1}^d y_{[i]},
	\end{array}
	\right.
	\]
	where $x_{[1]} \ge \ldots \ge x_{[d]}$ denote the elements of $x$ in decreasing order, and similarly for $y$.
\end{dfn}

When applied to the vectors of eigenvalues $\lambda$ and $\mu$ of two $d \times d$ covariance matrices $\Sigma$ and $\Xi$, respectively, the relation $\lambda \prec \mu$ states that, for any $k = 1,\ldots,d-1$, the reduction to the first $k$ principal components is more successful for $\Xi$ than for $\Sigma$ in terms of proportion of variance explained.
The link between majorisation and the computation of the suprema in the denominators of $\dep_1$ and $\dep_2$ stems from the following property \citep[Proposition~3.C.1]{marshall2011b}.

\begin{prop}[Majorisation and convexity]
	\label{prop:Schur}
	If $I \subseteq \reals$ is an interval and if $g : I \to \reals$ is convex, then for all $x, y \in I^d$, we have
	\[
	x \prec y \implies \sum_{i=1}^d g(x_i) \le \sum_{i=1}^d g(y_j).
	\]
\end{prop}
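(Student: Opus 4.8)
The plan is to prove the inequality by Abel summation (summation by parts) combined with the monotonicity of difference quotients of convex functions. Since both the hypothesis $x \prec y$ and the conclusion $\sum_i g(x_i) \le \sum_i g(y_i)$ are unaffected if we permute the entries of $x$ and of $y$ separately, I would first reduce to the case where both vectors are already sorted in decreasing order, so that $x_i = x_{[i]}$ and $y_i = y_{[i]}$ for all $i$, and all these numbers lie in $I$.

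Next, for each index $k$ I would introduce the secant slope $c_k = \bigl(g(y_k) - g(x_k)\bigr)/(y_k - x_k)$ when $y_k \ne x_k$, and a subgradient of $g$ at $x_k$ when $y_k = x_k$; in either case $g(y_k) - g(x_k) = c_k\,(y_k - x_k)$. Writing $s_k = \min(x_k,y_k)$ and $t_k = \max(x_k,y_k)$, the fact that $x$ and $y$ are similarly ordered gives $s_k \ge s_{k+1}$ and $t_k \ge t_{k+1}$, and convexity of $g$ — precisely, monotonicity of the two-variable difference quotient $(s,t)\mapsto (g(t)-g(s))/(t-s)$ in each argument (the three-chords lemma) — yields $c_1 \ge c_2 \ge \cdots \ge c_d$. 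Denoting $U_k = \sum_{i=1}^k (y_i - x_i)$ and $U_0 := 0$, the majorization hypothesis says exactly that $U_k \ge 0$ for $k = 1,\ldots,d-1$ and $U_d = 0$. Abel summation then gives
\[
	\sum_{k=1}^d \bigl(g(y_k) - g(x_k)\bigr) = \sum_{k=1}^d c_k\,(U_k - U_{k-1}) = \sum_{k=1}^{d-1} (c_k - c_{k+1})\,U_k \ge 0,
\]
the term carrying $U_d$ dropping because $U_d = 0$, and each remaining summand being nonnegative since $c_k \ge c_{k+1}$ and $U_k \ge 0$. This is the asserted inequality.

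The main obstacle is the bookkeeping behind the chain $c_1 \ge \cdots \ge c_d$, in particular the degenerate indices with $x_k = y_k$, where $c_k$ is only pinned down up to the choice of subgradient and one must check that the one-sided derivatives of $g$ interlace correctly with the neighbouring secant slopes. I would handle this by a perturbation argument: replace $x$ by $x_\eps = x + \eps(y-x)$, note $x_\eps \prec y$ still holds and that for $\eps \in (0,1)$ the relevant quantities are either strictly ordered or can be expressed via one-sided derivatives, prove the inequality for $x_\eps$, and let $\eps \downarrow 0$ using continuity of $g$ on the interior of $I$ together with monotone behaviour at endpoints.

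An alternative, slicker route avoids slopes entirely: by the Hardy–Littlewood–Pólya theorem together with Birkhoff's theorem, $x \prec y$ implies $x = \sum_{j} \theta_j P_j y$ with $\theta_j \ge 0$, $\sum_j \theta_j = 1$ and permutation matrices $P_j$, so each $x_i$ is a convex combination of the entries of $y$; Jensen's inequality applied coordinatewise and then summed over $i$ gives $\sum_i g(x_i) \le \sum_i g(y_i)$ at once. I would present the Abel-summation argument as the primary proof, being elementary and self-contained, and mention the doubly-stochastic argument as a remark.
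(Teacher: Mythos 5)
The paper itself gives no proof of this proposition: it is quoted directly from the Marshall--Olkin--Arnold monograph (Proposition~3.C.1, p.~92), so any self-contained argument is necessarily ``different from the paper''. Your primary argument is the classical Abel-summation proof of the Schur/Karamata inequality and its core is correct: after sorting both vectors decreasingly, the secant slopes $c_k$ over $[\min(x_k,y_k),\max(x_k,y_k)]$ are nonincreasing in $k$ by the three-chord lemma, the majorization hypothesis is exactly $U_k\ge 0$ for $k<d$ and $U_d=0$, and summation by parts yields $\sum_k\bigl(g(y_k)-g(x_k)\bigr)=\sum_{k=1}^{d-1}(c_k-c_{k+1})U_k\ge 0$. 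Your alternative route (Hardy--Littlewood--P\'olya characterization $x=Dy$ with $D$ doubly stochastic, Birkhoff's theorem, then Jensen coordinatewise) is also complete and is essentially the argument one finds in the cited monograph; it buys brevity at the cost of invoking two nontrivial external theorems, whereas the Abel-summation proof is elementary and self-contained.

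One caveat on the patch you propose for the tied indices $x_k=y_k$: the perturbation $x_\eps=x+\eps(y-x)$ leaves precisely those coordinates unchanged, since $x_{\eps,k}-y_k=(1-\eps)(x_k-y_k)=0$, so it does not remove the degeneracy it is meant to cure; moreover the appeal to continuity is delicate because a convex $g$ may be discontinuous at an included endpoint of $I$. The issue is genuinely minor and admits a cleaner fix: delete every index with $x_k=y_k$ from both vectors. These indices contribute equally to the two sides of the conclusion, and because the deleted entries are common to $x$ and $y$, the reduced (still similarly sorted) vectors again satisfy $x\prec y$; after this reduction every $c_k$ is an honest secant slope, no subgradient choices are needed, and your chain $c_1\ge\cdots\ge c_d$ together with the Abel summation goes through verbatim. (Alternatively, assign the same subgradient value to tied indices sharing a common point; either repair completes the proof.)
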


For fixed diagonal blocks $\Sigma_1 \in \mathbb{S}^p_\ge$ and $\Sigma_2 \in \mathbb{S}^q_\ge$, does there exist $\Sigma_m \in \Gamma(\Sigma_1, \Sigma_2)$ in~\eqref{eq:Sigma} whose vector of ordered eigenvalues majorises those of all other covariance matrices of that form? 
The answer is positive and this matrix turns out to attain the suprema in the definitions of $\dep_1$ and $\dep_2$ in Definition~\ref{def:depFin}.
The eigendecompositions of $\Sigma_1$ and $\Sigma_2$ are
\begin{equation}
\label{eq:Sigmaj:eigen}
	\Sigma_j = U_j \Lambda_j U_j^\top, \qquad j \in \{1,2\},
\end{equation}
where $\Lambda_1 = \diag(\lambda_{1,1}, \ldots, \lambda_{p,1})$ is the $p \times p$ diagonal matrix containing the $p$ ordered eigenvalues $\lambda_{1,1} \ge \ldots \ge \lambda_{p,1} \ge 0$ of $\Sigma_1$, counting multiplicities, and where the columns of the $p \times p$ orthogonal matrix $U_1$ contain the corresponding eigenvectors.
We set similar notation for the elements arising from the eigenvalue decomposition of $\Sigma_2$.

\begin{thm}[Eigenvalue majorisation given diagonal blocks]
	\label{thm:Sigmam}
	Let $\Sigma_1 \in \mathbb{S}^p_\ge$ and $\Sigma_2 \in \mathbb{S}^q_\ge$ have eigendecompositions~\eqref{eq:Sigmaj:eigen}.
	Let $d = p+q$ and define the $d \times d$ matrix
	\begin{equation}
	\label{eq:Sigma_m}
	\Sigma_m = 	\begin{bmatrix}
	\Sigma_1 & \Psi_m \\ \Psi_m^\top & \Sigma_2
	\end{bmatrix}
	\end{equation}
	with $p \times q$ off-diagonal block
	\begin{equation}
	\label{eq:Psim}
		\Psi_m = U_1 \Lambda_1^{1/2} \Pi \Lambda_2^{1/2} U_2^\top,
	\end{equation}
	where $\Pi \in \reals^{p \times q}$ is the $p \times q$ upper left block of $I_d$.
	The eigenvalues of $\Sigma_m$ are
	\begin{equation}
	\label{eq:lambda_max}
	\lambda(\Sigma_m) = (\lambda_{j,1} + \lambda_{j,2})_{j=1}^d
	\end{equation}
	where $\lambda_{j,1} = 0$ if $j \ge p + 1$ and $\lambda_{j,2} = 0$ if $j \ge q+1$.
	For any $\Sigma \in \Gamma(\Sigma_1,\Sigma_2)$ with eigenvalues $\lambda(\Sigma) = (\lambda_j)_{j=1}^d$, we have
	\begin{equation*}
	\lambda(\Sigma) \prec \lambda(\Sigma_m).
	\end{equation*}
\end{thm}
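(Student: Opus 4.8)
The statement bundles two claims: the eigenvalue identity~\eqref{eq:lambda_max} and the majorisation $\lambda(\Sigma)\prec\lambda(\Sigma_m)$. I would prove them in that order, writing $\lambda_{[1]}(A)\ge\cdots\ge\lambda_{[d]}(A)$ for the eigenvalues of a symmetric $d\times d$ matrix $A$ in decreasing order.

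\emph{Step 1: the spectrum of $\Sigma_m$.} By an orthogonal conjugation of $\Sigma_m$ (by the block matrix with diagonal blocks $U_1^\top,U_2^\top$), which leaves the spectrum unchanged, I may assume $\Sigma_1=\Lambda_1$ and $\Sigma_2=\Lambda_2$, in which case the off-diagonal block of $\Sigma_m$ becomes $\Lambda_1^{1/2}\Pi\Lambda_2^{1/2}$. Since $\Pi$ is the $p\times q$ upper-left corner of $I_d$, this block has $(i,j)$-entry $\sqrt{\lambda_{i,1}\lambda_{j,2}}$ when $i=j\le r:=\min(p,q)$ and $0$ otherwise. Permuting coordinates so as to pair the $i$-th coordinate of the first block with the $i$-th coordinate of the second, $\Sigma_m$ becomes block diagonal: for $i\le r$ a $2\times2$ block $\bigl(\sqrt{\lambda_{i,1}},\sqrt{\lambda_{i,2}}\bigr)^\top\bigl(\sqrt{\lambda_{i,1}},\sqrt{\lambda_{i,2}}\bigr)$, with eigenvalues $0$ and $\lambda_{i,1}+\lambda_{i,2}$, together with the leftover $1\times1$ blocks $\lambda_{i,1}$ (for $r<i\le p$) or $\lambda_{i,2}$ (for $r<i\le q$). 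Listing all these eigenvalues and using the conventions $\lambda_{i,1}=0$ for $i>p$, $\lambda_{i,2}=0$ for $i>q$ yields the multiset $\{\lambda_{i,1}+\lambda_{i,2}:i=1,\dots,d\}$; as $(\lambda_{i,1}+\lambda_{i,2})_i$ is non-increasing and non-negative, this is the sorted eigenvalue vector, which is~\eqref{eq:lambda_max}. The same decomposition shows $\Sigma_m\succeq0$, so $\Sigma_m\in\Gamma(\Sigma_1,\Sigma_2)$.

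\emph{Step 2: the majorisation.} Put $a_k=\sum_{i=1}^{\min(k,p)}\lambda_{i,1}$ and $b_k=\sum_{i=1}^{\min(k,q)}\lambda_{i,2}$, so that $\sum_{i=1}^k\lambda_{[i]}(\Sigma_m)=a_k+b_k$ by Step~1. The case $k=d$ is the trace identity $\trace(\Sigma)=\trace(\Sigma_1)+\trace(\Sigma_2)=\trace(\Sigma_m)$. For $1\le k\le d-1$, let $P$ be the orthogonal projection of $\reals^d$ onto the span of eigenvectors of $\Sigma$ for its $k$ largest eigenvalues, so $\sum_{i=1}^k\lambda_{[i]}(\Sigma)=\trace(\Sigma P)$. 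Writing $\reals^d=\reals^p\oplus\reals^q$ with coordinate projections $\mathrm{pr}_1,\mathrm{pr}_2$, choose subspaces $W_1\subseteq\reals^p$, $W_2\subseteq\reals^q$ with $\mathrm{pr}_j(\mathrm{ran}\,P)\subseteq W_j$, $\dim W_1=\min(k,p)$ and $\dim W_2=\min(k,q)$; this is possible because $\dim\mathrm{pr}_1(\mathrm{ran}\,P)\le\min(k,p)$ and $\dim\mathrm{pr}_2(\mathrm{ran}\,P)\le\min(k,q)$. With $Q_j$ the orthogonal projection onto $W_j$ and $R=Q_1\oplus Q_2$ the orthogonal projection onto $W_1\oplus W_2$, one has $\mathrm{ran}\,P\subseteq W_1\oplus W_2=\mathrm{ran}\,R$, hence $RP=P$, hence $PR=(RP)^\top=P$, hence $R-P$ is symmetric and idempotent, i.e.\ an orthogonal projection, so $R\succeq P$. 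Since $\Sigma\succeq0$, this gives
\[
\sum_{i=1}^k\lambda_{[i]}(\Sigma)=\trace(\Sigma P)\le\trace(\Sigma R)=\trace(\Sigma_1 Q_1)+\trace(\Sigma_2 Q_2)\le a_k+b_k,
\]
the final step being Ky Fan's maximum principle applied to $\Sigma_1$ and $\Sigma_2$ separately. Combining the cases $k=1,\dots,d$ gives $\lambda(\Sigma)\prec\lambda(\Sigma_m)$.

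\emph{Where the work is.} Everything except the passage from $P$ to $R$ is bookkeeping. A more hands-on attempt — estimating $\trace(\Sigma P)=\sum_j v_j^\top\Sigma v_j$ over an orthonormal basis $v_j=(a_j,b_j)$ of $\mathrm{ran}\,P$ via $v_j^\top\Sigma v_j\le\bigl((a_j^\top\Sigma_1 a_j)^{1/2}+(b_j^\top\Sigma_2 b_j)^{1/2}\bigr)^2$, which uses $\Sigma\succeq0$ through $(a_j^\top\Psi b_j)^2\le (a_j^\top\Sigma_1 a_j)(b_j^\top\Sigma_2 b_j)$ — overshoots $a_k+b_k$ by a Cauchy–Schwarz factor on the cross term and therefore fails. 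Enclosing $\mathrm{ran}\,P$ in a block-diagonal subspace $W_1\oplus W_2$ and using monotonicity of $X\mapsto\trace(\Sigma X)$ on the positive semi-definite cone, together with Ky Fan, is what makes the bound tight, with equality precisely at $\Sigma_m$. I would also record that no invertibility of $\Sigma_1$ or $\Sigma_2$ enters anywhere, so degenerate diagonal blocks require no separate treatment.
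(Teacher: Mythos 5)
Your proof is correct, and for the harder half of the statement it takes a genuinely different route from the paper. Step~1 is essentially the paper's argument in different clothing: the paper conjugates by $\operatorname{diag}(U_1,U_2)$ and exhibits the eigenvectors $(\lambda_{j,1}^{1/2}e_{j,p}^\top,\lambda_{j,2}^{1/2}e_{j,q}^\top)^\top$, $(\lambda_{j,2}^{1/2}e_{j,p}^\top,-\lambda_{j,1}^{1/2}e_{j,q}^\top)^\top$ and $(0^\top,e_{j,q}^\top)^\top$ explicitly, which is the same computation as your pairing into $2\times 2$ rank-one blocks plus leftover $1\times 1$ blocks. (You silently read $\Psi_m$ as $U_1\Lambda_1^{1/2}\Pi\Lambda_2^{1/2}U_2^\top$; that is indeed the intended definition --- the displayed formula in the statement, and the matrix $\Lambda_m$ in the paper's proof, both carry a $\Lambda_2$-for-$\Lambda_1$ typo, as the $d=2$ example confirms.) For the majorisation, the paper simply invokes Theorem~1 of Thompson (1972), the eigenvalue inequalities for a partitioned Hermitian matrix in terms of its diagonal blocks, and checks that two specific choices of index sequences give the partial-sum bounds $\sum_{i=1}^k\lambda_i\le\sum_{i\le\min(k,p)}\lambda_{i,1}+\sum_{i\le\min(k,q)}\lambda_{i,2}$. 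You instead derive exactly these bounds from scratch: represent $\sum_{i=1}^k\lambda_{[i]}(\Sigma)$ as $\trace(\Sigma P)$, enlarge the top-$k$ eigenprojection $P$ to a block-diagonal projection $R=Q_1\oplus Q_2$ with $R\succeq P$, use $\Sigma\succeq 0$ to get $\trace(\Sigma P)\le\trace(\Sigma_1Q_1)+\trace(\Sigma_2Q_2)$, and finish with Ky~Fan's maximum principle. Your route is self-contained modulo a much more elementary classical fact, which is a genuine plus; the paper's route is shorter on the page but outsources the real content to a considerably deeper cited result. All the small steps you rely on ($RP=P$ hence $R-P$ is a projection, the cross blocks not contributing to $\trace(\Sigma R)$, monotonicity of the summed eigenvalue vector of $\Sigma_m$, the $k=d$ trace identity, positive semi-definiteness of $\Sigma_m$ so that it lies in $\Gamma(\Sigma_1,\Sigma_2)$) check out, and you are right that no invertibility of $\Sigma_1,\Sigma_2$ is needed. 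The only quibble is the parenthetical ``with equality precisely at $\Sigma_m$'': equality of the partial sums is attained at $\Sigma_m$ but not only there (e.g.\ replacing $\Psi_m$ by $-\Psi_m$ gives another maximiser), and neither the theorem nor your argument establishes uniqueness --- harmless, since nothing in the proof uses it.
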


The matrix $\Sigma_m$ in \eqref{eq:Sigma_m} can be interpreted as the joint covariance matrix of two random vectors having common principal components, yielding cross-covariance matrix $\Psi_m$ in \eqref{eq:Psim}; see Remark~\ref{rmk:PCA}. The matrix $\Sigma_m$ also possesses various extremal properties (Proposition~\ref{prop:max} and Remark~\ref{rmk: vonNeumann}).
	Interchanging $\Sigma_1$ and $\Sigma_2$ leads to a matrix $\Sigma_m$ of the same form, with obvious changes, and with the same eigenvalues in~\eqref{eq:lambda_max}.

\begin{proof}[Proof of Theorem~\ref{thm:Sigmam}]
	We need to show two things: first, the eigenvalues of $\Sigma_m$ are as in Eq.~\eqref{eq:lambda_max} (which implies that $\Sigma_m$ is positive semi-definite) and second, the eigenvalues of any other $\Sigma$ of the form \eqref{eq:Sigma} are majorized by those of $\Sigma_m$.
	For ease of writing, we assume that $p \le q$; otherwise, switch the roles of the two parts in the partition. The matrix $\Pi$ then becomes
	\[
	\Pi = \begin{bmatrix} I_{p} & 0_{p \times (q-p)} \end{bmatrix} \in \reals^{p \times q}.
	\]
	
	First, since $\begin{bmatrix} U_1 & 0 \\ 0 & U_2 \end{bmatrix}$ is orthogonal, the eigenvalues of $\Sigma_m$ are the same as those of
	$
	\Lambda_m = \begin{bmatrix} 
	\Lambda_1 & \Lambda_1^{1/2} \Pi \Lambda_2^{1/2} \\
	\Lambda_2^{1/2} \Pi^\top \Lambda_1^{1/2} & \Lambda_2 
	\end{bmatrix}.
	$
	The eigenvalues and eigenvectors of $\Lambda_m$ can be found explicitly.
	For integer $1 \le r \le s$, let $e_{r,s}$ be the $r$-th canonical unit vector in $\reals^s$. 
	Then:
	\begin{itemize}
		\item 
		For $j = 1,\ldots,p$, the vector 
		$(\lambda_{j,1}^{1/2} e_{j,p}^\top, 
		\lambda_{j,2}^{1/2} e_{j,q}^\top)^\top$ 
		is an eigenvector of $\Lambda_m$ with eigenvalue $\lambda_{j,1} + \lambda_{j,2}$.
		\item
		For $j = 1,\ldots,p$, the vector 
		$(\lambda_{j,2}^{1/2} e_{j,p}^\top, 
		- \lambda_{j,1}^{1/2} e_{j,q}^\top)^\top$ 
		is an eigenvector of $\Lambda_m$ with eigenvalue $0$.
		\item
		For $j = p+1,\ldots,q$, the vector $(0^\top, e_{j,q}^\top)^\top$ is an eigenvector of $\Lambda_m$ with eigenvalue $\lambda_{j,2}$.
	\end{itemize}
	
	Second, let $\lambda_1 \ge \ldots \ge \lambda_d \ge 0$ be the eigenvalues of $\Sigma$. We need to show that
	\begin{align*}
	\sum_{j=1}^k \lambda_j 
	&\le \sum_{j=1}^k (\lambda_{j,1}+\lambda_{j,2}),
	&& k = 1, \ldots, p, \\
	\sum_{j=1}^k \lambda_j 
	&\le p + \sum_{j=1}^k \lambda_{j,2},
	&& k = p+1, \ldots, q.
	\end{align*}
	By Theorem~1 in \citet{thompson1972inequalities}, we have, for any choice of integers
	\[
	1 \le i_1 < \ldots < i_\mu \le p,
	\qquad
	1 \le j_1 < \ldots < j_\nu \le q
	\]
	that 
	\[
	\sum_{s=1}^{\mu+\nu} \lambda_{i_s+j_s-s}
	\le
	\sum_{s=1}^\mu \lambda_{i_s,1} + \sum_{s=1}^\nu \lambda_{j_s,2},
	\]
	where $i_s = p - \mu + s$ for $s > \mu$ and $j_s = q - \nu + s$ for $s > \nu$. Now:
	\begin{itemize}
		\item 
		For $k = 1, \ldots, p$, set $\mu = \nu = k$ and $i_s = j_s = s$ to find the first inequality to be proved.
		\item 
		For $k = p+1, \ldots, q$, set $\mu = p$ with  $i_s = s$ for $s = 1,\ldots,p$ and set $\nu = k$ with $j_s = s$ for $s = 1,\ldots,q$ to find the second inequality to be proved. \qedhere
	\end{itemize}
\end{proof}

\begin{ex}[$d = 2$]
	If $p = q = 1$ and $\Sigma_j = \sigma_j^2$ for $j \in \{1,2\}$, the matrix in~\eqref{eq:Sigma_m} is
	$
	\Sigma_m = \begin{bmatrix} \sigma_1^2 & \sigma_1 \sigma_2 \\ \sigma_1 \sigma_2 & \sigma_2^2 \end{bmatrix}
	$
	with eigenvalues $\sigma_1^2 + \sigma_2^2$ and $0$.
\end{ex}

\begin{ex}[$d = 3$]
	\label{ex: trivariateEquicor}
	If $p = 1$ with $\Sigma_1 = 1$ and $q = 2$ with $\Sigma_2 = \begin{bmatrix} 1 & \rho \\ \rho & 1 \end{bmatrix}$ and $\rho \in [-1, 1]$, then
	\[
	\Sigma_m
	= \begin{bmatrix}
	1 & \sqrt{(1+\abs{\rho})/2} & \sqrt{(1+\abs{\rho})/2} \\
	\sqrt{(1+\abs{\rho})/2} & 1 & \rho \\
	\sqrt{(1+\abs{\rho})/2} & \rho & 1 
	\end{bmatrix},
	\]
	the correlation matrix of $(Z_1, X_2, X_3)$, with $Z_1 = (X_2+\operatorname{sign}(\rho)X_3)/\sqrt{2}$ the first principal component of the couple $(X_2, X_3) \sim \normal_2(0, \Sigma_2)$.
	The ordered eigenvalues of $\Sigma_2$ are $1+|\rho|$ and $1-|\rho|$ and those of $\Sigma_m$ are $2+|\rho|$, $1-|\rho|$ and $0$.
\end{ex}

Among all members of $\Gamma(\Sigma_1, \Sigma_2)$, the matrix~$\Sigma_m$ occupies a special place.
According to the following proposition, it maximises the RV coefficient as well as the $2$-Wasserstein distance with respect to both $\normal_d(0, I_d)$ and $\normal_d(0, \Sigma_0)$ for $\Sigma_0$ in~\eqref{eq:Sigma0}.
Given the constraints on the margins, we think of $\normal_d(0, \Sigma_m)$ as the Gaussian distribution that is ``least random'', ``most structured'', or ``farthest away from independence''.
These claims can be made precise if, as in Remark~\ref{rmk: vonNeumann}, the amount of structure is quantified by the von Neumann entropy.

\begin{prop}[Extremal properties of $\Sigma_m$]	
	\label{prop:max}
	Let $d = p+q$ be integer and let $\Sigma_1 \in \mathbb{S}^p_\ge$ and $\Sigma_2 \in \mathbb{S}^q_\ge$. Among all $\Sigma \in \Gamma(\Sigma_1, \Sigma_2)$ in~\eqref{eq:Sigma}, the matrix $\Sigma_m$ in \eqref{eq:Sigma_m}:
	\begin{enumerate}[(i)]
		\item
		maximizes $d_W(\Sigma, I_d)$;
		\item
		maximizes $d_W(\Sigma, \Sigma_0)$ with $\Sigma_0$ as in~\ref{eq:Sigma0};
		\item
		maximizes $\trace(\Psi \Psi^\top)$ and therefore maximizes the RV coefficient.
	\end{enumerate}
\end{prop}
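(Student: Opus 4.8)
The plan is to derive all three statements from a single source: the eigenvalue majorisation $\lambda(\Sigma) \prec \lambda(\Sigma_m)$, valid for every $\Sigma \in \Gamma(\Sigma_1, \Sigma_2)$ by Theorem~\ref{thm:Sigmam}, combined with the Schur-convexity inequality of Proposition~\ref{prop:Schur}. The mechanism is that, over $\Gamma(\Sigma_1, \Sigma_2)$, the quantities $\trace(\Sigma)$, $\trace(\Sigma_1^2)$ and $\trace(\Sigma_2^2)$ are all constant, so each objective in (i)--(iii) can be rewritten, up to an affine change of variables, as $\sum_{j=1}^{d} g(\lambda_j(\Sigma))$ for a suitable convex $g$ on $[0,\infty)$, whence it is maximised at the majorising eigenvalue vector $\lambda(\Sigma_m)$. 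Since $d_W \ge 0$, it suffices to treat $d_W^2$, for which the closed form \eqref{eq:W2cor} is available.

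For (i), since $(\Sigma^{1/2} I_d \Sigma^{1/2})^{1/2} = \Sigma^{1/2}$, one has $d_W^2(\Sigma, I_d) = \trace(\Sigma) + d - 2\trace(\Sigma^{1/2}) = \sum_{j=1}^{d} g_1(\lambda_j(\Sigma))$ with $g_1(x) = (\sqrt{x} - 1)^2 = x - 2\sqrt{x} + 1$, which is convex on $[0,\infty)$; Proposition~\ref{prop:Schur} then gives $d_W^2(\Sigma, I_d) \le \sum_{j=1}^{d} g_1(\lambda_j(\Sigma_m)) = d_W^2(\Sigma_m, I_d)$. For (iii), expanding the block form yields $\trace(\Sigma^2) = \trace(\Sigma_1^2) + \trace(\Sigma_2^2) + 2\trace(\Psi\Psi^\top)$, so maximising $\trace(\Psi\Psi^\top)$ over $\Gamma(\Sigma_1,\Sigma_2)$ is equivalent to maximising $\trace(\Sigma^2) = \sum_{j=1}^{d} \lambda_j(\Sigma)^2$, which, $x \mapsto x^2$ being convex, is attained at $\Sigma_m$; as the denominator of $\RV$ in \eqref{eq:RV} depends only on $\Sigma_1$ and $\Sigma_2$, the RV coefficient is maximised at $\Sigma_m$ as well.

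Part (ii) requires one genuinely new step. Using $\trace(\Sigma) = \trace(\Sigma_0)$ and the symmetry of $d_W$, formula \eqref{eq:W2cor} gives $d_W^2(\Sigma, \Sigma_0) = 2\trace(\Sigma_0) - 2\trace\bigl((\Sigma_0^{1/2}\Sigma\Sigma_0^{1/2})^{1/2}\bigr)$, so maximising it is the same as minimising $\trace\bigl((\Sigma_0^{1/2}\Sigma\Sigma_0^{1/2})^{1/2}\bigr)$. The observation I would exploit is that $N := \Sigma_0^{1/2}\Sigma\Sigma_0^{1/2}$ is positive semi-definite with diagonal blocks $\Sigma_1^{1/2}\Sigma_1\Sigma_1^{1/2} = \Sigma_1^2$ and $\Sigma_2^{1/2}\Sigma_2\Sigma_2^{1/2} = \Sigma_2^2$, which do not depend on the cross-block $\Psi$; hence $N \in \Gamma(\Sigma_1^2, \Sigma_2^2)$. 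Applying Theorem~\ref{thm:Sigmam} to the pair $(\Sigma_1^2, \Sigma_2^2)$---whose eigendecompositions are $\Sigma_j^2 = U_j \Lambda_j^2 U_j^\top$---produces an extremal matrix $N_m \in \Gamma(\Sigma_1^2, \Sigma_2^2)$ with cross-block $U_1 \Lambda_1 \Pi \Lambda_2 U_2^\top$ such that $\lambda(N) \prec \lambda(N_m)$ for all admissible $N$. A short computation using $\Sigma_j^{1/2} = U_j \Lambda_j^{1/2} U_j^\top$ shows that the cross-block of $\Sigma_0^{1/2}\Sigma_m\Sigma_0^{1/2}$ equals $\Sigma_1^{1/2}\Psi_m\Sigma_2^{1/2} = U_1 \Lambda_1 \Pi \Lambda_2 U_2^\top$ with $\Psi_m$ as in \eqref{eq:Psim}, so $\Sigma_0^{1/2}\Sigma_m\Sigma_0^{1/2} = N_m$. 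Therefore $\lambda(\Sigma_0^{1/2}\Sigma\Sigma_0^{1/2}) \prec \lambda(\Sigma_0^{1/2}\Sigma_m\Sigma_0^{1/2})$ for every $\Sigma \in \Gamma(\Sigma_1,\Sigma_2)$, and since $x \mapsto -\sqrt{x}$ is convex on $[0,\infty)$, Proposition~\ref{prop:Schur} gives $\trace\bigl((\Sigma_0^{1/2}\Sigma\Sigma_0^{1/2})^{1/2}\bigr) \ge \trace\bigl((\Sigma_0^{1/2}\Sigma_m\Sigma_0^{1/2})^{1/2}\bigr)$, i.e.\ $d_W^2(\Sigma, \Sigma_0) \le d_W^2(\Sigma_m, \Sigma_0)$. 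This reasoning never inverts $\Sigma_0$, so it remains valid when $\Sigma_1$ or $\Sigma_2$ is singular.

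I expect the only real obstacle to be the reduction in part (ii): recognising that $d_W^2(\Sigma, \Sigma_0)$ should be rewritten so that the optimisation becomes one over $\trace$ of a matrix square root with \emph{frozen} diagonal blocks---thereby bringing it a second time into the scope of Theorem~\ref{thm:Sigmam}---and then verifying the identity $\Sigma_0^{1/2}\Sigma_m\Sigma_0^{1/2} = N_m$. Parts (i) and (iii) are, by comparison, immediate: once the objective is exhibited as a convex function of the eigenvalues plus a constant, the conclusion is a one-line consequence of majorisation.
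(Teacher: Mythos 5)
Your proposal is correct and follows essentially the same route as the paper's proof: parts (i) and (iii) are the identical reduction to a Schur-convex sum of eigenvalues via Proposition~\ref{prop:Schur} and Theorem~\ref{thm:Sigmam}, and for part (ii) the paper likewise computes the blocks of $\Sigma_0^{1/2}\Sigma\Sigma_0^{1/2}$, observes that the cross-block $\Sigma_1^{1/2}\Psi_m\Sigma_2^{1/2}$ is the extremal cross-block for the pair $(\Sigma_1^2,\Sigma_2^2)$, and invokes Theorem~\ref{thm:Sigmam} with $\Sigma_r$ replaced by $\Sigma_r^2$ before applying convexity of $\kappa\mapsto-\kappa^{1/2}$. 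No gaps; your explicit identification $\Sigma_0^{1/2}\Sigma_m\Sigma_0^{1/2}=N_m$ and the remark that no inversion of $\Sigma_0$ is needed simply make explicit what the paper leaves implicit.
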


As a consequence, the dependence coefficients $\dep_1(\Sigma)$ and $\dep_2(\Sigma)$ are maximal, i.e., equal to $1$, if $\Sigma$ is equal to $\Sigma_m$. See Remark~\ref{rmk:PCA} for a statistical interpretation of this form of dependence in terms of principal components.

\begin{proof}[Proof of Proposition~\ref{prop:max}]
	(i) Recall that $\lambda_1 \ge \ldots \ge \lambda_d \ge 0$ are the eigenvalues of $\Sigma$.
	By Eq.~\eqref{eq:W2cor}, we have
	\[
	W_2^2 \bigl( \normal_d(0, \Sigma), \, \normal_d(0, I_d) \bigr)
	= d + \trace(\Sigma) - 2\trace(\Sigma^{1/2})
	= d + \sum_{j=1}^d \lambda_j - 2\sum_{j=1}^d \lambda_j^{1/2}.
	\]
	Since the function $\lambda \mapsto \lambda -2 \lambda^{1/2}$ is convex on $\lambda \in \reals_{\ge}$, the claim of maximality follows from Proposition~\ref{prop:Schur} and Theorem~\ref{thm:Sigmam}.
	
	(ii) We have
	\[
	\Sigma_0^{1/2} \Sigma \Sigma_0^{1/2}
	=
	\begin{bmatrix} \Sigma_1^{1/2} & 0 \\ 0 & \Sigma_2^{1/2} \end{bmatrix}
	\begin{bmatrix} \Sigma_1 & \Psi \\ \Psi^\top & \Sigma_2 \end{bmatrix}
	\begin{bmatrix} \Sigma_1^{1/2} & 0 \\ 0 & \Sigma_2^{1/2} \end{bmatrix}
	=
	\begin{bmatrix}
	\Sigma_1^2 & \Sigma_1^{1/2} \Psi \Sigma_2^{1/2} \\
	\Sigma_2^{1/2} \Psi^\top \Sigma_1^{1/2} & \Sigma_2^2
	\end{bmatrix}.
	\]

	Recall the eigendecomposition~\eqref{eq:Sigmaj:eigen} of $\Sigma_j$.
	For $r \in \{1, 2\}$ and for $\alpha > 0$, the eigendecomposition of $\Sigma_r^a$ is $U_r \Lambda_r^\alpha U_r$, i.e., the eigenvectors are the same as those of $\Sigma_r$ while the eigenvalues are raised to the exponent $\alpha$.
	For $\Psi_m$ as in Eq.~\eqref{eq:Psim}, we get
	\begin{align*}
	\Sigma_1^{1/2} \Psi_m \Sigma_2^{1/2}
	&=
	\left(U_1 \Lambda_1^{1/2} U_1^\top\right) \, 
	\left(U_1 \Lambda_1^{1/2} \Pi \Lambda_2^{1/2} U_2^\top\right) \,
	\left(U_2 \Lambda_2^{1/2} U_2^\top\right) %
	=
	U_1 \Lambda_1 \Pi \Lambda_2 U_2.
	\end{align*}
	The latter matrix is of the same form as $\Psi_m$ in Eq.~\eqref{eq:Psim} but with $\Lambda_r$ replaced by $\Lambda_r^2$.
	By Theorem~\ref{thm:Sigmam} with $\Sigma_r$ replaced by $\Sigma_r^2$ for $r \in \{1, 2\}$, it follows that of all positive semidefinite $d \times d$ matrices with diagonal blocks $\Sigma_1^2$ and $\Sigma_2^2$, the eigenvalues are majorised by those of the matrix $\Sigma_0^{1/2} \Sigma_m \Sigma_0^{1/2}$.
	In view of Eq.~\eqref{eq:W2cor}, we have
	\[
	W_2^2 \bigl( \normal_d(0, \Sigma), \, \normal_d(0, \Sigma_0) \bigr)
	=
	2\trace{\Sigma} - 2\trace\bigl\{ (\Sigma_0^{1/2} \Sigma \Sigma_0^{1/2})^{1/2} \bigr\}
	=
	2\trace{\Sigma} - 2 \sum_{j=1}^d \kappa_j^{1/2} 
	\]
	with $\kappa_1, \dots, \kappa_d$ the eigenvalues of $\Sigma_0^{1/2} \Sigma \Sigma_0^{1/2}$, counting multiplicities.
	The function $\kappa \mapsto - \kappa^{1/2}$ being convex on $\kappa \in \reals_{\ge}$, the maximality follows from Proposition~\ref{prop:Schur} and Theorem~\ref{thm:Sigmam}.  
	
	(iii) For any rectangular matrix $A$, we have $\trace(A A^\top) = \sum_i \sum_j A_{ij}^2$.
	It follows that
	$
	\trace(\Sigma^2) = \trace(\Sigma_1^2) + \trace(\Sigma_2^2) + 2 \trace(\Psi \Psi^\top).
	$
	Given the diagonal blocks $\Sigma_1$ and $\Sigma_2$, maximising $\trace(\Psi \Psi^\top)$ is thus equivalent to maximising $\trace(\Sigma^2)$.
	As the function $\lambda \mapsto \lambda^2$ is convex, Proposition~\ref{prop:Schur} and Theorem~\ref{thm:Sigmam} imply that $\trace(\Sigma^2) = \sum_{j=1}^d \lambda_j^2$ is maximal for $\Sigma$ equal to $\Sigma_m$.
\end{proof}

In view of Proposition~\ref{prop:max}, we can now work out the dependence coefficients $\dep_1$ and $\dep_2$ in Definition~\ref{def:depFin}.
Let $\Sigma_1 \in \mathbb{S}^p_\ge$ and $\Sigma_2 \in \mathbb{S}^q_\ge$ and let $\Sigma \in \Gamma(\Sigma_1, \Sigma_2)$.
Let $\lambda_1 \ge \ldots \ge \lambda_d \ge 0$ denote the eigenvalues of $\Sigma$, let $\lambda_{1,1} \ge \ldots \ge \lambda_{p,1} \ge 0$ denote those of $\Sigma_1$  and $\lambda_{1,2} \ge \ldots \ge \lambda_{q,2} \ge 0$ those of $\Sigma_2$.

\begin{prop}[Quasi-Gaussian Wasserstein dependence coefficients: computation]
	\label{prop:dep:Gauss}
	Let $\Sigma_1, \Sigma_2, \Sigma$ be as above, with $\Sigma_1$ and $\Sigma_2$ non-zero.
	For $\Sigma_0$ and $\Sigma_m$ as in~\eqref{eq:Sigma0} and~\eqref{eq:Sigma_m}, respectively, we have
	\begin{align*}
		\dep_1(\Sigma)
		&= 
		\frac%
		{\trace(\Sigma_1^{1/2}) + \trace(\Sigma_2^{1/2}) - \trace(\Sigma^{1/2})}%
		{\trace(\Sigma_1^{1/2}) + \trace(\Sigma_2^{1/2}) - \trace(\Sigma_m^{1/2})} %
		=
		\frac%
		{\sum_{j=1}^{p} \lambda_{j,1}^{1/2} + \sum_{j=1}^{q} \lambda_{j,2}^{1/2} - \sum_{j=1}^d \lambda_j^{1/2}}%
		{\sum_{j=1}^{p} \lambda_{j,1}^{1/2} + \sum_{j=1}^{q} \lambda_{j,2}^{1/2} - \sum_{j=1}^{p \vee q} (\lambda_{j,1} + \lambda_{j,2})^{1/2}}, \\
	\intertext{and}
		\dep_2(\Sigma)
		&=
		\frac%
		{\trace(\Sigma) - \trace\{ (\Sigma_0^{1/2} \Sigma \Sigma_0^{1/2})^{1/2} \}}%
		{\trace(\Sigma) - \trace\{ (\Sigma_0^{1/2} \Sigma_m \Sigma_0^{1/2})^{1/2} \}} %
		=
		\frac%
		{\sum_{j=1}^d \lambda_j - \sum_{j=1}^d \kappa_j^{1/2}}%
		{\sum_{j=1}^d \lambda_j - \sum_{j=1}^{p \vee q} (\lambda_{j,1}^2 + \lambda_{j,2}^2)^{1/2}}		
	\end{align*}
	where $\kappa_1 \ge \ldots \ge \kappa_d \ge 0$ denote the eigenvalues of $\Sigma_0^{1/2} \Sigma\Sigma_0^{1/2}$.
\end{prop}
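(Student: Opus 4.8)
The plan is to reduce both coefficients to the explicit Bures–Wasserstein formula~\eqref{eq:W2cor} and then read off the suprema from the extremal matrix $\Sigma_m$ via Proposition~\ref{prop:max}.

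For $\dep_1$: since $(\Sigma^{1/2}I_d\Sigma^{1/2})^{1/2} = \Sigma^{1/2}$, formula~\eqref{eq:W2cor} gives $d_W^2(\Sigma,I_d) = \trace(\Sigma)+d-2\trace(\Sigma^{1/2})$, and likewise for $\Sigma_1$ against $I_p$ and $\Sigma_2$ against $I_q$. Because any $\Sigma\in\Gamma(\Sigma_1,\Sigma_2)$ satisfies $\trace(\Sigma)=\trace(\Sigma_1)+\trace(\Sigma_2)$ and $d=p+q$, all terms linear in the matrices cancel in the numerator of $\dep_1$, leaving
\[
d_W^2(\Sigma,I_d)-d_W^2(\Sigma_1,I_p)-d_W^2(\Sigma_2,I_q) = 2\bigl(\trace(\Sigma_1^{1/2})+\trace(\Sigma_2^{1/2})-\trace(\Sigma^{1/2})\bigr).
\]
By Proposition~\ref{prop:max}(i) the supremum in the denominator is attained at $\Sigma_m$, which also lies in $\Gamma(\Sigma_1,\Sigma_2)$, so the same identity applies with $\Sigma_m$ in place of $\Sigma$; the factor $2$ cancels and the first formula follows. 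For the second formula I would write $\trace(A^{1/2})=\sum_j\lambda_j(A)^{1/2}$ throughout and substitute $\lambda(\Sigma_m)=(\lambda_{j,1}+\lambda_{j,2})_{j=1}^d$ from Theorem~\ref{thm:Sigmam}, noting these eigenvalues vanish for $j>p\vee q$ under the stated convention.

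The coefficient $\dep_2$ is treated the same way. By symmetry of $d_W$, $\trace\{(\Sigma^{1/2}\Sigma_0\Sigma^{1/2})^{1/2}\}=\trace\{(\Sigma_0^{1/2}\Sigma\Sigma_0^{1/2})^{1/2}\}=\sum_j\kappa_j^{1/2}$, and since $\trace(\Sigma_0)=\trace(\Sigma_1)+\trace(\Sigma_2)=\trace(\Sigma)$, formula~\eqref{eq:W2cor} yields $d_W^2(\Sigma,\Sigma_0)=2(\trace(\Sigma)-\sum_j\kappa_j^{1/2})$; the same holds with $\Sigma_m$ in place of $\Sigma$ because $\trace(\Sigma_m)=\trace(\Sigma)$. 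Proposition~\ref{prop:max}(ii) identifies $\Sigma_m$ as the maximiser, so cancelling the factor $2$ gives the first formula for $\dep_2$. For the eigenvalue form I need the spectrum of $\Sigma_0^{1/2}\Sigma_m\Sigma_0^{1/2}$: inserting $\Sigma_0^{1/2}=\diag(\Sigma_1^{1/2},\Sigma_2^{1/2})$ and the explicit $\Psi_m$ from~\eqref{eq:Psim} into~\eqref{eq:Sigma_m}, the eigendecompositions~\eqref{eq:Sigmaj:eigen} give
\[
\Sigma_0^{1/2}\Sigma_m\Sigma_0^{1/2} = \begin{bmatrix} U_1 & 0 \\ 0 & U_2 \end{bmatrix}\begin{bmatrix} \Lambda_1^2 & \Lambda_1\Pi\Lambda_2 \\ \Lambda_2\Pi^\top\Lambda_1 & \Lambda_2^2 \end{bmatrix}\begin{bmatrix} U_1^\top & 0 \\ 0 & U_2^\top \end{bmatrix},
\]
so its spectrum equals that of the middle block matrix. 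As $\Pi$ is the $p\times q$ upper-left block of $I_d$, a simultaneous row/column permutation pairing index $j$ of the first block with index $j$ of the second (for $j\le\min(p,q)$) makes this matrix block-diagonal, with rank-one $2\times2$ blocks $(\lambda_{j,1},\lambda_{j,2})^\top(\lambda_{j,1},\lambda_{j,2})$ — eigenvalues $\lambda_{j,1}^2+\lambda_{j,2}^2$ and $0$ — for $j\le\min(p,q)$, plus leftover $1\times1$ blocks $\lambda_{j,1}^2$ or $\lambda_{j,2}^2$ according to whether $p>q$ or $p<q$. Under the convention $\lambda_{j,1}=0$ for $j>p$ and $\lambda_{j,2}=0$ for $j>q$, the nonzero eigenvalues are exactly $\lambda_{j,1}^2+\lambda_{j,2}^2$ for $j=1,\dots,p\vee q$, whence $\trace\{(\Sigma_0^{1/2}\Sigma_m\Sigma_0^{1/2})^{1/2}\}=\sum_{j=1}^{p\vee q}(\lambda_{j,1}^2+\lambda_{j,2}^2)^{1/2}$ and the last formula follows.

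The only genuinely non-routine step is the last eigenvalue computation: one has to carry out the matrix products to put $\Sigma_0^{1/2}\Sigma_m\Sigma_0^{1/2}$ into the orthogonally block-diagonalisable form above and then recognise the permuted direct-sum structure, keeping the two shapes of $\Pi$ (for $p\le q$ versus $p\ge q$) straight. The rest is bookkeeping with~\eqref{eq:W2cor}, the cancellation of the diagonal-block traces, and a direct appeal to Proposition~\ref{prop:max} for the suprema.
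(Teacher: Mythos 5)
Your proof is correct and follows essentially the same route as the paper: apply the closed-form Bures--Wasserstein expression, cancel traces using $\trace(\Sigma)=\trace(\Sigma_0)=\trace(\Sigma_m)=\trace(\Sigma_1)+\trace(\Sigma_2)$ and $d=p+q$, and identify the suprema at $\Sigma_m$ via Proposition~\ref{prop:max} and Theorem~\ref{thm:Sigmam}. Your explicit block computation of the spectrum of $\Sigma_0^{1/2}\Sigma_m\Sigma_0^{1/2}$ is the same calculation the paper carries out in the proof of Proposition~\ref{prop:max}(ii), where that matrix is recognised as the extremal matrix associated with the diagonal blocks $\Sigma_1^2$ and $\Sigma_2^2$, so Theorem~\ref{thm:Sigmam} delivers the eigenvalues $\lambda_{j,1}^2+\lambda_{j,2}^2$ directly.
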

\begin{proof}[Proof of Proposition~\ref{prop:dep:Gauss}]
	First we calculate $\dep_1(\Sigma)$. By Eq.~\eqref{eq:W2cor}, we have
	$%
	W_2^2\bigl( \normal_{d}(0, \Sigma), \normal_{d}(0, I_d) \bigr)
	=
	d + \trace(\Sigma) - 2\trace(\Sigma^{1/2})
	$. %
	 Apply this result to the three terms in the numerator of~$\dep_1(\Sigma)$ and use the content of 
	Theorem~\ref{thm:Sigmam} for the denominator.
	The claim about $\dep_1(\Sigma)$ follows from direct simplifications, using $d = p + q$ and $\trace(\Sigma) = \trace(\Sigma_m) = \trace(\Sigma_0) = \trace(\Sigma_1) + \trace(\Sigma_2)$.

	The value of $\dep_2(\Sigma)$ is obtained in a similar way.	
\end{proof}

The coefficient $\dep_1(\Sigma)$ depends on $\Sigma$ only through the eigenvalues of $\Sigma_1$, $\Sigma_2$ and $\Sigma$ itself.
The coefficient $\dep_2(\Sigma)$, instead, requires the eigenvalues of $\Sigma_1$, $\Sigma_2$ and $\Sigma_0^{1/2} \Sigma \Sigma_0^{1/2}$.
We will see in the examples and the case study that the values of $\dep_1(\Sigma)$ and $\dep_2(\Sigma)$ are often rather close.
The interpretation of $\dep_2(\Sigma)$ may be more straightforward, comparing $\Sigma$ directly with $\Sigma_0$, but in terms of computations, coefficient~$\dep_1(\Sigma)$ is the simpler one.

\begin{rmk}[Perfectly correlated principal components]
	\label{rmk:PCA}
	The matrix $\Sigma_m$ in Eq.~\eqref{eq:Sigma_m} is the covariance matrix of the random vector 
	\[ 
	\begin{bmatrix} 
	U_1 \Lambda_1^{1/2} Z_1 	\vspace{1mm}
	\\
	 U_2 \Lambda_2^{1/2} Z_2 
	\end{bmatrix}
	\]
	where $Z_1 = (Z_{1,1},\ldots,Z_{1,p})^\top \sim \normal_{p}(0, I_{p})$ and $Z_2 = (Z_{2,1},\ldots,Z_{2,q})^\top \sim \normal_{q}(0, I_{q})$ and where $Z_{k,1} = Z_{k,2}$ for $k$ belonging to the set $ \{1,\ldots,p \wedge q\}$, i.e., $Z_1$ and $Z_2$ have the first $p \wedge q$ components in common.
	If the random vector $(X,Y)$ of dimension $d = p+q$ has covariance matrix $\Sigma_m$, then for $k \in \{1,\ldots,p \wedge q\}$, the $k$-th principal components of $X$ and $Y$ are perfectly correlated.
	Moreover, if $q \le p$ and if the first $q$ eigenvalues of $\Lambda_1$ are positive, we then have $Y = H X$ with $H = U_2 \Lambda_2^{1/2} \Pi' \Lambda_1^{-1/2} U_1'$, with $\Pi$ as in Theorem~\ref{thm:Sigmam} and where $\Lambda_1$ and $U_1$ can be limited to their first $q$ columns.
		Note that in the singular value decomposition of $H$, the first $q$ right-singular vectors are equal to the first $q$ eigenvectors of $\Sigma_1$.
		For general $q \times p$ matrices $A$, however, the equality $Y = AX$ does not imply that our dependence coefficients are equal to one.
	Given the two diagonal blocks, the joint covariance matrix of two such random vectors does not necessarily maximize the Bures--Wasserstein distance to the joint covariance matrix with zero cross-correlations.
\end{rmk}

\begin{rmk}[von Neumann entropy]
	\label{rmk: vonNeumann}
	Among all matrices $\Sigma$ of the form \eqref{eq:Sigma}, the matrix $\Sigma_m$ in Eq.~\eqref{eq:Sigma_m} also minimises the von Neumann entropy \citep[see Eq.~(11)]{petz2001entropy}
	\[
	- \trace(\Sigma \ln\Sigma)
	= - \sum_{j=1}^d \lambda_j \ln \lambda_j
	\]
	with $\lambda \ln \lambda$ to be interpreted as $0$ for $\lambda = 0$, and where the sum is over all $d$ eigenvalues of $\Sigma$, counting multiplicities.
	The property follows from Proposition~\ref{prop:Schur} and Theorem~\ref{thm:Sigmam} since the function $\lambda \mapsto - \lambda \ln \lambda$ is convex.
	The von Neumann entropy is a generalisation of the concept of entropy that turned useful in quantum physics in which the operators of interest are density matrices. The definition strongly resembles the one of the Shannon entropy in information theory where the eigenvalues in the above display are replaced by the probabilities associated to a finite number of events.
\end{rmk}

\begin{rmk}[Adjusted RV coefficient]
	\label{rem:RVadj}
	For $\Psi_m$ as in Eq.~\eqref{eq:Psim}, we have 
	$ 
	\trace(\Psi_m \Psi_m^\top)
	= \trace(\Lambda_1 \Pi \Lambda_2)
	= \sum_{j=1}^{p} \lambda_{j,1} \lambda_{j,2}.
	$
	Given the diagonal blocks $\Sigma_1$ and $\Sigma_2$, this is the maximal value of the numerator in the RV coefficient in Eq.~\eqref{eq:RV}. We therefore propose to adjust the RV coefficient by
	\begin{equation}
	\label{eq:RVadj}
	\overline{\RV}(\Sigma)
	=
	\frac{\RV(\Sigma)}{\RV(\Sigma_m)}
	=
	\frac{\trace(\Psi \Psi^\top)}{\trace(\Psi_m \Psi_m^\top)}.
	\end{equation}
	We have $0 \le \RV \le \overline{\RV} \le 1$, and in contrast to $\RV$, given $\Sigma_1$ and $\Sigma_2$, the adjusted version $\overline{\RV}$ can take on all values between $0$ and $1$.
\end{rmk}

\subsection{Examples}
\label{sec:Gauss:ex}

We compute the dependence coefficients $\dep_1(\Sigma)$ and $\dep_2(\Sigma)$ for $\Sigma$ in some parametric families of correlation matrices. For comparison, we also show the RV coefficient and its adjusted version $\overline{\RV}$ in~\eqref{eq:RVadj}. In these low-dimensional examples, the difference between the RV and the adjusted coefficient remains small. The difference however clearly materializes in higher-dimensional examples as in Figure~\ref{fig:EEG} (Top row) of the supplementary material, for instance.

\begin{ex}[Bivariate correlation matrix]
\label{ex:bivcorr}
Let $p = q = 1$ and for $\rho \in [-1, 1]$ put
\[
	\Sigma = \begin{bmatrix} 1 & \rho \\ \rho & 1 \end{bmatrix}.
\]
From Proposition~\ref{prop:max}, we find
\begin{equation*}
	\dep_1(\Sigma) = \dep_2(\Sigma) 
	= \frac{2 - \sqrt{1+\rho}-\sqrt{1-\rho} }{2-\sqrt{2}}.
\end{equation*}
The RV coefficient and the adjusted version $\overline{\RV}$ in~\eqref{eq:RVadj} are both equal to $\rho^2$ while the coefficient in \citet{puccetti2019measuring} is equal to $\rho$ itself.
In this case, the square of the distance correlation by \citet{szekely2007measuring} is given in their Theorem~7 and reads
$
\{\rho \arcsin(\rho) + (1-\rho^2)^{1/2}- \rho \arcsin(\rho/2) - (4-\rho^2)^{1/2}+1\}/\{1+ \pi/3 - 3^{1/2}\}
$.
These different coefficients are shown in Figure~\ref{fig: BivCompDep} on the left.
\end{ex}

\begin{figure}
\centering
\begin{tikzpicture}[scale=0.5]
\begin{axis}[
    axis lines = left,
    xlabel = {$\rho\quad$ a)},
    ylabel = {},
    legend style={at={(0.5,1)},anchor=north east}
]
\addplot [
    domain=-1:1, 
    samples=200, 
    color=green,
]
{  x^2 };
\addlegendentry{$\RV = \overline{\RV}$}
\addplot [
domain=-1:1, 
samples=200, 
color=magenta,
]
{  (x* asin(x)/180*pi + sqrt(1-x^2)- x* asin(x/2)/180*pi- sqrt(4-x^2)+1 )/(1+pi/3 -sqrt(3) };
\addlegendentry{$\text{distCorr}^2$}
\addplot [
domain=-1:1, 
samples=200, 
color=black,
]
{ (2 - sqrt(1+x)- sqrt(1-x))/(2-sqrt(2)) };
\addlegendentry{$\dep_1=\dep_2$}
\end{axis}
\end{tikzpicture} %
\begin{tikzpicture}[scale=0.5]%
\begin{axis}[
legend columns=2, 
    axis lines = left,
    xlabel = {$\rho\quad$ b)},
    ylabel = {},
    legend style={at={(0.5,1)},anchor=north east}
]
\addplot [
    domain=-.5:1, 
    samples=200, 
    color=green,
]
{  2*x^2/sqrt(2*(1+x^2))};
\addlegendentry{$\RV$ }
\addplot [
domain=-.5:1, 
samples=200, 
color=blue,
]
{ 2*x^2/(1+abs(x)) };
\addlegendentry{$\overline{\RV}$}
\addplot [
domain=-.5:1, 
samples=200, 
color=black,
]
{ 
(1 + sqrt(1+x) - sqrt(1+2 *x) - sqrt(1-x))%
	/(1 + sqrt(1 + abs(x))- sqrt(2+ abs(x))
 };
\addlegendentry{$\dep_1$}
\addplot [
domain=-.5:1, 
samples=200, 
color=red,
]
{ (2 + x - sqrt(  x^2/2 + x + 1 + x*sqrt(x^2 +12*x+12  )/2 ) - sqrt( x^2/2 + x + 1 - x*sqrt(x^2 +12*x+12  )/2) ) 
/(2 + abs(x)- sqrt(2+ 2*abs(x)+x^2)
 };
\addlegendentry{$\dep_2$}
\end{axis}
\end{tikzpicture}
\begin{tikzpicture}[scale=0.5]
\begin{axis}[
legend columns=2, 
    axis lines = left,
    xlabel = {$\rho\quad$ c)},
    ylabel = {},
    legend style={at={(0.5,1)},anchor=north east}
]
\addplot [
    domain=-1:1, 
    samples=200, 
    color=green,
]
{  (x^4+x^2)/sqrt(2*(1+x^2)) };
\addlegendentry{$\RV$ }
\addplot [
domain=-1:1, 
samples=200, 
color=blue,
]
{ (x^4+x^2)/(1+abs(x)) };
\addlegendentry{$\overline{\RV}$}
\addplot [
domain=-1:1, 
samples=200, 
color=black,
]
{ 
(1 + sqrt(1+x) +sqrt(1-x)- sqrt(1-x^2) - sqrt(x^2/2 -x*sqrt(x^2+8)/2 +1 )- sqrt(x^2/2 +x*sqrt(x^2+8)/2 +1 ) )%
	/(1 + sqrt(1 + abs(x))- sqrt(2+ abs(x)))
 };
\addlegendentry{$\dep_1$}
\addplot [
domain=-1:1, 
samples=200, 
color=red,
]
{ ( 3- sqrt(1 - x^2) -
 sqrt((3*x^2)/2 - (5^(1/2)*x*(x^2 + 4)^(1/2))/2 + 1)-
sqrt( (3*x^2)/2 + (5^(1/2)*x*(x^2 + 4)^(1/2))/2 + 1 )) 
/(2 + abs(x)- sqrt(2+ 2*abs(x)+x^2))
 };
\addlegendentry{$\dep_2$}
\end{axis}
\end{tikzpicture}%
\begin{tikzpicture}[scale=0.5] %
\begin{axis}[
legend columns=2, 
    axis lines = left,
    xlabel = {$\rho\quad$  d)},
    ylabel = {},
    ymax=1,
    legend style={at={(0.5,1)},anchor=north east}
]
\addplot [
    domain=-.7:.7, 
    samples=200, 
    color=green,
]
{  x^2/sqrt(2*(1+x^2) };
\addlegendentry{$\RV$ }
\addplot [
domain=-.7:.7, 
samples=200, 
color=blue,
]
{ x^2/(1+abs(x) };
\addlegendentry{$\overline{\RV}$}
\addplot [
domain=-.7:.7, 
samples=200, 
color=black,
]
{ 
( sqrt(1+x) + sqrt(1-x) -sqrt(1+x*sqrt(2))-sqrt(1-x*sqrt(2)) )%
	/(1 + sqrt(1 + abs(x))- sqrt(2+ abs(x)))
 };
\addlegendentry{$\dep_1$}
\addplot [
domain=-.7:.7, 
samples=200, 
color=red,
]
 coordinates {
 (-0.707106781186548,0.404948912692253)     
 (-0.686610932456503,0.285122128037974)     
 (-0.666115083726458,0.244757307658637)     
  (-0.645619234996413,0.216313219422394)     
 (-0.625123386266368,0.193748265964508)     
  (-0.604627537536323,0.174814363085492)     
 (-0.584131688806278,0.158399941939054)     
 (-0.563635840076234,0.143866457764784)     
  (-0.543139991346189,0.130810442094357)     
  (-0.522644142616144,0.118959406874391)     
  (-0.502148293886099,0.108120007103927)     
 (-0.481652445156054,0.0981497310001994)    
  (-0.461156596426009,0.0889403252279246)    
 (-0.440660747695964,0.0804075467923236)    
 (-0.42016489896592,0.0724845417162966)     
 (-0.399669050235875,0.0651174091171502)    
  (-0.37917320150583,0.0582621379405009)     
  (-0.358677352775785,0.0518824367724176)    
  (-0.33818150404574,0.0459481626632798)     
 (-0.317685655315695,0.0404341626227442)    
  (-0.29718980658565,0.0353194063064322)     
  (-0.276693957855606,0.0305863287218994)    
 (-0.256198109125561,0.0262203275402444)    
 (-0.235702260395516,0.0222093764797446)    
  (-0.215206411665471,0.0185437275457643)    
  (-0.194710562935426,0.0152156826632267)    
  (-0.174214714205381,0.0122194206574787)    
  (-0.153718865475336,0.00955086940603093)   
(-0.133223016745292,0.00720761580212766)   
(-0.112727168015247,0.00518884827413066)   
  (-0.0922313192852018,0.00349532821763509)  
 (-0.0717354705551569,0.00212938797535654)  
  (-0.0512396218251121,0.00109495405393153)  
 (-0.0307437730950673,0.000397595178340822) 
 (-0.0102479243650224,4.45956166641614e-05) 
 (0.0102479243650224,4.45956166641614e-05)  
  (0.0307437730950674,0.000397595178341569)  
 (0.0512396218251122,0.00109495405393005)   
  (0.0717354705551571,0.00212938797535507)   
  (0.0922313192852019,0.00349532821763581)   
 (0.112727168015247,0.00518884827413066)    
 (0.133223016745292,0.00720761580212766)    
 (0.153718865475337,0.00955086940603022)    
 (0.174214714205381,0.0122194206574787)     
  (0.194710562935426,0.0152156826632274)     
  (0.215206411665471,0.0185437275457643)     
 (0.235702260395516,0.0222093764797446)     
 (0.256198109125561,0.0262203275402444)     
  (0.276693957855606,0.0305863287218994)     
  (0.297189806585651,0.0353194063064315)     
 (0.317685655315695,0.0404341626227435)     
 (0.33818150404574,0.0459481626632804)      
   (0.358677352775785,0.0518824367724183)     
 (0.37917320150583,0.0582621379405002)      
 (0.399669050235875,0.0651174091171502)     
 (0.42016489896592,0.0724845417162966)      
  (0.440660747695964,0.0804075467923236)     
  (0.461156596426009,0.0889403252279246)     
 (0.481652445156054,0.0981497310001988)     
  (0.502148293886099,0.108120007103928)      
 (0.522644142616144,0.118959406874392)      
 (0.543139991346189,0.130810442094358)      
 (0.563635840076234,0.143866457764784)      
 (0.584131688806279,0.158399941939055)      
  (0.604627537536323,0.174814363085493)      
  (0.625123386266368,0.193748265964507)      
  (0.645619234996413,0.216313219422393)      
  (0.666115083726458,0.244757307658637)      
 (0.686610932456503,0.285122128037974)      
 (0.707106781186548,0.404948912692253)      
 };
\addlegendentry{$\dep_2$}
\end{axis}
\end{tikzpicture}
\caption{\small\slshape Dependence coefficients in various families of correlation matrices. (From left to right) Bivariate correlation matrix, trivariate equicorrelated matrix, trivariate autoregressive model and trivariate moving average model.} 
\label{fig: BivCompDep}
\end{figure}
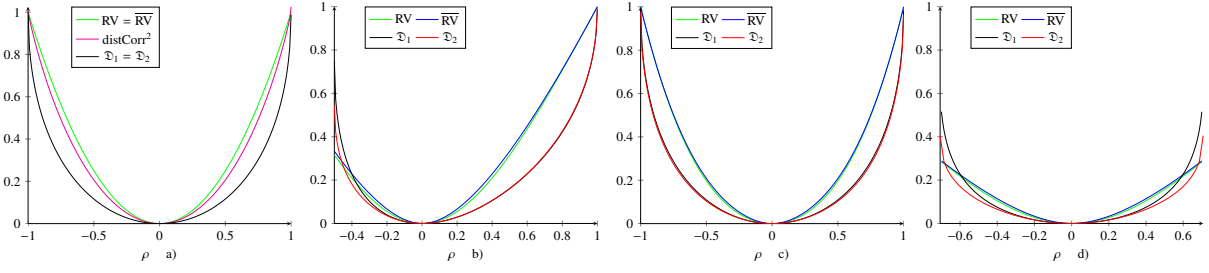

\begin{ex}[Trivariate equicorrelated matrix]
	\label{ex:trivequi}
Let $p = 1$ and $q = 2$ and for $\rho \in [-1/2, 1]$ put
\[
	\Sigma = \begin{bmatrix} 
		1 & \rho & \rho \\ 
		\rho & 1 & \rho \\ 
		\rho & \rho & 1 
	\end{bmatrix}.
\]
The matrix $\Sigma_m$ was calculated in Example~\ref{ex: trivariateEquicor}.
Even though $\dep_1(\Sigma) \ne \dep_2(\Sigma)$ in general, both functions are extremely close in this case for $\rho$ positive, with $\sup_{0 \le \rho \le 1} |\dep_1(\Sigma) - \dep_2(\Sigma)| < 0.005$. The various coefficients are shown in Figure~\ref{fig: BivCompDep} b). Some closed-form formulas used to produce the graphs exist and are deferred to \ref{sec: GausForms}. for space considerations.
\end{ex}

\begin{ex}[Model comparison]
\label{ex:comp}
In this example, we measure the dependence between a univariate random variable and a bivariate vector when the joint structure is either moving average or auto-regressive. The result for the various dependence coefficients is shown in Figure~\ref{fig: BivCompDep}. The graph c) pertains to the auto-regressive structure while the graph d) corresponds to moving averages structure, that is, to the matrices
\begin{equation}
\label{eq:AR-MA}
\begin{bmatrix}
1 &\rho &\rho^2\\
\rho & 1 & \rho \\
\rho^2 & \rho & 1
\end{bmatrix} \quad \text{for $-1 \le \rho \le 1$} \quad \text{and} \quad
\begin{bmatrix}
1 &\rho &0\\
\rho & 1 & \rho \\
0 & \rho & 1
\end{bmatrix} \quad \text{for $-\frac{1}{\sqrt{2}} \le \rho \le \frac{1}{\sqrt{2}}$},
\end{equation}
respectively. The corresponding formulas are again deferred to the supplementary material, \ref{sec: GausForms}.
\end{ex}

\subsection{G-copulas}
\label{sec: G-copulas}

For a random vector $(X, Y)$ in dimension $d = p+q$, the dependence coefficients $\dep_1$ and $\dep_2$ were defined in terms of its joint covariance matrix $\Sigma$.
As already mentioned, one may first want to rescale the variables and define the coefficients in terms of the joint correlation matrix instead.
A more radical standardisation is to transform the univariate margins to a common distribution with finite second moment.
This can be achieved by a combination of the probability and quantile transforms, provided the margins are continuous, i.e., do not have atoms.
The advantage of such an approach is that the dependence coefficients become invariant with respect to component-wise monotone increasing or decreasing transformations.
Also, on the original scale, the distribution is no longer subject to any moment conditions.

In view of the coefficients' origin in the Wasserstein distance between Gaussian distributions, a natural choice for the standardisation target is the standard normal distribution.
We call the resulting multivariate distribution a G-copula, as an alternative to classical copulas, whose margins are uniform on the unit interval.
The idea is not new: in the context of copula density estimation, \citet{geenens2017probit} also prefer the standard normal distribution as pivot.

Let $\Phi$ denote the standard normal cumulative distribution function (cdf) and let $\Phi^{-1} : [0, 1] \to [-\infty, \infty]$ denote its inverse.
A G-copula is simply a multivariate cdf with standard normal margins.
By a trivial extension of Sklar's theorem, every multivariate cdf $F$ with univariate margins $F^{(1)}, \ldots, F^{(d)}$, admits a G-copula $G$ such that
\[
	F(z)
	=
	G\bigl( \Phi^{-1} \circ F^{(1)}(z^{(1)}), \ldots, \Phi^{-1} \circ F^{(d)}(z^{(d)}) \bigr),
	\qquad z = (z^{(1)}, \ldots, z^{(d)}) \in \Rd.
\]
If the margins $F^{(1)}, \ldots, F^{(d)}$ are continuous, the G-copula $G$ in the above identity is unique and is equal to the cdf of
\[
	Z_G =
	\bigl(
	\Phi^{-1} \circ F^{(1)}(Z^{(1)}), \ldots, 
	\Phi^{-1} \circ F^{(d)}(Z^{(d)}) 
	\bigr),
\]
where the random vector $Z$ has cdf $F$.
The entries of the correlation matrix $\Sigma_G$ of $Z_G$ are called normal correlation coefficients in \citet{klaassen1997efficient}.
They are the population versions of the normal scores rank correlation coefficients.
The (ordinary) copula of $Z$ is equal to the one of a Gaussian distribution with correlation matrix $\Sigma_G$ if and only if the $G$-copula of $Z$ is equal to $\normal_d(0, \Sigma_G)$.

Given a random vector $Z = (X, Y)$ of dimension $d = p+q$ with continuous margins, we can now apply the dependence coefficients $\dep_1$ and $\dep_2$ to the random vector $Z_G = (X_G, Y_G)$ with standard normal margins obtained by the above operation. We obtain
\[
	\dep_{G,r}(X, Y) = \dep_r(X_G, Y_G) = \dep_r(\Sigma_G; p, q),
\]
where $\Sigma_G$ is the correlation matrix of the random vector $(X_G, Y_G)$.
Estimating $\Sigma_G$ by the matrix of normal scores rank correlation coefficients yields a non-parametric rank-based estimator of $\dep_{G,r}(X, Y)$.
In Section~\ref{sec:estim}, we study the asymptotic distribution of this estimator in case the copula of $(X, Y)$ is Gaussian.

\section{Estimation of quasi-Gaussian Wasserstein dependence coefficients}
\label{sec:estim}
In this section, we propose plug-in estimators for the Wasserstein-based dependence coefficients (Section~\ref{sec:estim:plugin}) and establish their limiting distributions, which is paramount for inferential purposes (Section~\ref{sec:estim:asy}). 
Before obtaining the latter results, we establish in Section~\ref{sec: FrechDiff} the Fréchet differentiability of the maps $\Sigma \mapsto \dep_r(\Sigma;p,q)$ for $r \in \{1,2\}$ in Definition~\ref{def:depFin}, where $\Sigma$ must satisfy some conditions.
The latter result opens the door to the application of our coefficients in many contexts.

\subsection{Estimators}
\label{sec:estim:plugin}

The dependence coefficients $\dep_r(\Sigma;p, q)$ for $r \in \{1, 2\}$ can be studied in any setting where a covariance or correlation matrix $\Sigma$ shows up.
The coefficient is zero if and only if $\Sigma = \Sigma_0$ in~\eqref{eq:Sigma0}. This identity implies independence provided $\Sigma$ is the covariance or correlation matrix of a Gaussian distribution.
The latter may be the distribution of the observations themselves or, as in Section~\ref{sec: G-copulas}, it may be their G-copula.
Still, the coefficients can be used in non-Gaussian settings too, in the same way as a principal component analysis can be applied to any covariance or correlation matrix. 

Recalling that $\dep_r$ for $r \in \{1,2\}$ is a function from a subset of the $d \times d$ symmetric positive semi-definite matrices to $[0,1]$, a natural way to estimate the coefficients is to consider a \emph{plug-in} estimator. 
If $\hat \Sigma_n$ is an estimator of the covariance or correlation matrix $\Sigma$ of interest, we set 
\begin{equation}
\label{eq:plug-in}
	\hat{\dep}_{n,r} = \dep_r(\hat \Sigma_n). 
\end{equation}
 
An important point to highlight at this stage is the generality of the approach. 
The matrix $\hat{\Sigma}_n$ could be the empirical covariance or correlation matrix or, in case of a G-copula, the one of normal scores rank correlation coefficients. 
Constrained covariance matrices could be used for factor models, graphical models etc.
In higher dimensions and depending on the context, one could employ a variety of regularization techniques, such as enforcing sparsity of the precision matrix or shrinking the eigenvalues.
The impact of the latter will be investigated numerically in \ref{sec:simu}.

Before stating the results, let us give an overview of how estimation of and inference on the dependence coefficients can be carried out in practice.
\begin{compactenum}
\item Estimate a covariance matrix and calculate the plug-in point estimate in Equation~\eqref{eq:plug-in}.
\item Compute the quantities appearing in Theorems~\ref{thm: DiffDep1} and \ref{thm: DiffDep2} for coefficients $\dep_1$ and $\dep_2$ respectively, based on the estimated covariance matrix.
\item Insert the latter quantities in Equation~\eqref{eq:zetar2} to estimate the asymptotic variances in the Gaussian (copula) case.
\item Construct confidence intervals and perform hypotheses tests based on the the normal approximation (Theorem~\ref{thm: LimitDep}) using the estimated variances.
\end{compactenum}
\subsection{Fréchet differentiability} 
\label{sec: FrechDiff}

First, we will prove the Fréchet differentiability of the maps $\Sigma \mapsto \dep_r(\Sigma; p, q)$ with $r \in \{1, 2\}$ for $\Sigma$ a positive definite symmetric matrix.
To this end, we will need an assumption on the diagonal blocks $\Sigma_1$ and $\Sigma_2$:
we require that $\Sigma_1$ has $p$ distinct non-zero eigenvalues and that $\Sigma_2$ has $q$ distinct non-zero eigenvalues. 
Otherwise, the functionals are still compactly (Hadamard) differentiable, but the derivatives are no longer linear and the asymptotic distribution of the plug-in estimator~\eqref{eq:plug-in} is no longer Gaussian.
The phenomenon is caused by the denominator in the definition of the coefficients, which relies on the ordering of the eigenvalues. The issue is visible in Example~\ref{ex:trivequi} at $\rho = 0$.

As $\mathbb{S}^d$, the space of symmetric real $d \times d$ matrices, is isomorph to a linear subspace of $\reals^{d^2}$, any linear map $\mathbb{S}^d \to \reals$ can be written as a trace inner product of the form
\begin{equation}
\label{eq:trace}
	H \mapsto \trace(MH) = \sum_{i=1}^d \sum_{j=1}^d M_{ij} H_{ij}
\end{equation}
for some $M \in \mathbb{S}^d$.
Fréchet derivatives being linear maps, we will write them in the above form.
The main challenge will thus be to identify the matrices $M_r$ in the limits
\begin{equation}
\label{eq:trace:form}
	\lim_{t \downarrow 0} t^{-1} \bigl( \dep_r(\Sigma + t H_t) - \dep_r(\Sigma) \bigr)
	= \trace(M_r H)
\end{equation}
for $r \in \{1, 2\}$, where $H_t, H \in \mathbb{S}^d$ and $H_t \to H$ element-wise as $t \downarrow 0$.
We will assume that $\Sigma$ is positive definite, and then $\Sigma + t H_t$ will be so too for $t$ sufficiently close to zero.

We introduce some notation.
Recall that $\mathbb{S}^m_>$ denotes the set of symmetric positive definite real $m \times m$ matrices.
Fix positive integer $d = p+q$. 
Let $\Sigma_1 \in \mathbb{S}^p_>$ and $\Sigma_2 \in \mathbb{S}^q_>$ and let $\Sigma \in \Gamma(\Sigma_1, \Sigma_2)$ as in~\eqref{eq:Sigma} and $\Sigma_0$ as in \eqref{eq:Sigma0}.
The eigendecompositions $\Sigma_r = U_r \Lambda_r U_r^\top$ for $r \in \{1,2\}$  in~\eqref{eq:Sigmaj:eigen} allow us to define the matrix $\Sigma_m$ in~\eqref{eq:Sigma_m}.
Let $\Pi_{1}$ be the projection matrix onto the first $p$ coordinates and $\Pi_{2}$ the one onto the last $q$ coordinates, that is,
\begin{align}
\label{eq:Pi12}
	\Pi_1 &= \begin{bmatrix} I_{p} & 0 \end{bmatrix} \in \reals^{p \times d}, &
	\Pi_2 &= \begin{bmatrix} 0 & I_{q} \end{bmatrix} \in \reals^{q \times d}.
\end{align}
Note that $\Sigma_j = \Pi_j \Sigma \Pi_j^\top$ for $j \in \{1, 2\}$.
Assume $q \ge p$ (otherwise, switch the roles of $p$ and $q$) and partition the second eigenvalue matrix $\Lambda_2 \in \mathbb{S}^q_>$ as
\begin{equation}
\label{eq:Lambda212}
	\Lambda_2 = 
	\begin{bmatrix} \Lambda_{2,1} & 0 \\ 0 & \Lambda_{2,2} \end{bmatrix}
\end{equation}
with $\Lambda_{2,1} \in \mathbb{S}^p_>$ containing the first $p$ eigenvalues and $\Lambda_{2,2} \in \mathbb{S}^{q-p}_>$ the remaining $q - p$ ones, the second block being empty if $q = p$. Finally, define
\begin{align}
\label{eq:Deltar}
	\Delta_1 &= 
	(\Lambda_1 + \Lambda_{2,1})^{-1/2}, &
	\Delta_2 &= 
	\begin{bmatrix} 
		\Delta_1 & 0 \\
		0 & \Lambda_{2,2}^{-1/2}
	\end{bmatrix}.
\end{align}
We can now state the differentiability of $\dep_1$ and 
$\dep_2$ with derivatives in the form~\eqref{eq:trace:form}.
The meaning of the constants and matrices in the formulas is explained in Remark~\ref{rmk:matcon}.

\begin{thm}[Differentiability of $\dep_1$]
\label{thm: DiffDep1}
Consider the set-up in the previous paragraph.
Assume that $\Sigma \in \mathbb{S}^d_>$, that $\Sigma_1$ has $p$ distinct eigenvalues and $\Sigma_2$ has $q$ distinct eigenvalues.
Let $H_t \in \mathbb{S}^d$ for $t > 0$ and $H \in \mathbb{S}^d$ be such that $H_t \to H$ element-wise as $t \downarrow 0$.
Then
\begin{align*}
	\lim_{t\to 0} 
	t^{-1} \bigl( \dep_1(\Sigma+tH_t) - \dep_1(\Sigma) \bigr)
	= \trace(M_1H)
\end{align*}
with
\begin{align}
\nonumber
	M_1 &= 
	\frac{1}{2c_1} \left( 
		- \Sigma^{-1/2} 
		+ \bigl(1 - \dep_1(\Sigma)\bigr) \Sigma_0^{-1/2} 
		+ \dep_1(\Sigma) \Upsilon_1 
	\right), \\
\nonumber
	c_1 &= 
	\trace(\Sigma_1^{1/2}) + \trace(\Sigma_2^{1/2}) - \trace(\Sigma_m^{1/2}), \\
\label{eq:Upsilon1}
	\Upsilon_1 &= 
	\begin{bmatrix} 
		U_1 \Delta_1 U_1^\top & 0 \\ 
		0 & U_2 \Delta_2 U_2^\top 
	\end{bmatrix}.
\end{align}
\end{thm}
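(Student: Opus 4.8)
The plan is to express $\dep_1$, near $\Sigma$, as a quotient of maps each built from the elementary operation $A \mapsto \trace(A^{1/2})$ (applied to $\Sigma$, to its diagonal blocks, and, via Theorem~\ref{thm:Sigmam}, to $\Sigma_m$), to compute the derivative by the chain and quotient rules, and then to deduce the stated limit along an arbitrary perturbation $H_t \to H$ from Fréchet differentiability. The basic building block is the classical fact that $A \mapsto \trace(A^{1/2})$ is continuously differentiable on $\mathbb{S}^d_>$ with $D\trace(A^{1/2})[H] = \tfrac12 \trace(A^{-1/2} H)$; this is the same ingredient that underlies the Fréchet differentiability of the Bures--Wasserstein distance (cf.\ \citet{dowson1982frechet, rippl2016limit}), and it can be obtained by differentiating $\sum_i \lambda_i(A)^{1/2}$ through analytic eigenvalue perturbation, or from the Sylvester-equation characterisation of $D(A^{1/2})$. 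Because $\Sigma \in \mathbb{S}^d_>$, its diagonal blocks $\Sigma_j = \Pi_j \Sigma \Pi_j^\top$ are positive definite, and $\Sigma + t H_t$ together with its diagonal blocks stays positive definite, and with simple spectra, for $t$ small, since these are open conditions and $H_t$ is bounded; all formulas below therefore remain valid along the perturbed path.

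The numerator $N(\Sigma) := \trace(\Sigma_1^{1/2}) + \trace(\Sigma_2^{1/2}) - \trace(\Sigma^{1/2})$ is handled by the chain rule through $\Sigma_j = \Pi_j \Sigma \Pi_j^\top$: using the identity $\Pi_1^\top \Sigma_1^{-1/2} \Pi_1 + \Pi_2^\top \Sigma_2^{-1/2} \Pi_2 = \Sigma_0^{-1/2}$ one obtains $DN(\Sigma)[H] = \tfrac12 \trace\bigl( (\Sigma_0^{-1/2} - \Sigma^{-1/2}) H \bigr)$.

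The crux is the denominator, specifically $\trace(\Sigma_m^{1/2})$. By Theorem~\ref{thm:Sigmam}, $\trace(\Sigma_m^{1/2}) = \sum_{j=1}^{p \vee q} (\lambda_{j,1} + \lambda_{j,2})^{1/2}$, which depends on $\Sigma$ only through the ordered eigenvalues of $\Sigma_1$ and of $\Sigma_2$. This is exactly where the distinctness hypothesis enters: when $\Sigma_1$ has $p$ distinct (hence positive) eigenvalues, each ordered eigenvalue $\lambda_{j,1}$ is locally an analytic function of $\Sigma_1$ with first-order perturbation $D\lambda_{j,1}[\cdot] = u_{j,1}^\top (\cdot) u_{j,1}$ for the associated unit eigenvector $u_{j,1}$, and similarly for $\Sigma_2$; without distinctness the ordered eigenvalues are merely Lipschitz and the derivative is no longer linear. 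Composing with $\Sigma_j = \Pi_j \Sigma \Pi_j^\top$, collecting the contributions at matched indices $j \le p$, and recognising $\sum_{j=1}^p (\lambda_{j,1}+\lambda_{j,2})^{-1/2} u_{j,1} u_{j,1}^\top = U_1 \Delta_1 U_1^\top$ together with the analogous identity on the $U_2$ side (which produces $\Delta_2$ once the first $p$ and the last $q-p$ indices are separated) yields, after embedding via $\Pi_1^\top$ and $\Pi_2^\top$, precisely the block matrix $\Upsilon_1$ in~\eqref{eq:Upsilon1} and $D\trace(\Sigma_m^{1/2})[H] = \tfrac12 \trace(\Upsilon_1 H)$. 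Hence $Dc_1(\Sigma)[H] = \tfrac12 \trace\bigl( (\Sigma_0^{-1/2} - \Upsilon_1) H \bigr)$.

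Finally, $c_1 > 0$ by strict subadditivity of the square root (as $\Sigma_1, \Sigma_2 \ne 0$), so the quotient rule applies at $\Sigma$ and gives $D\dep_1(\Sigma)[H] = c_1^{-1}\bigl( DN(\Sigma)[H] - \dep_1(\Sigma) \, Dc_1(\Sigma)[H] \bigr)$; substituting the two expressions above and combining the trace terms yields $\trace(M_1 H)$ with $M_1 = \tfrac{1}{2c_1}\bigl( -\Sigma^{-1/2} + (1 - \dep_1(\Sigma)) \Sigma_0^{-1/2} + \dep_1(\Sigma) \Upsilon_1 \bigr)$. Since $\dep_1$ is thus Fréchet differentiable at $\Sigma$, the chain $t^{-1}(\dep_1(\Sigma + tH_t) - \dep_1(\Sigma)) = \trace(M_1 H_t) + o(1) \to \trace(M_1 H)$ follows from $H_t \to H$ and continuity of the linear map $H \mapsto \trace(M_1 H)$. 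The main obstacle I anticipate is the bookkeeping in the eigenvalue-perturbation step: correctly embedding the $p$- and $q$-dimensional eigenprojections into $\reals^d$ via $\Pi_1^\top$ and $\Pi_2^\top$, matching the $j \le p$ contributions coming from $\Sigma_1$ and from $\Sigma_2$, and separately handling the cases $q = p$ and $q > p$ (the block $\Lambda_{2,2}$, equivalently the lower-right block of $\Delta_2$); one must also confirm, as noted above, that the distinctness of the spectra of $\Sigma_1$ and $\Sigma_2$ persists under the perturbation so that the derivative formulas hold along the whole path.
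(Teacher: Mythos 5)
Your proposal is correct and follows essentially the same route as the paper: express $\dep_1$ through the traces $\trace(\Sigma^{1/2})$, $\trace(\Sigma_1^{1/2})$, $\trace(\Sigma_2^{1/2})$, $\trace(\Sigma_m^{1/2})$, differentiate the first three via $D\trace(A^{1/2})[H]=\tfrac12\trace(A^{-1/2}H)$ (the paper's Lemma~\ref{lem:sqrt:deriv}), handle $\trace(\Sigma_m^{1/2})$ by differentiating the ordered eigenvalues of the diagonal blocks under the distinctness assumption to obtain $\tfrac12\trace(\Upsilon_1 H)$ (the paper's Lemmas~\ref{lem: DifEig} and~\ref{lem: D1Denom}), and assemble everything by the chain/quotient rule. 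Your quotient-rule bookkeeping is just a repackaging of the paper's partial derivatives of $f(\bar x,\bar y,\bar z,\bar w)$ and yields the same matrix $M_1$.
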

\begin{proof}[Proof of Theorem \ref{thm: DiffDep1}]
	Note that for $t$ close enough to zero, $\Sigma + t H_t$ is positive definite since $\Sigma$ is so and since $\Sigma + t H_t \to \Sigma$ element-wise as $t \downarrow 0$.
	Consider the function
	\[
		f(\bar{x}, \bar{y}, \bar{z}, \bar{w}) 
		= \frac{\bar{y} + \bar{z} - \bar{x}}{\bar{y} + \bar{z} - \bar{w}}.
	\]
	We have $\dep_1(\Sigma) = f(x, y, z, w)$ and $\dep_1(\Sigma + t H_t) = f(x_t, y_t, z_t, w_t)$ where
	\begin{align*}
		x &= \trace(\Sigma^{1/2}), &
		y &= \trace(\Sigma_1^{1/2}), &
		z &= \trace(\Sigma_2^{1/2}), &
		w &= \trace(\Sigma_m^{1/2}),
	\end{align*}
	and similarly
	\begin{align*}
		x_t &= \trace\bigl( (\Sigma + t H_t)^{1/2} \bigr), &
		y_t &= \trace\bigl( (\Sigma + t H_t)_1^{1/2} \bigr), &
		z_t &= \trace\bigl( (\Sigma + t H_t)_2^{1/2} \bigr), &
		w_t &= \trace\bigl( (\Sigma + t H_t)_m^{1/2} \bigr).
	\end{align*}
	Here, $(\Sigma + t H_t)_1$ and $(\Sigma + t H_t)_2$ are the upper $p \times p$ and lower $q \times q$ diagonal blocks of $\Sigma + t H_t$, respectively, while $(\Sigma + t H_t)_m$ is the matrix in \eqref{eq:Sigma_m} with $\Sigma$ replaced by $\Sigma + t H_t$.
	
	Provided the quantities $(x_t - x)/t$ and so on converge, we have
	\[
		\frac{\dep_1(\Sigma + t H_t) - \dep_1(\Sigma)}{t}
		=
		\dot{f}_x \frac{x_t-x}{t} + 
		\dot{f}_y \frac{y_t-y}{t} +
		\dot{f}_z \frac{z_t-z}{t} + 
		\dot{f}_w \frac{w_t-w}{t} + \oh(1),
		\qquad t \downarrow 0,	
	\]
	where $\dot{f}_x$ and so on are the partial derivatives of $f$ evaluated at $(x, y, z, w)$. Using the notation $c_1 = y+z-w$, straightforward computation gives
	\begin{align*}
	\dot{f}_x &= - \frac{1}{c_1}, &
	\dot{f}_y = \dot{f}_z &= \frac{1 - \dep_1(\Sigma)}{c_1}, &
	\dot{f}_w &= \frac{\dep_1(\Sigma)}{c_1}.
	\end{align*}
	It follows that, as $t \downarrow 0$ and provided $(x_t-x)/t$ and so on converge,
	\begin{equation}
	\label{eq: Interm}
	\frac{\dep_1(\Sigma + tH_t) - \dep_1(\Sigma)}{t} \\
	=
	\frac{1}{c_1}
	\left(
		- \frac{x_t-x}{t} 
		+ (1 - \dep_1(\Sigma)) \frac{y_t - y + z_t - z}{t}
		+ \dep_1(\Sigma) \frac{w_t-w}{t}
	\right)
	+ \oh(1).
	\end{equation}
	Let $H_{11}$ and $H_{22}$ be the upper $p \times p$ and lower $q \times q$ diagonal blocks of $H$.
	By~\eqref{eq:Tdiff:simple},
	\begin{equation}
	\label{eq:Qt}
		\lim_{t \downarrow 0} \frac{x_t-x}{t} 
		= \frac{1}{2} \trace(\Sigma^{-1/2} H),
	\end{equation}
	as well as
	\begin{align}
         \label{eq:Qt12}
	\lim_{t \downarrow 0} \frac{y_t - y + z_t - z}{t}
	 &= \frac{1}{2} \trace(\Sigma_1^{-1/2} H_{11}) + 
	 \frac{1}{2} \trace(\Sigma_2^{-1/2} H_{22}) %
	 = \frac{1}{2} \trace( \Sigma_0^{-1/2} H ).
	\end{align}
	Lemma~\ref{lem: D1Denom} further yields
	\begin{equation}
	\label{eq:Qtm}
		\lim_{t \downarrow 0} \frac{z_t-z}{t} = \frac{1}{2} \trace(\Upsilon_1 H).
	\end{equation}
	Combine equations~\eqref{eq: Interm}, \eqref{eq:Qt}, \eqref{eq:Qt12} and~\eqref{eq:Qtm} to see that
	\begin{equation*}
		\lim_{t \downarrow 0} 
		\frac{\dep_1(\Sigma + tH_t) - \dep_1(\Sigma)}{t} \\
		=
		\frac{1}{2c_1} \left(
		- \trace(\Sigma^{-1/2} H)
		+ (1 - \dep_1(\Sigma)) \trace(\Sigma_0^{-1/2} H) 
		+ \dep_1(\Sigma) \trace( \Upsilon_1 H )
		\right).
	\end{equation*}
	The claim follows by the linearity of the trace operator followed by isolating $H$.
\end{proof}

To state the Fréchet differentiability of $\dep_2$, we need some additional notation.
Recall the eigendecompositions~\eqref{eq:Sigmaj:eigen} of $\Sigma_1$ and $\Sigma_2$ and recall the partitioning of $\Lambda_2$ in~\eqref{eq:Lambda212}.
Similar to~\eqref{eq:Deltar}, define
\begin{align*}
\Delta_1' &= 
(\Lambda_1^2 + \Lambda_{2,1}^2)^{-1/2}\Lambda_1, &
\Delta_2' &= 
\begin{bmatrix} 
(\Lambda_1^2 + \Lambda_{2,1}^2)^{-1/2}  \Lambda_{2,1}& 0 \\
0 &I_{q-p} 
\end{bmatrix},
\end{align*}
the second diagonal block of $\Delta_2'$ being empty if $q = p$.
Consider the $d \times d$ matrices
\begin{align}
\label{eq:J:Sigma}
	J &= \Sigma_0^{-1/2} \bigl( \Sigma_0^{1/2} \Sigma \Sigma_0^{1/2} \bigr)^{1/2} \Sigma_0^{-1/2} 
	= \begin{bmatrix} J_{11} & J_{12} \\ J_{21} & J_{22} \end{bmatrix}, \\
\label{eq:J0:Sigma}
	J_0 &= \begin{bmatrix} J_{11} & 0 \\ 0 & J_{22} \end{bmatrix},
\end{align}
the dimensions of the two diagonal blocks $J_{11}$ and $J_{22}$ being $p \times p$ and $q \times q$, respectively.

\begin{thm}[Differentiability of $\dep_2$]
\label{thm: DiffDep2}
	Under the same assumptions as in Theorem~\ref{thm: DiffDep1}, we have
	\begin{align*}
		\lim_{t\to 0} 
		 t^{-1} \bigl( \dep_2(\Sigma+tH_t) - \dep_2(\Sigma) \bigr)
		= \trace(M_2 H)
	\end{align*}
	where
	\begin{align}
	\nonumber
		M_2 &= \frac{1}{c_2} \left(
		- \frac{1}{2} (J_0 + J^{-1})
		+ (1 - \dep_2(\Sigma)) I_d
		+ \dep_2(\Sigma) \Upsilon_2
		\right), \\
	\nonumber
		c_2 &= \trace(\Sigma)- \trace \left(
		\bigl(\Sigma_0^{1/2}\Sigma_m\Sigma_0^{1/2}\bigr)^{1/2} 
		\right), \\
	\label{eq:Upsilon2}
		\Upsilon_2 &= \begin{bmatrix} 
		U_1 \Delta_1' U_1^\top & 0 \\ 
		0 & U_2 \Delta_2' U_2^\top 
		\end{bmatrix}.
	\end{align}
\end{thm}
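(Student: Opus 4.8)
The plan is to differentiate the explicit formula for $\dep_2(\Sigma)$ from Proposition~\ref{prop:dep:Gauss},
\[
	\dep_2(\Sigma) = \frac{\trace(\Sigma) - \trace\bigl((\Sigma_0^{1/2}\Sigma\Sigma_0^{1/2})^{1/2}\bigr)}{c_2},
\]
noting that both $\Sigma_0 = \Sigma_0(\Sigma)$ (it reads off the diagonal blocks of $\Sigma$) and the numerator depend on $\Sigma$, while the denominator $c_2$ depends on $\Sigma$ only through $\Sigma_1,\Sigma_2$, hence through the diagonal blocks. I would first handle the numerator $N(\Sigma) = \trace(\Sigma) - \trace\bigl((\Sigma_0^{1/2}\Sigma\Sigma_0^{1/2})^{1/2}\bigr)$ and then, separately, the denominator; the quotient rule then assembles $M_2$. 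The term $\trace(\Sigma+tH_t)=\trace(\Sigma)+t\,\trace(H_t)$ contributes $\trace(I_d\,H)$ to the derivative, which is the source of the $(1-\dep_2(\Sigma))I_d$ piece together with a contribution coming out of the denominator's derivative.

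The heart of the matter is differentiating $\Sigma \mapsto \trace\bigl((\Sigma_0^{1/2}\Sigma\Sigma_0^{1/2})^{1/2}\bigr)$. Write $A(\Sigma) = \Sigma_0^{1/2}\Sigma\Sigma_0^{1/2}$ and $B(\Sigma) = A(\Sigma)^{1/2}$, so $B^2 = A$. Differentiating $B^2=A$ along the perturbation gives the Sylvester equation $B\,\dot B + \dot B\,B = \dot A$, and then $\frac{d}{dt}\trace(B) = \trace(\dot B)$; the standard trick is that $\trace(\dot B) = \frac{1}{2}\trace(B^{-1}\dot A)$, which one sees by taking the trace of $B^{-1}(B\dot B + \dot B B) = B^{-1}\dot A$ and using cyclicity plus symmetry of $B$. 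So I would show
\[
	\frac{d}{dt}\,\trace\bigl(A(\Sigma+tH_t)^{1/2}\bigr)\Big|_{t=0}
	= \tfrac{1}{2}\,\trace\bigl(B^{-1}\dot A\bigr),
\]
where $\dot A = \dot{\Sigma_0}^{1/2}\,\Sigma\,\Sigma_0^{1/2} + \Sigma_0^{1/2}\,H\,\Sigma_0^{1/2} + \Sigma_0^{1/2}\,\Sigma\,\dot{\Sigma_0}^{1/2}$ and $\dot{\Sigma_0}$ is the block-diagonal part of $H$, namely $\dot{\Sigma_0} = \Pi_1^\top(\Pi_1 H\Pi_1^\top)\Pi_1 + \Pi_2^\top(\Pi_2 H\Pi_2^\top)\Pi_2$ in the notation of~\eqref{eq:Pi12}. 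The appearance of $J = \Sigma_0^{-1/2}B\Sigma_0^{-1/2}$ and $J_0$ in the statement comes precisely from rewriting $B^{-1}\dot A$ back in terms of $\Sigma_0^{-1/2}$ and sorting the three summands of $\dot A$: the middle term yields $\trace(J^{-1}H)$ after simplification, and the outer two terms, together with the derivative of $\Sigma_0^{1/2}$, collapse (using that $\dot{\Sigma_0}$ is block-diagonal and that $J_0$ picks out the block-diagonal of $J$) into $\trace(J_0 H)$. Justifying that $A^{1/2}$ is Fréchet differentiable in a neighbourhood, so the limit with $H_t \to H$ equals the limit with $H$ fixed, follows from $A(\Sigma)$ being positive definite (since $\Sigma \in \mathbb{S}^d_>$ and $\Sigma_0 \in \mathbb{S}^d_>$ by the distinct-eigenvalue hypothesis) together with analyticity of the matrix square root on positive definite matrices.

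For the denominator, $c_2 = \trace(\Sigma) - \trace\bigl((\Sigma_0^{1/2}\Sigma_m\Sigma_0^{1/2})^{1/2}\bigr)$ with $\Sigma_m = \Sigma_m(\Sigma_1,\Sigma_2)$; by Proposition~\ref{prop:dep:Gauss} this equals $\sum_{j=1}^d \lambda_{j,1} + \lambda_{j,2} - \sum_{j=1}^{p\vee q}(\lambda_{j,1}^2 + \lambda_{j,2}^2)^{1/2}$ in terms of the eigenvalues of $\Sigma_1,\Sigma_2$. Here the distinct-eigenvalue assumption is essential: it makes the maps $\Sigma \mapsto \lambda_{j,r}$ (via $\Sigma_r = \Pi_r\Sigma\Pi_r^\top$) differentiable, with $\frac{d}{dt}\lambda_{j,r} = u_{j,r}^\top (\Pi_r H \Pi_r^\top) u_{j,r}$ for the corresponding eigenvector $u_{j,r}$; the matrices $\Delta_1',\Delta_2'$ collect exactly the chain-rule coefficients $\lambda_{j,r}(\lambda_{j,1}^2+\lambda_{j,2}^2)^{-1/2}$ arising from differentiating $(\lambda_{j,1}^2+\lambda_{j,2}^2)^{1/2}$, and conjugating by $U_1,U_2$ turns the eigenvector quadratic forms back into a trace against $H$, producing $\Upsilon_2$. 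Finally I would apply the quotient rule $\frac{d}{dt}(N/c_2) = (c_2 \dot N - N \dot c_2)/c_2^2 = \dot N/c_2 - \dep_2(\Sigma)\dot c_2/c_2$, substitute the pieces $\dot N = \trace\bigl((I_d - \tfrac12(J_0+J^{-1}))H\bigr)$ and $\dot c_2 = \trace\bigl((I_d - \Upsilon_2)H\bigr)$, and collect terms to read off $M_2$ in the stated form. The main obstacle is the bookkeeping in differentiating $\trace(B^{-1}\dot A)$: tracking how the three summands of $\dot A$ interact with the block-diagonal structure of $\dot{\Sigma_0}$ and repackaging everything in terms of $J$, $J_0$, $J^{-1}$ without sign or transpose errors. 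Everything else is routine matrix calculus, parallel to the $\dep_1$ proof.
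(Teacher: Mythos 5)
Your plan is correct and matches the paper's proof in all essentials: the derivative of the numerator term $\trace\bigl((\Sigma_0^{1/2}\Sigma\Sigma_0^{1/2})^{1/2}\bigr)$ is obtained from the Sylvester-equation derivative of the matrix square root and identified as $\tfrac12\trace\bigl((J_0+J^{-1})H\bigr)$ (the paper packages this as Lemma~\ref{lem:phi:deriv} and Corollary~\ref{cor: D2Num}, handling the $\Sigma_0$-direction terms by a symmetry argument rather than your direct simplification, which also works since the two outer terms reduce to $\trace(JH_0)=\trace(J_0H)$), the denominator is differentiated through the eigenvalues of $\Sigma_1,\Sigma_2$ exactly as in Lemma~\ref{lem: D2Denom}, yielding $\Upsilon_2$, and the quotient rule assembles $M_2$ just as the paper's chain-rule computation does. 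The only slip is cosmetic: $\Sigma_0\in\mathbb{S}^d_>$ follows from $\Sigma\in\mathbb{S}^d_>$ (diagonal blocks of a positive definite matrix are positive definite), not from the distinct-eigenvalue assumption, which is needed only for the differentiability of the ordered-eigenvalue maps entering the denominator.
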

 \begin{proof}[Proof of Theorem \ref{thm: DiffDep2}]
	The proof is similar to the one of Theorem~\ref{thm: DiffDep1}.
	Writing $f(\bar{x}, \bar{y}, \bar{z}) = (\bar{z}-\bar{x}) / (\bar{z}-\bar{y})$, we have 
	\begin{align*}
		\dep_2(\Sigma) &= f(x, y, z), &
		\dep_2(\Sigma + t H_t) &= f(x_t, y_t, z_t)
	\end{align*}
	where
	\begin{align*}
		x &= \trace\left(
		\bigl(\Sigma_0^{1/2}\Sigma\Sigma_0^{1/2}\bigr)^{1/2} 
		\right), &
		y &= \trace\left(
		\bigl(\Sigma_0^{1/2}\Sigma_m\Sigma_0^{1/2}\bigr)^{1/2} 
		\right), &
		z &= \trace ( \Sigma ),
	\end{align*}
	and similarly for $x_t, y_t, z_t$, with $\Sigma$ replaced by $\Sigma + t H_t$.
	If we can show that the three expressions $(x_t-x)/t$, $(y_t-y)/t$ and $(z_t-z)/t$ converge as $t \downarrow 0$, the chain rule yields
	\begin{equation*}
	\frac{\dep(\Sigma + t H_t) - \dep_2(\Sigma)}{t}
	= 
	\dot{f}_x \frac{x_t - x}{t}
	+ \dot{f}_y \frac{y_t - y}{t}
	+ \dot{f}_z \frac{z_t - z}{t}
	+ \oh(1),
	\qquad t \downarrow 0,
	\end{equation*}
	with partial derivatives
	\begin{align*}
	\dot{f}_x &= - \frac{1}{z-y}, &
	\dot{f}_y &= \frac{z-x}{(z-y)^2} = \frac{\dep_2(\Sigma)}{z-y}, &
	\dot{f}_z &= \frac{1-\dep_2(\Sigma)}{z-y}.
	\end{align*}
	By Corollary~\ref{cor: D2Num} and Lemma~\ref{lem: D2Denom}, we have, respectively
	\begin{align*}
		\lim_{t \downarrow 0} \frac{x_t - x}{t}
		&= \frac{1}{2} \trace \bigl( (J_0 + J^{-1}) H \bigr), &
		\lim_{t \downarrow 0} \frac{y_t - y}{t}
		&= \trace(\Upsilon_2 H).
	\end{align*}
	Further, $(z_t - z)/t = \trace(H_t) \to \trace(H)$ as $t \downarrow 0$.
	It follows that
	\begin{align*}
	\frac{\dep(\Sigma + t H_t) - \dep_2(\Sigma)}{t}
	&= \frac{1}{z-y} \left(
		- \frac{x_t - x}{t}
		+ \dep_2(\Sigma) \frac{y_t - y}{t}
		+ (1-\dep_2(\Sigma)) \frac{z_t - z}{t}
	\right)
	+ \oh(1) \\
	&\to \frac{1}{z-y} \left( 
		- \frac{1}{2} \trace\bigl((J_0 + J^{-1}) H\bigr) 
		+ \dep_2(\Sigma) \trace( \Upsilon_2 H)
		+ (1 - \dep_2(\Sigma)) \trace( H )
	\right)
	\end{align*}
	as $t \downarrow 0$.
	Isolating $H$ yields the stated limit.
\end{proof}

\begin{rmk}[Matrices and constants in Theorems~\ref{thm: DiffDep1} and~\ref{thm: DiffDep2}.]
	\label{rmk:matcon}
	The constants $c_1$ and $c_2$ are just the denominators of $\dep_1(\Sigma)$ and $\dep_2(\Sigma)$, respectively.
	The matrices $\Upsilon_1$ and $\Upsilon_2$ determine the Fréchet derivatives at $\Sigma$ of $\trace(\Sigma_m^{1/2})$ and $\trace((\Sigma_0^{1/2} \Sigma_m \Sigma_0^{1/2})^{1/2})$, appearing in the denominators of $\dep_1(\Sigma)$ and $\dep_2(\Sigma)$, see Lemmas~\ref{lem: D1Denom} and~\ref{lem: D2Denom}, respectively.
	The matrix $J$ is the unique solution in~$\mathbb{S}^d_>$ to the equation $J \Sigma_0 J = \Sigma$ and the associated linear operator constitutes the optimal transport with respect to the squared Euclidean distance from $\normal_d(0, \Sigma_0)$ to $\normal_d(0, \Sigma)$ \citep{olkin1982distance}.
\end{rmk}

\begin{rmk}[Fréchet derivative of Bures--Wasserstein distance]
\label{rmk:phi:deriv}
The proof of Theorem~\ref{thm: DiffDep2} requires the Fréchet derivative of the squared Bures--Wasserstein distance $d_W^2$ in~\eqref{eq:W2cor}.
The latter is stated in Lemma~2.4 in \citet{rippl2016limit}, but the formula is incorrect in case of repeated eigenvalues: the final double sum in their Eq.~(21) should extend over all pairs $(i, m) \in \{1, \ldots, d\}^2$ such that $i \ne m$, even those with $\lambda_i = \lambda_m$.
Their expression is derived from Corollary~2.3 in \citet{gilliam2009}, but the projection matrix~$P_j$ in there is the one on the eigenspace of the eigenvalue~$\lambda_j$, which, in case of repeated eigenvalues, has dimension larger than one.
A formula for the Fréchet derivative of $d_W^2$ in the trace form~\eqref{eq:trace} and not requiring eigendecompositions is given in Lemma~\ref{lem:phi:deriv}.
\end{rmk}

The matrix estimate used as input of the plug-in estimator in~\eqref{eq:plug-in} could be a correlation matrix obtained from an estimated covariance matrix by rescaling the $d$ variables by their estimated standard deviations.
To find the asymptotic distribution of the resulting plug-in estimator, it is useful to know the Fréchet derivative of the composite map
\begin{equation}
\label{eq:compmap}
	\Sigma \mapsto \varphi(\Sigma) \mapsto \dep_r\bigl(\varphi(\Sigma)\bigr)
\end{equation}
for $r \in \{1, 2\}$, where, for $\Sigma \in \mathbb{S}^d_\ge$ with positive diagonal elements, we put
\begin{equation}
\label{eq:varphi}
	\varphi(\Sigma) = D_{\Sigma}^{-1/2} \Sigma D_{\Sigma}^{-1/2}
\end{equation}
with $D_A$ the diagonal matrix having the same dimension and diagonal as the square matrix $A$.
The map $\varphi$ is scale invariant in the sense that $\varphi(\Delta \Sigma \Delta) = \varphi(\Sigma)$ for any diagonal matrix $\Delta \in \mathbb{S}^d_>$.
It will therefore be sufficient to calculate the Fréchet derivative of the map~\eqref{eq:compmap} at a $d \times d$ correlation matrix $R$.
Note that $D_R = I_d$ and thus $\varphi(R) = R$ for such a matrix.

\begin{cor}[Differentiability of dependence coefficients after rescaling]
	\label{cor:depcorrFrechet}
	Let $R \in \mathbb{S}^d_>$ be a correlation matrix ($D_R = I_d$).
	Under the assumptions and notation of Theorems~\ref{thm: DiffDep1} and~\ref{thm: DiffDep2} with $\Sigma$ replaced by $R$, we have, for $r \in \{1, 2\}$,
	\[
		\lim_{t \downarrow 0} t^{-1} \bigl( 
			\dep_r(\varphi(R + t H_t)) - \dep_r(R) 
		\bigr)
		=
		\trace\bigl( (M_{R,r} - D_{M_{R,r} R}) H \bigr),
	\]
	where $M_{R,r}$ is the matrix $M_r$ with $\Sigma$ replaced by $R$.
\end{cor}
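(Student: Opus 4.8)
The plan is to apply the chain rule for Fréchet derivatives to the composite map $\Sigma \mapsto \varphi(\Sigma) \mapsto \dep_r(\varphi(\Sigma))$, evaluated at a correlation matrix $R$. First I would recall the two ingredients: Theorems~\ref{thm: DiffDep1} and~\ref{thm: DiffDep2} give that $\Xi \mapsto \dep_r(\Xi)$ is Fréchet differentiable at $\Xi = \varphi(R) = R$ with derivative $G \mapsto \trace(M_{R,r} G)$, where $M_{R,r}$ denotes $M_r$ from those theorems with $\Sigma$ replaced by $R$; and, separately, one needs the Fréchet derivative of $\varphi$ at $R$. Since $\varphi(R)=R$, it suffices to differentiate $\varphi(\Sigma) = D_\Sigma^{-1/2}\Sigma D_\Sigma^{-1/2}$ at $\Sigma = R$ in a perturbation direction $H$ (with the usual $H_t\to H$ refinement, which causes no trouble because $\varphi$ is continuously differentiable near any matrix with positive diagonal).

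The second step is the computation of $D\varphi(R)[H]$. Writing $\Sigma = R + tH_t$, the diagonal matrix $D_\Sigma = I_d + t D_{H_t}$, so $D_\Sigma^{-1/2} = I_d - \tfrac{t}{2} D_{H_t} + \Oh(t^2)$ as $t\downarrow 0$, using that $D_{H_t}\to D_H$ and that the matrix square-root and inverse are smooth near $I_d$. Substituting and collecting the $\Oh(t)$ term gives
\begin{equation*}
	D\varphi(R)[H] = H - \tfrac{1}{2}\bigl( D_H R + R D_H \bigr).
\end{equation*}
Hence, by the chain rule, the derivative of the composite map at $R$ is
\begin{equation*}
	H \mapsto \trace\!\Bigl( M_{R,r}\bigl( H - \tfrac{1}{2}(D_H R + R D_H) \bigr) \Bigr).
\end{equation*}
The third step is to massage this trace into the stated form $\trace\bigl( (M_{R,r} - D_{M_{R,r}R}) H\bigr)$. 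Using cyclicity of the trace and symmetry of $M_{R,r}$ and $R$, the two cross terms are equal: $\trace(M_{R,r} D_H R) = \trace(M_{R,r} R D_H) = \trace(D_H R M_{R,r})$, so together they contribute $-\trace(D_H \cdot R M_{R,r}) = -\trace\bigl(D_{R M_{R,r}}\, H\bigr)$, since for any square matrix $A$ one has $\trace(D_H A) = \sum_i H_{ii} A_{ii} = \trace(D_A H)$ and $D_H$ depends on $H$ only through its diagonal. Finally $D_{R M_{R,r}} = D_{M_{R,r} R}$ because the diagonal of a product $AB$ of symmetric matrices equals the diagonal of $BA$ entrywise ($(AB)_{ii} = \sum_k A_{ik}B_{ki} = (BA)_{ii}$). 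This yields exactly $\trace\bigl((M_{R,r} - D_{M_{R,r}R})H\bigr)$.

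I expect the only genuine subtlety — rather than an obstacle — to be bookkeeping around the $H_t \to H$ convergence and making sure the $\Oh(t^2)$ remainders in the expansions of $D_\Sigma^{-1/2}$ and of $\dep_r$ at $R$ combine cleanly; this is handled by the smoothness of $\varphi$ near $R$ (positive diagonal entries of $R$) together with the hypotheses of Theorems~\ref{thm: DiffDep1} and~\ref{thm: DiffDep2} (namely $R \in \mathbb{S}^d_>$ and the distinct-eigenvalue conditions on the diagonal blocks $\Sigma_1,\Sigma_2$), which guarantee that $\dep_r$ is actually Fréchet (not merely Hadamard) differentiable at $R$, so the composition of the two linear derivatives is legitimate. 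No separate case analysis is needed since all the structural work was already done in the two theorems.
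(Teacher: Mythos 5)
Your proposal is correct and follows essentially the same route as the paper's proof: compute the derivative $\dot{\varphi}_R(H) = H - \tfrac{1}{2}(D_H R + R D_H)$ (you do this via the matrix expansion of $D_\Sigma^{-1/2}$, the paper entrywise, which is an immaterial difference), then apply the chain rule with Theorems~\ref{thm: DiffDep1} and~\ref{thm: DiffDep2}, and finish with the same cyclicity/diagonal-of-symmetric-products manipulation to arrive at $\trace\bigl((M_{R,r} - D_{M_{R,r}R})H\bigr)$.
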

\begin{proof}[Proof of Corollary~\ref{cor:depcorrFrechet}]
	Write $H_t = (h_{t,jk})_{j,k=1}^d$ and $H = (h_{jk})_{j,k=1}^d$.
	For any $j \in \{1, \ldots, d\}$, we have 
	\[ 
		[R + t H_t]_{jj}^{-1/2} 
		= (1 + t h_{t,jj})^{-1/2} 
		= 1 - \tfrac{1}{2} t h_{jj} + \oh(t),
		\qquad t \downarrow 0.
	\]
	Write $R = (\rho_{jk})_{j,k=1}^d$.
	It follows that, for $j, k \in \{1, \ldots, d\}$,
	\begin{align*}
		[\varphi(R + t H_t)]_{jk}
		&= \left(1 - \tfrac{1}{2} t h_{jj} + \oh(t)\right)^{-1/2} 
		\bigl(\rho_{jk} + t h_{jk} + \oh(t)\bigr)
		\left(1 - \tfrac{1}{2} t h_{kk} + \oh(t)\right)^{-1/2} \\
		&= \rho_{jk} + t \left( h_{jk} - \tfrac{1}{2} (h_{jj} \rho_{jk} + \rho_{jk} h_{kk}) \right) + \oh(t),
		\qquad t \downarrow 0.
	\end{align*}
	In matrix form, we find
	\begin{equation}
	\label{eq:diffphi}
	\lim_{t \downarrow 0}
	t^{-1} \bigl( \varphi(R + t H_t) - R \bigr)
	= H - \tfrac{1}{2} (D_H R + R D_H)
	=: \dot{\varphi}_R(H).
	\end{equation}
	Note that the operator $\dot{\varphi}_R : \mathbb{S}^d \to \mathbb{S}^d$ is indeed linear. 
	By the chain rule, we have
	\[
	\lim_{t \downarrow 0}
	t^{-1} \left( \dep_r\bigl(\varphi(R + t H_t)) - \dep_r(R) \right)
	= \trace \bigl(M_{R,r} \dot{\varphi}_R(H) \bigr).
	\]
	By the cyclic permutation property of the trace operator, the identity $\trace(A \diag(B)) = \trace(\diag(A) B)$ for square matrices $A$ and $B$, and the fact that $R$ and $M_{R,r}$ are symmetric and thus $R M_{R,r}$ and $M_{R,r} R$ share the same diagonal, we get
	\begin{align}
	\label{eq:diffcomp}
	\trace\bigl(M_{R,r} \dot{\varphi}_R(H)\bigr)
	&= \trace\left(
		M_{R,r} \bigl( H - \tfrac{1}{2} (D_H R + R D_H) \bigr)
	\right) %
	= \trace(M_{R,r} H) - \trace(D_{M_{R,r} R} H)  %
	= \trace \bigl( (M_{R,r} - D_{M_{R,r} R}) H \bigr). \qedhere
	\end{align}
\end{proof}

\subsection{Asymptotic distributions}
\label{sec:estim:asy}

Suppose that $\hat{\Sigma}_n$ is an estimator sequence of a covariance matrix $\Sigma$ such that, for some deterministic sequence $0 < a_n \to \infty$, we have
\begin{equation}
\label{eq:H}
	a_n \left( \hat{\Sigma}_n - \Sigma \right) \dto H, \qquad n \to \infty,
\end{equation}
where $H$ is a random symmetric matrix and the arrow $\dto$ denotes convergence in distribution.
The delta method in combination with Theorems~\ref{thm: DiffDep1} and~\ref{thm: DiffDep2} then yields
\begin{equation}
\label{eq:Drlim}
	a_n \left( \dep_r(\hat{\Sigma}_n) - \dep(\Sigma) \right)
	\dto
	\trace(M_r H), \qquad n \to \infty,
	\qquad r \in \{1, 2\}.
\end{equation}
Next, suppose $\Sigma$ has correlation matrix $\varphi(\Sigma) = R$ as in~\eqref{eq:varphi} and we wish to estimate the dependence coefficient based on the estimated correlation matrix $\varphi(\hat{\Sigma}_n)$.
The continuous mapping theorem and~\eqref{eq:H} imply
\[
	a_n \left( D_\Sigma^{-1/2} \hat{\Sigma}_n D_\Sigma^{-1/2} - R \right)
	\dto D_\Sigma^{-1/2} H D_\Sigma^{-1/2}, \qquad n \to \infty.
\]
By scale invariance of $\varphi$, Corollary~\ref{cor:depcorrFrechet} and the delta method, it follows that, for $r \in \{1, 2\}$,
\begin{align}
\label{eq:Drlim2}
	a_n \left( \dep_r(\varphi(\hat{\Sigma}_n)) - \dep_r(R) \right)
	&\dto
	\trace \bigl( 
		(M_{R,r} - D_{M_{R,r} R}) D_\Sigma^{-1/2} H D_\Sigma^{-1/2} 
	\bigr),
	\qquad n \to \infty.
\end{align}

Often, the joint distribution of the elements of the random matrix $H$ in~\eqref{eq:H} is Gaussian.
By linearity, the weak limits in \eqref{eq:Drlim} and~\eqref{eq:Drlim2} are then Gaussian too.
This includes for instance the sample covariance matrix of an independent random sample from a distribution with finite fourth moments \citep[Thm~3.1.4]{kollo2006advanced} or the matrix of pairwise Spearman's rank correlation coefficients of an independent random sample from a continuous distribution \citep[Thm~2.2]{el2003spearman}.

Here, we work out the limit distributions of the plug-in estimators in two settings:
\begin{itemize}
\item[(GD)]
	the sample correlation matrix from an independent random sample from a Gaussian distribution;
\item[(GC)]
	the matrix of normal scores rank correlation coefficients of an independent random sample from a continuous distribution with a Gaussian copula (see Section~\ref{sec: G-copulas}).
\end{itemize}
The common limit distribution in the two cases is centered normal.
The asymptotic variance is an explicit and continuous function of the underlying correlation matrix.
The latter can therefore be estimated consistently by a plug-in estimator too,
permitting the construction of asymptotic confidence intervals.

For setting~(GD), let $\xi_1,\ldots,\xi_n$ be an independent random sample from the $d$-variate normal distribution $\normal_d(\mu, \Sigma)$ with mean vector $\mu \in \reals^d$ and covariance matrix $\Sigma \in \mathbb{S}^d$.
We want to estimate the dependence coefficients $\dep_r(R)$ for $r \in \{1, 2\}$ associated to the correlation matrix $R = \varphi(\Sigma)$.
The plug-in estimator is $\hat{\dep}_{n,r} = \dep_r(\hat{R}_{n})$ where 
\begin{equation}
\label{eq:Rnhat}
	\hat{R}_n = \varphi(\hat{\Sigma}_n)
	\qquad \text{with} \qquad
	\hat{\Sigma}_n = \frac{1}{n-1} \sum_{i=1}^n (\xi_i - \bar{\xi}_n) (\xi_i - \bar{\xi}_n)^\top 
\end{equation}
is the empirical correlation matrix, based on the empirical covariance matrix $\hat{\Sigma}_n$ and with $\bar{\xi}_n = n^{-1} \sum_{i=1}^n \xi_i$ the sample mean vector.

For setting~(GC), let $\xi_1,\ldots,\xi_n$ be an independent random sample from a $d$-variate cdf $F$ with continuous univariate margins $F_1,\ldots,F_d$ and G-copula equal to the cdf of $\normal_d(0, R)$ with correlation matrix $R$.
The plug-in estimator is now $\check{\dep}_{n,r} = \dep_r(\check{R}_n)$ where 
\begin{equation}
\label{eq:Rncheck}
	\check{R}_{n} = (\check{\rho}_{n,jk})_{j,k=1}^d
	\qquad \text{with} \qquad
	\check{\rho}_{n,jk} 
	= \frac{1}{n} \sum_{i=1}^n \hat{Z}_{ij} \hat{Z}_{ik}
	\Bigg/ \frac{1}{n} \sum_{i=1}^n \bigl(\qnorm(\tfrac{i}{n+1})\bigr)^{2},	
\end{equation}
is the matrix of normal scores rank correlation coefficients
\citep[p.~113]{hajek+s:1967}, defined in terms of the normal scores
\[
	\hat{Z}_{ij} = \qnorm\bigl( \tfrac{n}{n+1} \hat{F}_{nj}(\xi_{ij}) \bigr)
\]
and the marginal empirical cdf $x_j \mapsto \hat{F}_{nj}(x_j) = n^{-1} \sum_{i=1}^n \1\{\xi_{ij} \le x_j\}$.

Surprisingly, the estimators $\hat{R}_n$ and~$\check{R}_n$ in settings~(GD) and~(GC), respectively, share the same asymptotic expansions: see Lemma~\ref{lem:Rn}, which repackages Theorem~3.1 in \citet{klaassen1997efficient}.
This explains why the limit distributions of the plug-in estimators in both settings coincide.
The form of the limit variance is a consequence of a particular property of the limit distribution of the empirical covariance matrix of a sample from the multivariate standard Gaussian distribution (Lemma~\ref{lem:Wn:asym}).

\begin{thm}[Asymptotic normality of plug-in estimators: Gaussian (copula) case]
\label{thm: LimitDep}
Let $R \in \mathbb{S}^d_>$ be a correlation matrix ($D_R = I_d$) such that the conditions of Theorem~\ref{thm: DiffDep1} are satisfied with $\Sigma$ replaced by $R$.
In settings~(GD) and~(GC) above, we have, for $\dep_{n,r} \in \{ \hat{\dep}_{n,r}, \check{\dep}_{n,r} \}$ and $r \in \{1, 2\}$,
\[
	\sqrt{n} \bigl( \dep_{n,r} - \dep_r(R) \bigr)
	\dto
	\normal(0, \zeta_r^2), \qquad n \to \infty,
\]
with asymptotic variance
\begin{equation}
\label{eq:zetar2}
	\zeta_r^2 = 2 \trace \left(
		\bigl(
			R (M_{R,r} - D_{M_{R,r} R})
		\bigr)^2
	\right)
\end{equation}
and $M_{R,r}$ the matrix $M_r$ in Theorems~\ref{thm: DiffDep1} and~\ref{thm: DiffDep2} with $\Sigma$ replaced by $R$.
\end{thm}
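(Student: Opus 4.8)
The plan is to reduce both settings to a single delta-method argument driven by the Fréchet derivatives computed in Theorems~\ref{thm: DiffDep1}--\ref{thm: DiffDep2} and Corollary~\ref{cor:depcorrFrechet}, and then to read off the asymptotic variance from the Wishart-type structure of the limiting covariance of a Gaussian sample covariance matrix.

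\emph{Step 1: a common matrix central limit theorem.} In setting~(GD) I would first replace $\xi_i$ by the rescaled sample $D_\Sigma^{-1/2}\xi_i \sim \normal_d(\,\cdot\,, R)$; since $\varphi$ is scale invariant, $\hat R_n = \varphi(\hat\Sigma_n)$ equals $\varphi$ applied to the empirical covariance matrix of this rescaled sample, whose fluctuations obey the classical Gaussian sample-covariance CLT. In setting~(GC) the latent Gaussian vector has law $\normal_d(0,R)$ and $\check R_n$ is, by construction, a correlation matrix; the key input here is Lemma~\ref{lem:Rn} (a repackaging of Theorem~3.1 in \citet{klaassen1997efficient}), which shows that $\sqrt n(\check R_n - R)$ admits the \emph{same} asymptotic expansion as $\sqrt n(\hat R_n - R)$, i.e.\ the normal-scores rank correlation matrix is first-order equivalent to the sample correlation matrix of the latent Gaussian vectors. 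Consequently, in both settings $\sqrt n(\,\cdot\, - R) \dto H$ for one and the same centered Gaussian random matrix $H$, which by Lemma~\ref{lem:Wn:asym} has the law of the image under the derivative $\mathrm{d}\varphi_R$ of the limit $H^{\ast}$ of the empirical covariance matrix of a $\normal_d(0,R)$ sample, the latter having covariance structure $\cov(H^{\ast}_{ij}, H^{\ast}_{kl}) = R_{ik}R_{jl} + R_{il}R_{jk}$.

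\emph{Step 2: the delta method.} By Corollary~\ref{cor:depcorrFrechet} the map $\dep_r \circ \varphi$ is Fréchet differentiable at $R$ with derivative $H \mapsto \trace\bigl((M_{R,r} - D_{M_{R,r}R})H\bigr)$; equivalently, by Theorems~\ref{thm: DiffDep1} and~\ref{thm: DiffDep2}, $\dep_r$ is Fréchet differentiable at $R$ with derivative $H \mapsto \trace(M_{R,r}H)$, and $\trace\bigl(M_{R,r}\,\mathrm{d}\varphi_R(H^{\ast})\bigr) = \trace\bigl((M_{R,r}-D_{M_{R,r}R})H^{\ast}\bigr)$ because $\mathrm{d}\varphi_R$ has zero diagonal. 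Applying the delta method to either representation together with the convergences of Step~1 yields, for $\dep_{n,r} \in \{\hat\dep_{n,r}, \check\dep_{n,r}\}$ and $r \in \{1,2\}$,
\[
	\sqrt n\bigl(\dep_{n,r} - \dep_r(R)\bigr) \dto \trace(A_r H^{\ast}),
	\qquad A_r := M_{R,r} - D_{M_{R,r}R} \in \mathbb{S}^d,
\]
and the right-hand side is a centered normal random variable since $H^{\ast}$ is centered Gaussian.

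\emph{Step 3: identification of the variance.} Writing $\trace(A_r H^{\ast}) = \sum_{i,j} (A_r)_{ij} H^{\ast}_{ij}$ and using the symmetry of $A_r$ together with the covariance formula for $H^{\ast}$,
\[
	\var\bigl(\trace(A_r H^{\ast})\bigr)
	= \sum_{i,j,k,l} (A_r)_{ij}(A_r)_{kl}\bigl(R_{ik}R_{jl} + R_{il}R_{jk}\bigr)
	= 2\,\trace\bigl((A_r R)^2\bigr)
	= 2\,\trace\bigl((R A_r)^2\bigr),
\]
which is precisely $\zeta_r^2$ in~\eqref{eq:zetar2}. I expect the main obstacle to lie in Step~1, namely in making rigorous that the rank-based estimator in setting~(GC) is first-order equivalent to a plug-in of a genuine sample covariance matrix, so that the delta method applies with the Fréchet (hence Hadamard) derivative already in hand; this is where the results of \citet{klaassen1997efficient} and the special algebraic structure of the Gaussian sample-covariance limit (Lemmas~\ref{lem:Rn} and~\ref{lem:Wn:asym}) do the real work, the remaining steps being routine once the limit matrix $H^{\ast}$ and the matrix $A_r$ have been identified.
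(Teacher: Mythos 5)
Your proposal is correct and follows essentially the same route as the paper: the common asymptotic expansion of $\hat R_n$ and $\check R_n$ from Lemma~\ref{lem:Rn}, the delta method via the derivative $H \mapsto \trace\bigl((M_{R,r}-D_{M_{R,r}R})H\bigr)$ of Corollary~\ref{cor:depcorrFrechet}, and a Gaussian trace-variance identification. The only (cosmetic) difference is that you compute the variance directly from the Wishart-type covariance $\cov(H^*_{ij},H^*_{kl})=R_{ik}R_{jl}+R_{il}R_{jk}$, whereas the paper whitens via $Z_i = U\Lambda^{1/2}\epsilon_i$ and invokes the standard-Gaussian formula $\expec[\trace(AW)\trace(BW)]=2\trace(AB)$ of Lemma~\ref{lem:Wn:asym} (which, note, is stated only for the identity-covariance case, so your citation of it for the $R$-covariance structure is slightly loose though the fact itself is standard and equivalent after whitening).
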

\begin{proof}[Proof of Theorem \ref{thm: LimitDep}]
	We have $\dep_{n,r} = \dep_r(R_n)$ with $R_n$ equal to either $\hat{R}_n$ in~\eqref{eq:Rnhat} in the Gaussian distribution setting~(GD) or $\check{R}_n$ in~\eqref{eq:Rncheck} in the Gaussian copula setting~(GC).
	In both cases, we have the expansion~\eqref{eq:Rn:mat} and thus
	\[
		\sqrt{n} (R_n - R) 
		= \sqrt{n} \left( 
			\varphi\left( {\textstyle\frac{1}{n} \sum_{i=1}^n Z_i Z_i^\top} \right) - R 
		\right) + \oh_p(1), 
		\qquad n \to \infty.
	\]
	Let the eigendecomposition of $R$ be $R = U \Lambda U^\top$, where the diagonal matrix $\Lambda$ contains the eigenvalues of $R$ on the diagonal and the columns of the orthogonal matrix $U$ contain the associated eigenvectors.
	Then $Z_i = U \Lambda^{1/2} \epsilon_i$ for $i \in \{1,\ldots,n\}$ where $\epsilon_1,\ldots,\epsilon_n$ is an independent random sample from $\normal_d(0, I_d)$.
	For $W_n$ as in~\eqref{eq:Wn}, we find
	\[
		\frac{1}{\sqrt{n}} \left( 
			\frac{1}{n} \sum_{i=1}^n Z_i Z_i^\top - R
		\right)
		= U \Lambda^{1/2} W_n \Lambda^{1/2} U^\top.
	\]
	Combining the previous expansions with the delta method and Corollary~\ref{cor:depcorrFrechet}, we get
	\begin{align*}
		\sqrt{n} \bigl( \dep_{n,r} - \dep_r(R) \bigr)
		&= \trace \left( (M_{R,r} - D_{M_{R,r} R}) U \Lambda^{1/2} W_n \Lambda^{1/2} U^\top \right) + \oh_p(1) \\
		&= \trace \left( 
			\Lambda^{1/2} U^\top (M_{R,r} - D_{M_{R,r} R}) U \Lambda^{1/2} W_n
		\right) + \oh_p(1) \\
		&\dto \trace \left( 
			\Lambda^{1/2} U^\top (M_{R,r} - D_{M_{R,r} R}) U \Lambda^{1/2} W
		\right), \qquad n \to \infty,
	\end{align*}
	with $W$ the random matrix in Lemma~\ref{lem:Wn:asym}.
	By the covariance formula~\eqref{eq:covAB} in the same lemma, the limit is centered Gaussian with asymptotic variance
	\[
		2 \trace \left( \bigl( 
			\Lambda^{1/2} U^\top (M_{R,r} - D_{M_{R,r} R}) U \Lambda^{1/2}
		\bigr)^2 \right)
		=
		2 \trace \left( \bigl( R (M_{R,r} - D_{M_{R,r} R}) \bigr)^2 \right)
	\]
	for $r \in \{1, 2\}$, using the cyclical property of the trace.
\end{proof}

For $r \in \{1, 2\}$, let $\zeta_{n,r}^2$ be the plug-in estimator of $\zeta_r^2$ given by replacing $R$ in~\eqref{eq:zetar2} by $\hat{R}_{n}$ and $\check{R}_{n}$ in settings~(GD) and~(GC), respectively.

\begin{cor}[Asymptotic normality of studentized plug-in estimators]
\label{cor: LimitDep}
	In the set-up of Theorem~\ref{thm: LimitDep}, we have $\zeta_{n,r}^2 \dto \zeta_r^2$ as $n \to \infty$ for $r \in \{1, 2\}$. If $\zeta_r^2 > 0$, then also
	\[
		\sqrt{n} \bigl(\dep_{n,r} - \dep_r(R)\bigr) / \zeta_{n,r} 
		\dto \normal(0, 1), \qquad n \to \infty.
	\]
\end{cor}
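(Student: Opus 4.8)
The plan is to derive both assertions from two ingredients: the consistency of the matrix estimators $\hat{R}_n$ (setting~(GD)) and $\check{R}_n$ (setting~(GC)) for $R$, and the continuity of the map $R' \mapsto \zeta_r^2(R')$ given by the right-hand side of~\eqref{eq:zetar2} in a neighbourhood of $R$. Granting these, the convergence $\zeta_{n,r}^2 \dto \zeta_r^2$ is a direct application of the continuous mapping theorem, and the studentized statement then follows from Slutsky's lemma together with the weak convergence established in Theorem~\ref{thm: LimitDep}, provided $\zeta_r^2 > 0$ so that $\zeta_{n,r}$ is positive with probability tending to one.

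For the consistency, in setting~(GD) I would note that the law of large numbers gives $\hat{\Sigma}_n \to \Sigma$ in probability, that $\varphi$ in~\eqref{eq:varphi} is continuous at every matrix with strictly positive diagonal, and hence $\hat{R}_n = \varphi(\hat{\Sigma}_n) \to R$ in probability; in setting~(GC), the same conclusion for $\check{R}_n$ is immediate from the asymptotic expansion in Lemma~\ref{lem:Rn} (or already from the tightness implicit in the weak limit underlying Theorem~\ref{thm: LimitDep}). For the continuity, the key observation is that the hypotheses imposed on $R$---namely $R \in \mathbb{S}^d_>$ together with simplicity of the $p$ eigenvalues of $\Pi_1 R \Pi_1^\top$ and of the $q$ eigenvalues of $\Pi_2 R \Pi_2^\top$, with $\Pi_1,\Pi_2$ as in~\eqref{eq:Pi12}---define an open set $\mathcal{U} \ni R$. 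On $\mathcal{U}$ every building block of $M_{R',r}$ appearing in Theorems~\ref{thm: DiffDep1} and~\ref{thm: DiffDep2} varies continuously with $R'$: the matrix roots and inverses $\Sigma'^{-1/2}$, $\Sigma_0'^{-1/2}$, $\Sigma_m'^{1/2}$, the solution $J$ of $J\Sigma_0'J=\Sigma'$, the coefficients $\dep_r(R')$ (continuous because Fréchet differentiable, by those theorems), and the matrices $\Upsilon_1,\Upsilon_2$, which are assembled from the eigendecompositions of $\Pi_1 R' \Pi_1^\top$ and $\Pi_2 R' \Pi_2^\top$ and therefore depend smoothly on $R'$ precisely because the eigenvalues stay simple on $\mathcal{U}$. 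Since $A \mapsto D_A$ and the trace are continuous, the map $R' \mapsto \zeta_r^2(R') = 2\trace\bigl((R'(M_{R',r}-D_{M_{R',r}R'}))^2\bigr)$ is continuous on $\mathcal{U}$.

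The main obstacle is exactly this continuity step, and more specifically the smooth dependence of the eigenprojections hidden in $\Upsilon_1,\Upsilon_2$: were an eigenvalue of $\Pi_1 R \Pi_1^\top$ or $\Pi_2 R \Pi_2^\top$ repeated, the derivatives would cease to be linear (the pathology flagged after Corollary~\ref{cor:depcorrFrechet} and visible in Example~\ref{ex:trivequi} at $\rho = 0$) and $\zeta_r^2$ would not even be well defined by~\eqref{eq:zetar2}; the distinct-eigenvalue hypothesis rules this out, and since it is an open condition, $R_n$ (standing for $\hat{R}_n$ or $\check{R}_n$) lies in $\mathcal{U}$ with probability tending to one, so $\zeta_{n,r}^2$ is eventually well defined and the continuous mapping theorem applies, giving $\zeta_{n,r}^2 \dto \zeta_r^2$. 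Finally, when $\zeta_r^2 > 0$ we have $\zeta_{n,r} \dto \zeta_r > 0$, and Slutsky's lemma applied to $\sqrt{n}(\dep_{n,r}-\dep_r(R)) \dto \normal(0,\zeta_r^2)$ together with $\zeta_{n,r} \dto \zeta_r$ yields $\sqrt{n}(\dep_{n,r}-\dep_r(R))/\zeta_{n,r} \dto \normal(0,1)$.
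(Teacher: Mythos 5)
Your proposal is correct and follows essentially the same route as the paper's own proof: consistency of $\hat{R}_n$ and $\check{R}_n$, continuity of $R' \mapsto M_{R',r}$ (and hence of $\zeta_r^2$) guaranteed by the distinct-eigenvalue assumption on the diagonal blocks, the continuous mapping theorem, and Slutsky's lemma for the studentized limit. Your added remarks on the openness of the distinct-eigenvalue condition and the continuity of each building block merely spell out details the paper leaves implicit.
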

\begin{proof}[Proof of Corollary~\ref{cor: LimitDep}]
	Since $\hat{R}_n$ in setting (GD) and $\check{R}_n$ in setting (GC) are consistent estimators of $R$, it suffices to check that $M_{R,r}$ is a continuous function of $R$.
	To do so, we need to inspect the formulas for $M_1$ and $M_2$ in Theorems~\ref{thm: DiffDep1} and~\ref{thm: DiffDep2}.
	The crucial point is that the eigenvalues and eigenvectors of the upper and lower diagonal blocks $R_1$ (dimension $p \times p$) and $R_2$ (dimension $q \times q$) depend continuously on $R$, since by assumption these two blocks have $p$ and $q$ distinct eigenvalues, respectively.
\end{proof}
Corollary~\ref{cor: LimitDep} permits a standard construction of asymptotic confidence intervals for $\dep_r(R)$.
An alternative would be to employ the bootstrap as in \citet{rippl2016limit}. We do not develop this here in view of the satisfactory finite-sample performance (\ref{sec:simu:CI}) of the confidence intervals based on the normal approximation.

\begin{rmk}[Zero coefficient and testing independence]
If $\dep_r(R) = 0$, then necessarily $\zeta_r^2 = 0$ in Theorem~\ref{thm: LimitDep}: $\sqrt{n}(\dep_{n,r} - \dep_r(R))$ is non-negative and its limit distribution is centered normal, so the asymptotic variance must be zero.
This means that Theorem~\ref{thm: LimitDep} and Corollary~\ref{cor: LimitDep} cannot be used to construct tests for independence.
Instead, a higher-order result would be needed, stating weak convergence of $n \dep_{n,r}$ to a non-degenerate limit distribution, as in \citet[Theorem~2.3]{rippl2016limit}.
Since $\dep_r(R) = 0$ does not imply independence anyway, we do not pursue this idea further.
\end{rmk}

\begin{rmk}[$d = 2$]
For bivariate correlation matrices, the dependence coefficient $\dep_1(R) = \dep_2(R)$ is a smooth function of the pairwise correlation $\rho$ (Example~\ref{ex:bivcorr}).
The estimator $\dep_{n,r}$ is then equal to the corresponding value of the coefficient at the estimated correlation.
The limit distribution in Theorem~\ref{thm: LimitDep} is equal to the one given by the delta method in combination with the asymptotic normality of the empirical correlation for the bivariate normal distribution in setting~(GD) and the normal scores rank correlation for the bivariate Gaussian copula in setting~(GC).
\end{rmk}

\subsection{Additional lemmas}%
\label{sec:estim:lemmas}

The following lemmas played a role in the proofs of the results in this section.
Recall that $\mathbb{S}^d$ denotes the set of real symmetric $d \times d$ matrices and $\mathbb{S}^d_> \subset \mathbb{S}^d$ the subset of positive definite such matrices.

\begin{lem}
	\label{lem:sqrt:deriv}
	Let $B \in \mathbb{S}^d_>$ and let $H_t, H \in \mathbb{S}^d$ for $t > 0$ be such that $H_t \to H$ element-wise as $t \downarrow 0$. Then 
	\begin{equation}
	\label{eq:sylvester}
		\lim_{t \downarrow 0}
		t^{-1} \bigl( (B+tH_t)^{1/2} - B^{1/2} \bigr)
		=
		X,	
	\end{equation}
	where $X \in \mathbb{S}^d$ is the solution to the Sylvester equation %
	$
		B^{1/2} X + X B^{1/2} = H.
	$
	Moreover,
	\begin{equation}
	\label{eq:Tdiff:simple}
		\lim_{t \downarrow 0}
		t^{-1} \left( 
			\trace\bigl((B + tH_t)^{1/2}\bigr) - \trace\bigl(B^{1/2}\bigr) 
		\right)
		= \trace(X) = \tfrac{1}{2} \trace(B^{-1/2} H).%
	\end{equation}
\end{lem}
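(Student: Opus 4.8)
The plan is to work directly with the symmetric square roots. For $t>0$ small enough, $B+tH_t \in \mathbb{S}^d_>$, since $B \in \mathbb{S}^d_>$ and $\{H_t\}$ is bounded (being convergent); hence $S_t := (B+tH_t)^{1/2}$ and $S := B^{1/2}$ are well-defined symmetric positive definite matrices with $S_t^2 = B+tH_t$ and $S^2 = B$. Subtracting these and using the factorisation $S_t^2 - S^2 = S_t(S_t - S) + (S_t - S)S$, and setting $D_t := t^{-1}(S_t - S) \in \mathbb{S}^d$, we obtain the key identity
\[
	S_t D_t + D_t S = H_t .
\]
The proof then proceeds in three steps: (a) show that $\{D_t\}$ stays bounded as $t \downarrow 0$; (b) recall that $S_t \to S$; (c) extract a convergent subsequence and pass to the limit, identifying the limit through the Sylvester equation whose solution is unique.

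For step (a), introduce the linear map $\mathcal{L}_t : \reals^{d\times d} \to \reals^{d\times d}$, $Y \mapsto S_t Y + Y S$. It is self-adjoint for the Frobenius inner product (because $S_t^\top = S_t$ and $S^\top = S$), and if $u,v$ are eigenvectors of $S_t, S$ with eigenvalues $\mu,\nu$, then $uv^\top$ is an eigenvector of $\mathcal{L}_t$ with eigenvalue $\mu+\nu$; running over orthonormal eigenbases gives all $d^2$ eigenvalues. Since $\lambda_{\min}(S_t) = \lambda_{\min}(B+tH_t)^{1/2} \ge (\lambda_{\min}(B)/2)^{1/2}$ for $t$ small, the eigenvalues of $\mathcal{L}_t$ are bounded below by $c_0 := (\lambda_{\min}(B)/2)^{1/2} + \lambda_{\min}(B)^{1/2} > 0$, uniformly in small $t$. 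Hence $\mathcal{L}_t$ is invertible with $\|\mathcal{L}_t^{-1}\| \le c_0^{-1}$, so $\|D_t\| = \|\mathcal{L}_t^{-1} H_t\| \le \|H_t\|/c_0$ is bounded. Step (b) is the continuity of the matrix square root on $\mathbb{S}^d_>$ (e.g.\ via the holomorphic functional calculus) applied to $B + tH_t \to B$.

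For step (c), let $t_n \downarrow 0$ be arbitrary; by boundedness there is a further subsequence along which $D_{t_n} \to D \in \mathbb{S}^d$, and letting $n \to \infty$ in the key identity yields $SD + DS = H$. Because the operator $Y \mapsto SY + YS$ is invertible (its eigenvalues $\nu_i + \nu_j$ are strictly positive), $D$ must equal the unique solution $X$ of $B^{1/2}X + XB^{1/2} = H$. As every convergent subsequence of the bounded net $\{D_t\}$ has this same limit, $D_t \to X$, which is \eqref{eq:sylvester}. Finally, \eqref{eq:Tdiff:simple} follows from continuity of the trace, $\trace(D_t) \to \trace(X)$, together with the algebraic identity obtained by multiplying $SX + XS = H$ on the left by $S^{-1} = B^{-1/2}$ and using cyclicity: $2\trace(X) = \trace(X) + \trace(S^{-1}XS) = \trace(B^{-1/2}H)$.

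The main obstacle — modest, but the genuine crux — is step (a): producing a bound on $D_t$ that is uniform as $t \downarrow 0$. Everything else (continuity of $A \mapsto A^{1/2}$, self-adjointness and invertibility of the Sylvester operator, the subsequence argument, and the trace identity) is routine once that uniform bound is available.
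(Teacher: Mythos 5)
Your proof is correct, but it reaches the conclusion by a genuinely different route than the paper for the existence part of \eqref{eq:sylvester}. The paper simply invokes the analyticity of $z \mapsto z^{1/2}$ on the right half-plane (i.e.\ the known Fréchet differentiability of the matrix square root at a positive definite matrix) to assert that the limit exists, writes the expansion $(B+tH_t)^{1/2} = B^{1/2} + tX + \oh(t)$, and identifies $X$ by squaring and matching the terms linear in $t$; you instead make the whole existence argument elementary and self-contained: from $S_t^2 - S^2 = S_t(S_t-S) + (S_t-S)S$ you get the exact identity $S_t D_t + D_t S = H_t$ for $D_t = t^{-1}(S_t - S)$, you bound $D_t$ uniformly by noting that the self-adjoint Sylvester operator $Y \mapsto S_t Y + Y S$ has spectrum $\{\mu_i + \nu_j\}$ bounded below by a constant $c_0 > 0$ for small $t$, and you conclude by compactness plus uniqueness of the solution of $B^{1/2}X + XB^{1/2} = H$. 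Your argument buys an explicit quantitative bound, avoids appealing to the functional calculus, and handles the moving direction $H_t$ directly, at the cost of more length; the paper's argument is shorter but leans on the cited differentiability of the square root. The derivation of \eqref{eq:Tdiff:simple} (premultiply by $B^{-1/2}$, take traces, use cyclicity) is identical in both.
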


In the sequel, we will also use the notation $\psi : \mathbb{S}^d_> \to \mathbb{S}^d_> : B \mapsto B^{1/2}$ and denote the Fréchet derivative of the latter map at $B$ evaluated in $G$ by $D \psi_B (G)$.

\begin{proof}
	The existence of the limit \eqref{eq:sylvester} follows from the fact that function $z \mapsto z^{1/2}$ is analytic on the positive part of the complex plane and the fact that $B$ has positive eigenvalues.
	Squaring both sides of the expansion 
	\[
	(B + t H_t)^{1/2} = B^{1/2} + t X + \oh(t)
	\] as $t \downarrow 0$ yields
	$
		B + t H_t 
		= \bigl(B^{1/2} + t X + \oh(t)\bigr)^2 
		= B + t (B^{1/2} X + X B^{1/2}) + \oh(t)$
		as $t \downarrow 0$.
	Examining the terms linear in $t$ yields the stated Sylvester equation~\eqref{eq:sylvester}.
	In that equation, premultiply both sides with $B^{-1/2}$ and take the trace to see that
	$
	\trace(X) + \trace(B^{-1/2} X B^{1/2}) = \trace(B^{-1/2} H).
	$
	But 
	$
	\trace(B^{-1/2} X B^{1/2}) = \trace(X B^{1/2} B^{-1/2}) = \trace(X)
	$ and thus
	$
	\trace(X) = \tfrac{1}{2} \trace(B^{-1/2} H)$.
\end{proof}

For $A \in \mathbb{S}^d$, let $L(A) \in \mathbb{S}^d$ be the diagonal matrix whose diagonal is equal to the $d$ eigenvalues (counting multiplicities) of $A$ in decreasing order.

\begin{lem}
	\label{lem: DifEig}
	Let $A \in \mathbb{S}^d$ have $d$ distinct (real) eigenvalues and let the orthogonal matrix $U \in \reals^{d \times d}$ contain the associated eigenvectors as columns.
	Let $H_t, H \in \mathbb{S}^d$ for $t > 0$ be such that $H_t \to H$ element-wise as $t \downarrow 0$. Then 
	\[
		\lim_{t \downarrow 0} t^{-1} \bigl( L(A + t H_t) - L(A) \bigr)
		= D_{U^\top H U}
		=: \dot{L}_A(H).
	\]
\end{lem}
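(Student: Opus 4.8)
The plan is to treat this as a first-order eigenvalue perturbation statement. Write $A = U\Lambda U^\top$ with $\Lambda = L(A) = \diag(\lambda_1, \dots, \lambda_d)$ and $\lambda_1 > \dots > \lambda_d$, and set $B_t = A + tH_t$ for $t > 0$. First I would record that the eigenvalues of a symmetric matrix depend continuously on its entries (e.g.\ by Weyl's inequality), so that for all sufficiently small $t$ the matrix $B_t$ still has $d$ distinct eigenvalues $\lambda_1(t) > \dots > \lambda_d(t)$, with $\lambda_j(t) \to \lambda_j$ for each $j$. Hence $L(B_t) - L(A)$ is the diagonal matrix with $j$-th diagonal entry $\lambda_j(t) - \lambda_j$, and it suffices to prove that $t^{-1}(\lambda_j(t) - \lambda_j) \to u_j^\top H u_j$ for each $j \in \{1,\dots,d\}$, where $u_j$ denotes the $j$-th column of $U$; the resulting limiting diagonal matrix is then precisely $D_{U^\top H U}$.

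Next I would eliminate the dependence of the perturbation direction on $t$. By Weyl's perturbation inequality, $\abs{\lambda_j(A + tH_t) - \lambda_j(A + tH)} \le \norm{tH_t - tH}_{\mathrm{op}} = t\,\norm{H_t - H}_{\mathrm{op}}$, and the right-hand side is $\oh(t)$ since entrywise convergence of $H_t$ to $H$ implies convergence in operator norm in finite dimension. So it is enough to establish the claim for the constant family $H_t \equiv H$.

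For a fixed direction $H$, I would invoke the classical first-variation formula for a simple eigenvalue: because $\lambda_j$ is a simple eigenvalue of $A$, the implicit function theorem applied to the system $(A + tH)v = \lambda v$, $v^\top v = 1$ (whose relevant Jacobian is invertible exactly because $\lambda_j$ is simple), or equivalently the analytic perturbation theorems of Rellich and Kato, yields differentiable branches $t \mapsto (\lambda_j(t), u_j(t))$ near $t = 0$ with $u_j(0) = u_j$ and $\norm{u_j(t)} = 1$. Differentiating $(A + tH)u_j(t) = \lambda_j(t)u_j(t)$ at $t = 0$ gives $H u_j + A \dot u_j(0) = \dot\lambda_j(0) u_j + \lambda_j \dot u_j(0)$; taking the inner product with $u_j$ and using $u_j^\top A \dot u_j(0) = (A u_j)^\top \dot u_j(0) = \lambda_j u_j^\top \dot u_j(0)$ (symmetry of $A$) cancels the eigenvector-derivative terms and leaves $\dot\lambda_j(0) = u_j^\top H u_j = (U^\top H U)_{jj}$. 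Collecting the $d$ coordinates yields $\dot L_A(H) = D_{U^\top H U}$.

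The only genuinely delicate point is reconciling the varying perturbation direction $H_t$ with the differentiability statement, which is precisely why the Weyl-inequality reduction is carried out first; everything that follows is the standard simple-eigenvalue perturbation computation, whose validity relies on the hypothesis that $A$ has $d$ distinct eigenvalues.
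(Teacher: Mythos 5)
Your proof is correct, but it takes a different route from the paper: the paper disposes of this lemma in one line by citing Theorem~3.3 of Hiriart-Urruty and Lewis (1999), which gives the directional (Hadamard-type) derivative of the ordered-eigenvalue map of a symmetric matrix in full generality, including the case of repeated eigenvalues where the derivative is no longer linear. Your argument instead builds the statement from scratch in the only case the lemma needs (all eigenvalues of $A$ simple): you first use the $1$-Lipschitz property of each ordered eigenvalue in the operator norm (Weyl's inequality) to replace the moving direction $H_t$ by the fixed direction $H$ up to an $\oh(t)$ error, and then run the classical simple-eigenvalue perturbation computation (implicit function theorem or Rellich--Kato branches, differentiate $(A+tH)u_j(t)=\lambda_j(t)u_j(t)$, pair with $u_j$, use symmetry of $A$ to cancel the eigenvector-derivative terms) to get $\dot\lambda_j(0)=u_j^\top H u_j=(U^\top H U)_{jj}$; together with the persistence of the strict ordering for small $t$ this yields exactly $D_{U^\top H U}$. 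Both the Weyl reduction and the first-variation formula are standard and correctly executed, and you rightly flag that the simplicity of the eigenvalues is what makes the implicit-function-theorem step work. What your approach buys is a self-contained and elementary proof; what the paper's citation buys is brevity and a statement that also covers the repeated-eigenvalue case, which the authors invoke informally elsewhere when explaining why the derivative ceases to be linear there.
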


\begin{proof}
	This is a special case of Theorem~3.3 in \citet{hiriarturrutylewis1999}.
\end{proof}

\begin{lem}
	\label{lem: D1Denom}
	Under the conditions of Theorem~\ref{thm: DiffDep1}, it holds that
	\begin{equation*}
	\lim_{t \downarrow 0}
	t^{-1} \left( 
		\trace \bigl( (\Sigma + t H_{t})_m^{1/2} \bigr) 
		- 
		\trace( \Sigma_m^{1/2} ) 
	\right)
	= \frac{1}{2} \trace( \Upsilon_1 H ),
	\end{equation*}
	with $(\Sigma + t H_t)_m$ the matrix in~\eqref{eq:Sigma_m} for $\Sigma$ replaced by $\Sigma + t H_t$ and with $\Upsilon_1$ defined in~\eqref{eq:Upsilon1}.
\end{lem}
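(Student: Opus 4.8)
The plan is to reduce the statement to the differentiation of eigenvalues, exploiting that $\trace\bigl((\Sigma+tH_t)_m^{1/2}\bigr)$ depends on $\Sigma+tH_t$ only through the eigenvalues of its two diagonal blocks. Write $\Sigma_j(t)=\Pi_j(\Sigma+tH_t)\Pi_j^\top$ and let $\lambda_{1,j}(t)\ge\cdots\ge\lambda_{d,j}(t)\ge0$ be the ordered eigenvalues of $\Sigma_j(t)$, padded with zeros (so $\lambda_{i,1}(t)=0$ for $i>p$ and $\lambda_{i,2}(t)=0$ for $i>q$). Assuming $q\ge p$ as in the surrounding set-up, Theorem~\ref{thm:Sigmam} gives
\[
	\trace\bigl((\Sigma+tH_t)_m^{1/2}\bigr)
	= \sum_{i=1}^{p}\bigl(\lambda_{i,1}(t)+\lambda_{i,2}(t)\bigr)^{1/2}
	+ \sum_{i=p+1}^{q}\bigl(\lambda_{i,2}(t)\bigr)^{1/2},
\]
and likewise for $t=0$, so it remains to differentiate the right-hand side at $t\downarrow0$.

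First I would handle the eigenvalues. Since $\Sigma\in\mathbb{S}^d_>$, the principal submatrices $\Sigma_1=\Sigma_1(0)$ and $\Sigma_2=\Sigma_2(0)$ lie in $\mathbb{S}^p_>$ and $\mathbb{S}^q_>$ and, by assumption, have distinct eigenvalues; hence $\Sigma_j(t)$ has distinct positive eigenvalues for $t$ small. Applying Lemma~\ref{lem: DifEig} with $A=\Sigma_j$ and perturbation $\Pi_j H_t\Pi_j^\top\to\Pi_j H\Pi_j^\top$ shows that $t^{-1}(\lambda_{i,j}(t)-\lambda_{i,j})\to(U_j^\top\Pi_j H\Pi_j^\top U_j)_{ii}$ as $t\downarrow0$, for $i$ up to the dimension of $\Sigma_j$, while the padded entries $\lambda_{i,1}(t)\equiv0$ ($i>p$) are constant and have derivative $0$. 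Only eigenvalue perturbation is used, not eigenvector perturbation, so the non-uniqueness of the eigendecomposition is harmless. As every eigenvalue sum in the display is strictly positive, $x\mapsto x^{1/2}$ is differentiable there, and the chain rule applied to the finite sum yields that $\lim_{t\downarrow0}t^{-1}\bigl(\trace((\Sigma+tH_t)_m^{1/2})-\trace(\Sigma_m^{1/2})\bigr)$ equals
\[
	\frac12\sum_{i=1}^{p}\frac{(U_1^\top\Pi_1 H\Pi_1^\top U_1)_{ii}+(U_2^\top\Pi_2 H\Pi_2^\top U_2)_{ii}}{(\lambda_{i,1}+\lambda_{i,2})^{1/2}}
	+ \frac12\sum_{i=p+1}^{q}\frac{(U_2^\top\Pi_2 H\Pi_2^\top U_2)_{ii}}{\lambda_{i,2}^{1/2}}.
\]

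Finally I would recognise this as $\tfrac12\trace(\Upsilon_1 H)$. Since $\Upsilon_1$ in~\eqref{eq:Upsilon1} is block-diagonal, $\trace(\Upsilon_1 H)=\trace(U_1\Delta_1 U_1^\top\,\Pi_1 H\Pi_1^\top)+\trace(U_2\Delta_2 U_2^\top\,\Pi_2 H\Pi_2^\top)$, and cycling the trace turns each summand into $\trace(\Delta_j\,U_j^\top\Pi_j H\Pi_j^\top U_j)=\sum_i(\Delta_j)_{ii}(U_j^\top\Pi_j H\Pi_j^\top U_j)_{ii}$ because $\Delta_j$ is diagonal. By~\eqref{eq:Deltar}, $(\Delta_1)_{ii}=(\lambda_{i,1}+\lambda_{i,2})^{-1/2}$ for $i\le p$, while $\Delta_2$ has diagonal $(\lambda_{i,1}+\lambda_{i,2})^{-1/2}$ for $i\le p$ and $\lambda_{i,2}^{-1/2}$ for $p<i\le q$; matching these against the previous display proves the lemma. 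The main obstacle is not analytic but organisational: one must keep track of the zero-padding of the eigenvalues and of the index threshold $i=p$ (which is precisely why $q\ge p$ is assumed), so that the padded eigenvalues genuinely drop out of the derivative and the two diagonal blocks of $\Delta_2$ are stitched together correctly. Lemma~\ref{lem: DifEig} already absorbs the moving perturbation $H_t\to H$, so no further uniformity argument is needed.
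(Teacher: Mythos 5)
Your proposal is correct and follows essentially the same route as the paper's proof: write $\trace(\Sigma_m^{1/2})$ as a function of the ordered (zero-padded) eigenvalues of the two diagonal blocks, differentiate the eigenvalues via Lemma~\ref{lem: DifEig}, apply the chain rule to the square roots, and rewrite the resulting sums as $\tfrac12\trace(\Upsilon_1 H)$ using the block-diagonal form of $\Upsilon_1$ and cyclic trace identities. The only cosmetic difference is that you split the sum explicitly at the index $p$, whereas the paper keeps a single sum with the convention $\lambda_{j,1}=0$ for $j>p$.
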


\begin{proof}
	The diagonal elements of the diagonal matrix $L(\Sigma_r) = \Lambda_r$ are $\lambda_{1,1} \ge \ldots \ge \lambda_{p,1}$ for $r = 1$ and $\lambda_{1,2} \ge \ldots \ge \lambda_{q,2}$ for $r = 2$. We need to deal with the term
	\begin{equation}
	\label{eq:trB} 
	\trace(\Sigma_m^{1/2})
	= \sum_{j=1}^{q} (\lambda_{j,1} + \lambda_{j,2})^{1/2}
	=: g(\Lambda_1, \Lambda_2),
	\end{equation}
	where $\lambda_{j,1} = 0$ if $j \in \{p+1,\ldots,q\}$ (recall $q \ge p$). 
	Similarly,
	\[ 
		\trace \bigl( (\Sigma + tH_t)_m^{1/2} \bigr) 
		= g\bigl( L(\Sigma_1 + t H_{t,11}), L(\Sigma_2 + t H_{t,22}) \bigr),
	\]
	where $H_{t,11}$ and $H_{t,22}$ are the upper $p \times p$ and lower $q \times q$ diagonal blocks of $H_t$.
	In view of Lemma~\ref{lem: DifEig} and the differentiability of~$g$ in~\eqref{eq:trB}, the chain rule gives
	\begin{multline*}
	\lim_{t \downarrow 0} t^{-1} \left( 
	g\bigl(L(\Sigma_1 + t H_{t,11}), L(\Sigma_2 + t H_{t,22})\bigr) 
	- 
	g(\Lambda_1, \Lambda_2) 
	\right)
	\\
	= \sum_{j=1}^{p}
	\frac{1}{2 (\lambda_{j,1} + \lambda_{j,2})^{1/2}} [U_1^\top H_{11} U_1]_{jj}
	+ \sum_{j=1}^{q}
	\frac{1}{2 (\lambda_{j,1} + \lambda_{j,2})^{1/2}} [U_2^\top H_{22} U_2]_{jj},
	\end{multline*}
	where $H_{11}$ and $H_{22}$ are the upper $p \times p$ and lower $q \times q$ diagonal blocks of $H$.
	The right-hand side can be simplified as follows:
	with $\Pi_1$ and $\Pi_2$ as in~\eqref{eq:Pi12},
	\begin{align*}
	\ldots
	&\stackrel{\text{(a)}}{=} 
	\frac{1}{2} \left(
	\trace(\Delta_1 U_1^\top H_{11} U_1) + \trace(\Delta_2 U_2^\top H_{22} U_2) 
	\right) %
	\stackrel{\text{(b)}}{=} 
	\frac{1}{2} \left(
	\trace(\Pi_1^\top U_1\Delta_1 U_1^\top \Pi_1 H  ) 
	+ \trace(\Pi_2^\top U_2\Delta_2 U_2^\top \Pi_2 H )
	\right) \\
	&\stackrel{\text{(c)}}{=} 
	\frac{1}{2} \trace \left( 
	\begin{bmatrix} 
	U_1 \Delta_1 U_1^\top & 0 \\ 
	0 & U_2 \Delta_2 U_2^\top 
	\end{bmatrix} 
	H 
	\right)
	= \frac{1}{2} \trace( \Upsilon_1 H ),
	\end{align*}
	using the following arguments:
	\begin{enumerate}[(a)]
		\item
		by the identity $\trace(A \diag(B)) = \trace(\diag(A) B)$ for square matrices $A$ and $B$;
		\item
		by the cyclic permutation property of the trace operator together with $H_{rr} = \Pi_r H \Pi_r^\top$ for $r \in \{1, 2\}$;
		\item 
		by the identity $\Pi_1^\top A_1 \Pi_1 + \Pi_2^\top A_2 \Pi_2 = \begin{bmatrix} A_1 & 0 \\ 0 & A_2 \end{bmatrix}$ for matrices $A_1$ and $A_2$ of dimensions $p \times p$ and $q \times q$, respectively.
		\qedhere
	\end{enumerate}
\end{proof}

The following lemma provides the Fréchet derivative of the squared $2$-Wasserstein distance~\eqref{eq:W2cor} between Gaussian distributions.
As explained in Remark~\ref{rmk:phi:deriv}, it rectifies the formula in Lemma~2.4 in \citet{rippl2016limit}.

\begin{lem}[Differentiability of the Bures--Wasserstein distance]
	\label{lem:phi:deriv}
	The Fréchet derivative of the map 
	\[
	\phi : (\mathbb{S}^d_>)^2 \to \reals : (A, B) \mapsto 2 \trace\bigl((A^{1/2} B A^{1/2})^{1/2}\bigr)
	\] at $(A, B) \in (\mathbb{S}^d_>)^2$ evaluated at $(G, H) \in (\mathbb{S}^d)^2$ is
	\begin{equation}
	\label{eq:phi:deriv}
		\lim_{t \downarrow 0} t^{-1} 
		\bigl( \phi(A + t G_t, B + t H_t) - \phi(A, B) \bigr)
		= \trace(J G) + \trace(J^{-1} H) =: D\phi_{(A, B)}(G, H)
	\end{equation}
	where $G_t, H_t \in \mathbb{S}^d$ for $t > 0$ are such that $G_t \to G$ and $H_t \to H$ element-wise as $t \downarrow 0$ and where
	\begin{equation}
	\label{eq:J}
		\begin{array}{rcccc}
		J 
		&=& A^{-1/2} (A^{1/2} B A^{1/2})^{1/2} A^{-1/2} 
		&=& B^{1/2} (B^{1/2} A B^{1/2})^{-1/2} B^{1/2}, \\[1ex]
		J^{-1} 
		&=& A^{1/2} (A^{1/2} B A^{1/2})^{-1/2} A^{1/2} 
		&=& B^{-1/2} (B^{1/2} A B^{1/2})^{1/2} B^{-1/2}. 
		\end{array}
	\end{equation}
	As a consequence, the Fréchet derivative of the squared Bures--Wasserstein distance is
	\begin{equation}
	\label{eq:dW2:deriv}
		\lim_{t \downarrow 0} t^{-1} \bigl( d_W^2(A + t G_t, B + t H_t) - d_W^2(A, B) \bigr)
		= \trace\bigl( (I_d - J)G \bigr) + \trace\bigl( (I_d - J^{-1}) H \bigr).
	\end{equation}

\end{lem}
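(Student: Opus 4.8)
The plan is to reduce \eqref{eq:dW2:deriv} to \eqref{eq:phi:deriv} and then to compute the Fréchet derivative of $\phi$ one argument at a time, exploiting the symmetry of $\phi$ in $A$ and $B$. First, since $d_W^2(A,B) = \trace(A) + \trace(B) - \phi(A,B)$ by \eqref{eq:W2cor} and $t \mapsto \trace(A+tG_t)$ has derivative $\trace(G)$ at $0$ (likewise for $B$), equation \eqref{eq:dW2:deriv} is an immediate consequence of \eqref{eq:phi:deriv}. So it suffices to establish \eqref{eq:phi:deriv} together with the identities \eqref{eq:J}.

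Next I would record \eqref{eq:J}. The matrix $J = A^{-1/2}(A^{1/2}BA^{1/2})^{1/2}A^{-1/2}$ satisfies $JAJ = B$, and the substitution $K = A^{1/2}JA^{1/2}$ forces $K^2 = A^{1/2}BA^{1/2}$, so $K = (A^{1/2}BA^{1/2})^{1/2}$ and the positive definite solution of $XAX=B$ is unique; since $B^{1/2}(B^{1/2}AB^{1/2})^{-1/2}B^{1/2}$ is positive definite and also solves $XAX = B$, it equals $J$. The analogous statement for $J^{-1}$ (the unique positive definite solution of $XBX = A$, with its two closed forms) follows by interchanging $A$ and $B$. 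I would also note that $A^{1/2}BA^{1/2} = (A^{1/2}B^{1/2})(A^{1/2}B^{1/2})^\top$ and $B^{1/2}AB^{1/2} = (A^{1/2}B^{1/2})^\top(A^{1/2}B^{1/2})$ have the same spectrum, so $\trace\bigl((A^{1/2}BA^{1/2})^{1/2}\bigr) = \trace\bigl((B^{1/2}AB^{1/2})^{1/2}\bigr)$ and $\phi$ is symmetric in its two arguments.

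For the partial derivative in $B$ (with $A$ fixed), I would apply \eqref{eq:Tdiff:simple} to $t \mapsto \trace\bigl((A^{1/2}(B+tH_t)A^{1/2})^{1/2}\bigr)$, whose perturbed inner matrix is $A^{1/2}BA^{1/2} + tA^{1/2}H_tA^{1/2}$ with limiting direction $A^{1/2}HA^{1/2}$; then the limit equals $\trace\bigl((A^{1/2}BA^{1/2})^{-1/2}A^{1/2}HA^{1/2}\bigr) = \trace\bigl(A^{1/2}(A^{1/2}BA^{1/2})^{-1/2}A^{1/2}H\bigr) = \trace(J^{-1}H)$ by cyclicity of the trace and the first formula for $J^{-1}$ in \eqref{eq:J}. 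By the symmetry established above, the partial derivative in $A$ is obtained from the same computation applied to $2\trace\bigl((B^{1/2}AB^{1/2})^{1/2}\bigr)$ and equals $\trace(JG)$, now using the second formula for $J$. Finally, $\phi$ is real-analytic on $(\mathbb{S}^d_>)^2$, being a composition of $X \mapsto X^{1/2}$ (analytic on the positive cone, as in the proof of Lemma~\ref{lem:sqrt:deriv}), matrix multiplication, and $\trace$; hence it is Fréchet differentiable with derivative the sum of the two partials, and continuity of the derivative (or, equivalently, carrying $G_t, H_t$ through each application of \eqref{eq:Tdiff:simple}) handles the perturbed directions $G_t \to G$, $H_t \to H$. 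This yields \eqref{eq:phi:deriv}, and combined with the first step, \eqref{eq:dW2:deriv}.

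The one place that demands genuine care is matching the closed forms of $J$ and $J^{-1}$ inside the traces: verifying that the two displayed expressions in \eqref{eq:J} coincide (the uniqueness argument above) and keeping track of the fact that it is $J$ that multiplies $G$ and $J^{-1}$ that multiplies $H$, rather than the reverse. Everything else is routine manipulation with symmetric matrices and the cyclic trace identity.
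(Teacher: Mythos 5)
Your proposal is correct, and it reaches the result by a route that differs from the paper's in two places. For the identities \eqref{eq:J} the paper verifies them directly, by squaring the candidate matrix $A^{1/2}B^{1/2}(B^{1/2}AB^{1/2})^{-1/2}B^{1/2}A^{1/2}$ to recognise it as $(A^{1/2}BA^{1/2})^{1/2}$ and then pre- and post-multiplying; you instead characterise $J$ as the unique positive definite solution of $XAX=B$ and check that both closed forms solve that equation, which is equally valid and arguably more conceptual (it is also how the paper interprets $J$ in Remark~\ref{rmk:matcon}). For the derivative itself, the paper performs a single joint expansion of $(A+tG_t)^{1/2}(B+tH_t)(A+tG_t)^{1/2}$ using the (implicit, Sylvester-equation) derivative $D\psi_A(G)$ of the square root, reads off the $H$-term as $\trace(J^{-1}H)$, and disposes of the unwieldy $G$-terms by the symmetry $D\phi_{(A,B)}(G,H)=D\phi_{(B,A)}(H,G)$; you compute the two partial derivatives separately through \eqref{eq:Tdiff:simple} (the $B$-partial directly, the $A$-partial by first using $\phi(A,B)=\phi(B,A)$, which you justify cleanly via the common spectrum of $A^{1/2}BA^{1/2}$ and $B^{1/2}AB^{1/2}$), and then invoke smoothness/analyticity of $\phi$ on the open cone $(\mathbb{S}^d_>)^2$ to conclude that the total Fréchet derivative is the sum of the partials and that the convergence holds along perturbed directions $G_t\to G$, $H_t\to H$. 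Your route avoids ever touching $D\psi_A(G)$, at the cost of the extra (standard, and correctly flagged) step that continuous partial differentiability yields joint Fréchet differentiability, and that Fréchet differentiability in finite dimensions gives the Hadamard-type limit with moving directions; the paper's joint expansion delivers both of these automatically but must argue by symmetry that the uncomputed $G$-terms collapse to $\trace(JG)$. Both arguments keep $J$ with $G$ and $J^{-1}$ with $H$, as you rightly emphasise is the delicate bookkeeping point, and the reduction of \eqref{eq:dW2:deriv} to \eqref{eq:phi:deriv} via $d_W^2(A,B)=\trace(A)+\trace(B)-\phi(A,B)$ is the same in both.
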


The matrices $J$ and $J^{-1}$ in~\eqref{eq:J} are the unique solutions in $\mathbb{S}^d_>$ to the matrix equations $J A J = B$ and $J^{-1} B J^{-1} = A$.
They operationalize the optimal couplings between $\normal_d(0, A)$ and $\normal_d(0, B)$ with the squared Euclidean distance as cost function \citep{olkin1982distance}.

\begin{proof}
	Equation~\eqref{eq:dW2:deriv} is an immediate consequence of \eqref{eq:phi:deriv} and the linearity of the trace operator.
	So it suffices to show~\eqref{eq:phi:deriv}.
	
	We start by showing the two identities following the definitions of $J$ and $J^{-1}$. A direct calculation gives
	\[
	\left( 
	A^{1/2} B^{1/2} (B^{1/2} A B^{1/2})^{-1/2} B^{1/2} A^{1/2} 
	\right)^2
	= A^{1/2} B A^{1/2}.
	\]
	Since the left-hand side is the square of a symmetric matrix, we find
	\begin{equation}
	\label{eq:abracadabra}
	A^{1/2} B^{1/2} (B^{1/2} A B^{1/2})^{-1/2} B^{1/2} A^{1/2}
	= (A^{1/2} B A^{1/2})^{1/2}.
	\end{equation}
	Pre- and post-multiply with $A^{1/2}$ to find
	\[
	B^{1/2} (B^{1/2} A B^{1/2})^{-1/2} B^{1/2}
	= A^{-1/2} (A^{1/2} B A^{1/2})^{1/2} A^{-1/2}, 
	\]
	which is the identity following the definition of $J$.
	The identity following the definition of $J^{-1}$ follows in the same way, by changing the roles of $A$ and $B$.
	Note that, by~\eqref{eq:abracadabra} and the cyclic permatution property of the trace operator,
	\begin{align*}
	\phi(A, B)
	&= 2 \trace \bigl( (A^{1/2} B A^{1/2})^{1/2} \bigr) %
	= 2 \trace \bigl( A^{1/2} B^{1/2} (B^{1/2} A B^{1/2})^{-1/2} B^{1/2} A^{1/2} \bigr) \\
	&= 2 \trace \bigl( (B^{1/2} A B^{1/2})^{1/2} \bigr) 
	= \phi(B, A),
	\end{align*}
	confirming the symmetry of $\phi$.

	By Lemma~\ref{lem:sqrt:deriv}, we have, as $t \downarrow 0$,
	\begin{align*}
		(A + tG_t)^{1/2} (B + tH_t) (A + tG_t)^{1/2}
	&=
	\bigl(A^{1/2} + t D\psi_A(G) + \oh(t)\bigr)
	\bigl( B + tH + \oh(t) \bigr)
	\bigl(A^{1/2} + t D\psi_A(G) + \oh(t)\bigr) \\
	&=
	A^{1/2} B A^{1/2} + t \bigl( D\psi_A(G) B A^{1/2} + A^{1/2} H A^{1/2} + A^{1/2} B D\psi_A(G) \bigr) + \oh(t).
	\end{align*}
	In Eq.~\eqref{eq:Tdiff:simple}, we have calculated the Fréchet derivative of the map $\mathbb{S}^d_> \to \reals : C \mapsto 2 \trace(C^{1/2})$ to be the linear operator $\mathbb{S}^d \to \reals : K \mapsto \trace(C^{-1/2} K)$. 
	Therefore,
	\begin{align*}
	D\phi_{(A, B)}(G, H)
	=
	\trace \left(
	(A^{1/2} B A^{1/2})^{-1/2} \bigl(
	D\psi_A(G) B A^{1/2} + A^{1/2} H A^{1/2} + A^{1/2} B D\psi_A(G)
	\bigr)
	\right).
	\end{align*}
	Isolating the term involving $H$, we find $\trace(J^{-1} H)$, as required.
	It remains to deal with the terms involving $G$. By symmetry of $\phi$, we have $D\phi_{(A, B)}(G, H) = D\phi_{(B, A)}(H, G)$. 
	The terms involving $G$ must therefore simplify to become the term involving $H$ but with the roles of $A$ and $B$ reversed: this transformation leads from $J^{-1}$ to $J$.
\end{proof}

For a $d \times d$ matrix $A$ partitioned into blocks
\[
	A = \begin{bmatrix} A_{11} & A_{12} \\ A_{21} & A_{22} \end{bmatrix}
\]
of dimensions $p \times p$, $p \times q$, $q \times p$ and $q \times q$, respectively, we put
\begin{equation}
\label{eq:A0}
	A_0 = \begin{bmatrix} A_{11} & 0 \\ 0 & A_{22} \end{bmatrix},
\end{equation}
with zero off-diagonal blocks.
This notation is coherent with the one used for $\Sigma_0$ in~\eqref{eq:Sigma0} and for $J_0$ in~\eqref{eq:J0:Sigma}.

\begin{cor} 
	\label{cor: D2Num}
	The Fréchet derivative of the map 
	\[ 
		\eta : \mathbb{S}^d_> \to \reals : \Sigma \mapsto \trace\left(\bigl(\Sigma_0^{1/2} \Sigma \Sigma_0^{1/2}\bigr)^{1/2}\right)
	\]
	is given by
	\[
		\lim_{t \downarrow 0} t^{-1} \bigl( \eta(\Sigma + t H_t) - \eta(\Sigma) \bigr)
		= \tfrac{1}{2} \trace\bigl( (J_0 + J^{-1}) H \bigr)
	\]
	for $H_t, H \in \mathbb{S}^d$ such that $H_t \to H$ element-wise as $t \downarrow 0$, with $J$ and $J_0$ as in~\eqref{eq:J:Sigma} and~\eqref{eq:J0:Sigma}, respectively.
\end{cor}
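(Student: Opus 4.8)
The plan is to recognise $\eta$ as a composition of a linear map with the map $\phi$ whose Fréchet derivative is already computed in Lemma~\ref{lem:phi:deriv}, and then apply the chain rule. First I would observe that, with $\phi$ as in Lemma~\ref{lem:phi:deriv} and with $\Sigma_0$ obtained from $\Sigma$ by the rule~\eqref{eq:A0}, we have $\eta(\Sigma) = \tfrac{1}{2}\phi(\Sigma_0, \Sigma)$. Since $\Sigma \in \mathbb{S}^d_>$, each diagonal block $\Sigma_1 = \Pi_1 \Sigma \Pi_1^\top$ and $\Sigma_2 = \Pi_2 \Sigma \Pi_2^\top$ is positive definite, hence $\Sigma_0 \in \mathbb{S}^d_>$ and Lemma~\ref{lem:phi:deriv} is applicable at $(\Sigma_0, \Sigma)$.

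Next, the map $\Sigma \mapsto \Sigma_0$ is linear, so $(\Sigma + tH_t)_0 = \Sigma_0 + t(H_t)_0$, and $(H_t)_0 \to H_0$ element-wise as $t \downarrow 0$ because $H_t \to H$ element-wise. Applying Lemma~\ref{lem:phi:deriv} with $A = \Sigma_0$, $B = \Sigma$, $G_t = (H_t)_0$, $G = H_0$, and $H_t, H$ as given, I obtain
\[
	\lim_{t \downarrow 0} t^{-1}\bigl( \eta(\Sigma + tH_t) - \eta(\Sigma) \bigr)
	= \tfrac{1}{2}\bigl( \trace(J H_0) + \trace(J^{-1} H) \bigr),
\]
where $J$ is~\eqref{eq:J} evaluated at $A = \Sigma_0$, $B = \Sigma$, which is exactly the matrix in~\eqref{eq:J:Sigma}.

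It then remains to identify $\trace(J H_0)$ with $\trace(J_0 H)$. Writing $J = \bigl[\begin{smallmatrix} J_{11} & J_{12} \\ J_{21} & J_{22} \end{smallmatrix}\bigr]$ with blocks of dimensions $p \times p$, $p \times q$, $q \times p$, $q \times q$ and $H_0 = \bigl[\begin{smallmatrix} H_{11} & 0 \\ 0 & H_{22} \end{smallmatrix}\bigr]$, a direct block multiplication gives $\trace(J H_0) = \trace(J_{11} H_{11}) + \trace(J_{22} H_{22})$; the same expression arises for $\trace(J_0 H)$ since the off-diagonal blocks of $H$ get multiplied by the zero blocks of $J_0$ and do not contribute to the trace. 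Hence $\trace(J H_0) = \trace(J_0 H)$, and by linearity of the trace the claimed identity $\tfrac{1}{2}\trace\bigl((J_0 + J^{-1})H\bigr)$ follows after isolating $H$.

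I do not expect any genuine obstacle here: the argument is just a chain rule together with the elementary block-trace identity above. The only points deserving a word of care are that $\Sigma_0$ is positive definite (so that Lemma~\ref{lem:phi:deriv} applies), and that the element-wise convergence $(H_t)_0 \to H_0$ needed to invoke that lemma is inherited from $H_t \to H$.
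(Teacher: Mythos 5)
Your proposal is correct and follows essentially the same route as the paper's proof: apply Lemma~\ref{lem:phi:deriv} with $A = \Sigma_0$, $B = \Sigma$, $G_t = (H_t)_0$, $G = H_0$, and then identify $\trace(J H_0) = \trace(J_0 H)$. The extra checks you include (positive definiteness of $\Sigma_0$, linearity of $\Sigma \mapsto \Sigma_0$, and the block-trace identity spelled out) are correct and only make explicit what the paper leaves implicit.
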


\begin{proof}
	We apply Lemma~\ref{lem:phi:deriv} with $A = \Sigma_0$, $B = \Sigma$, and, following the convention in~\eqref{eq:A0}, $G_t = (H_{t})_0$ as well as $G = H_0$ obtained from $H_t$ and $H$, respectively.
	The limit is equal to $\frac{1}{2} (\trace(J H_0) + \trace(J^{-1} H))$ with $J$ as in~\eqref{eq:J:Sigma}.
	Now %
	$\trace(J H_0) = \trace(J_0 H)$ in view of~\eqref{eq:trace}.
\end{proof}

It remains to treat the last term in the denominator in the expression for $\dep_2(\Sigma)$ in Proposition~\ref{prop:dep:Gauss}.
This is not particularly involved in the light of the earlier developments.

\begin{lem}
	\label{lem: D2Denom}
	Under the conditions of Theorem~\ref{thm: DiffDep2}, it holds that
	\begin{equation*}
	\lim_{t \downarrow 0}
	t^{-1} \trace \left(
		\bigl(
			(\Sigma + t H_{t})_0^{1/2}
			(\Sigma + t H_{t})_m 
			(\Sigma + t H_{t})_0^{1/2}
		\bigr)^{1/2}
		-
		\bigl(
			\Sigma_0^{1/2} \Sigma_m \Sigma_0^{1/2}
		\bigr)^{1/2} 
	\right)
	=  \trace (\Upsilon_2 H),
	\end{equation*}
	with $(\Sigma + t H_t)_0$ as in \eqref{eq:A0}, with $(\Sigma + t H_t)_m$ the matrix in~\eqref{eq:Sigma_m} for $\Sigma$ replaced by $\Sigma + t H_t$, and with $\Upsilon_2$ defined in~\eqref{eq:Upsilon2}.
\end{lem}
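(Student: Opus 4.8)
The plan is to adapt, almost verbatim, the proof of Lemma~\ref{lem: D1Denom}; the only change is that a square now appears underneath the outer square root. As elsewhere, assume $q \ge p$. First I would recall from the proof of Proposition~\ref{prop:max}(ii) that $\Sigma_0^{1/2}\Sigma_m\Sigma_0^{1/2}$ is again a matrix of the form treated in Theorem~\ref{thm:Sigmam}, but with diagonal blocks $\Sigma_1^2$ and $\Sigma_2^2$ in place of $\Sigma_1$ and $\Sigma_2$; hence its eigenvalues are $(\lambda_{j,1}^2 + \lambda_{j,2}^2)_{j=1}^d$, with the convention $\lambda_{j,1} = 0$ for $j > p$ and $\lambda_{j,2} = 0$ for $j > q$. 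Consequently
\[
\trace\bigl( (\Sigma_0^{1/2}\Sigma_m\Sigma_0^{1/2})^{1/2} \bigr)
= \sum_{j=1}^q (\lambda_{j,1}^2 + \lambda_{j,2}^2)^{1/2}
=: h(\Lambda_1, \Lambda_2),
\]
which, since $\Sigma_1$ and $\Sigma_2$ are positive definite with simple spectra (so that the perturbed diagonal blocks $(\Sigma + tH_t)_1$ and $(\Sigma + tH_t)_2$ remain so for $t$ near $0$), is a continuously differentiable function of the eigenvalue matrices $\Lambda_1 = L(\Sigma_1)$ and $\Lambda_2 = L(\Sigma_2)$ near the present point. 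Running the same computation for $\Sigma + tH_t$ gives
\[
\trace\bigl( ((\Sigma + tH_t)_0^{1/2}(\Sigma + tH_t)_m(\Sigma + tH_t)_0^{1/2})^{1/2} \bigr)
= h\bigl( L(\Sigma_1 + tH_{t,11}),\, L(\Sigma_2 + tH_{t,22}) \bigr),
\]
where $H_{t,11}, H_{t,22}$ are the upper $p \times p$ and lower $q \times q$ diagonal blocks of $H_t$.

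Then I would differentiate by the chain rule. By Lemma~\ref{lem: DifEig} one has $\lim_{t\downarrow 0} t^{-1}(L(\Sigma_r + tH_{t,rr}) - \Lambda_r) = D_{U_r^\top H_{rr} U_r}$ for $r \in \{1,2\}$, while the partial derivatives of $h$ are
\[
\frac{\partial h}{\partial \lambda_{j,1}} = \frac{\lambda_{j,1}}{(\lambda_{j,1}^2 + \lambda_{j,2}^2)^{1/2}}\ \ (j \le p),
\qquad
\frac{\partial h}{\partial \lambda_{j,2}} = \frac{\lambda_{j,2}}{(\lambda_{j,1}^2 + \lambda_{j,2}^2)^{1/2}}\ \ (j \le q),
\]
the factor $\tfrac12$ from the outer square root being cancelled by the $2$ coming from the inner square; for $j > p$ the second expression equals $1$. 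These are precisely the diagonal entries of the matrices $\Delta_1'$ and $\Delta_2'$ introduced before Theorem~\ref{thm: DiffDep2} (the block $I_{q-p}$ of $\Delta_2'$ corresponding to the indices $j > p$). Hence the limit in the lemma equals
\[
\sum_{j=1}^p [\Delta_1']_{jj}\,[U_1^\top H_{11} U_1]_{jj} + \sum_{j=1}^q [\Delta_2']_{jj}\,[U_2^\top H_{22} U_2]_{jj}
= \trace(\Delta_1' U_1^\top H_{11} U_1) + \trace(\Delta_2' U_2^\top H_{22} U_2),
\]
with $H_{11}, H_{22}$ the diagonal blocks of $H$, the last step using $\trace(A \diag(B)) = \trace(\diag(A) B)$.

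Finally I would simplify this exactly as in steps~(a)--(c) of the proof of Lemma~\ref{lem: D1Denom}: writing $H_{rr} = \Pi_r H \Pi_r^\top$, using the cyclic property of the trace, and the block identity $\Pi_1^\top A_1 \Pi_1 + \Pi_2^\top A_2 \Pi_2 = \begin{bmatrix} A_1 & 0 \\ 0 & A_2 \end{bmatrix}$, the displayed quantity becomes $\trace(\Upsilon_2 H)$ with $\Upsilon_2$ as in~\eqref{eq:Upsilon2}, which is the claim. I do not expect a genuine obstacle here; the one place that needs care is the opening step---confirming that $(\Sigma + tH_t)_0^{1/2}(\Sigma + tH_t)_m(\Sigma + tH_t)_0^{1/2}$ really inherits the ``squared-eigenvalue'' structure of Theorem~\ref{thm:Sigmam}, so that the whole trace is a smooth function of the eigenvalues of the two perturbed diagonal blocks alone, together with the continuity argument ensuring those blocks stay positive definite with distinct eigenvalues for small $t$, so that Lemma~\ref{lem: DifEig} indeed applies.
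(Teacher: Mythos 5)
Your proposal is correct and follows essentially the same route as the paper: the paper also writes the trace as $h(\Lambda_1,\Lambda_2)=\sum_{j}(\lambda_{j,1}^2+\lambda_{j,2}^2)^{1/2}$ (citing Proposition~\ref{prop:dep:Gauss}, which rests on the same squared-eigenvalue structure of $\Sigma_0^{1/2}\Sigma_m\Sigma_0^{1/2}$ from Proposition~\ref{prop:max}(ii) that you re-derive) and then repeats the chain-rule argument of Lemma~\ref{lem: D1Denom} via Lemma~\ref{lem: DifEig}, arriving at $\trace(\Delta_1' U_1^\top H_{11} U_1)+\trace(\Delta_2' U_2^\top H_{22} U_2)=\trace(\Upsilon_2 H)$. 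You merely spell out the details the paper leaves implicit, including the correct cancellation of the factor $\tfrac12$ and the $I_{q-p}$ block of $\Delta_2'$.
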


\begin{proof}[Proof of Lemma \ref{lem: D2Denom}]
	The proof is similar to the one of Lemma~\ref{lem: D1Denom}, exploiting the eigenvalue map $L$ in Lemma~\ref{lem: DifEig}.
	Recall from Proposition~\ref{prop:dep:Gauss} that the trace of interest can be written as
	\[
		\trace\left( \bigl( \Sigma_0^{1/2} \Sigma_m \Sigma_0^{1/2} \bigr)^{1/2} \right)
		= \sum_{j=1}^{p \vee q} (\lambda_{j,1}^2 +  \lambda_{j,2}^2)^{1/2}
		=: h(\Lambda_1, \Lambda_2).
	\]
	This expression is similar to the one for $\trace(\Sigma_m^{1/2})$ in~\eqref{eq:trB}, so that one can see, using the same arguments and the same notation, that 
	\begin{align*}
	\lefteqn{
		\lim_{t \downarrow 0}
		t^{-1} \Bigl( 
			h \bigl( L(\Sigma_1 + t H_{11}), L(\Sigma_2 + t H_{22}) \bigr) 
			- 
			h(\Lambda_1, \Lambda_2) 
		\Bigr) 
	} \\
	&= \sum_{j=1}^{p} 
	\frac{\lambda_{j,1}}{ (\lambda_{j,1}^2 + \lambda_{j,2}^2)^{1/2}}
	 [U_1^\top H_{11} U_1]_{jj} 
	 +
	\sum_{j=1}^{q} 
	\frac{\lambda_{j,2}}{ (\lambda_{j,1}^2 + \lambda_{j,2}^2)^{1/2}}
	 [U_2^\top H_{22} U_2]_{jj} \\
	&=  \trace(\Delta_1' U_1^\top H_{11} U_1) + \trace(\Delta_2' U_2^\top H_{22} U_2) \\
	&= \trace(\Upsilon_2 H). \qedhere
	\end{align*}
\end{proof}

\begin{lem}[Empirical covariance matrix, standard Gaussian case]
	\label{lem:Wn:asym}
	Let $\epsilon_1, \ldots, \epsilon_n$ be independent $\normal_d(0, I_d)$ random vectors and let
	\begin{equation}
	\label{eq:Wn}
		W_n = \frac{1}{\sqrt{n}} \sum_{i=1}^n (\epsilon_i \epsilon_i^\top - I_d).
	\end{equation}
	Then $W_n \dto W$ as $n \to \infty$, with $W$ a random symmetric matrix such that 
	\begin{equation*}
	W_{jk} \sim 
	\begin{cases} 
	\normal(0, 2), & \text{for $j = k \in \{1, \ldots, d\}$,} \\
	\normal(0, 1), & \text{for $1 \le j < k \le d$,}
	\end{cases}
	\end{equation*}
	all entries being independent (except for the symmetry of $W$). 
	For $A, B \in \mathbb{S}^d$, we have
	\begin{equation}
	\label{eq:covAB}
	\expec[ \trace(A W) \trace(B W) ] = 2 \trace(A B).
	\end{equation}
\end{lem}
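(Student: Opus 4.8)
The plan is to obtain the weak convergence from the multivariate central limit theorem and then to read off the covariance structure of the Gaussian limit via Isserlis' formula, the identity \eqref{eq:covAB} being a direct moment computation with the resulting law. First I would observe that the summands $Y_i := \epsilon_i \epsilon_i^\top - I_d$, $i = 1, \ldots, n$, are i.i.d.\ random elements of $\mathbb{S}^d$ with $\expec[Y_i] = 0$ (since $\expec[\epsilon_i\epsilon_i^\top] = I_d$) and with finite second moments, the fourth moments of a Gaussian vector being finite. Identifying $\mathbb{S}^d$ with $\reals^{d(d+1)/2}$ through $\vech$, the multivariate CLT applied to $n^{-1/2} \sum_{i=1}^n \vech(Y_i)$ together with the continuous mapping theorem yields $W_n \dto W$ as $n \to \infty$, where $W$ is a centred Gaussian random symmetric matrix whose entrywise covariances coincide with those of a single $\epsilon_1\epsilon_1^\top$, i.e.\ $\cov(W_{jk}, W_{lm}) = \cov\bigl((\epsilon_1\epsilon_1^\top)_{jk}, (\epsilon_1\epsilon_1^\top)_{lm}\bigr)$ for all $j,k,l,m \in \{1,\ldots,d\}$.

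Next I would compute these covariances. Writing $\epsilon_1 = (\epsilon_1^{(1)}, \ldots, \epsilon_1^{(d)})$ with i.i.d.\ $\normal(0,1)$ coordinates, Isserlis' (Wick's) formula gives $\expec[\epsilon_1^{(j)}\epsilon_1^{(k)}\epsilon_1^{(l)}\epsilon_1^{(m)}] = \delta_{jk}\delta_{lm} + \delta_{jl}\delta_{km} + \delta_{jm}\delta_{kl}$, while $\expec[\epsilon_1^{(j)}\epsilon_1^{(k)}] = \delta_{jk}$; subtracting the product of the two second moments leaves
\[
	\cov(W_{jk}, W_{lm}) = \delta_{jl}\delta_{km} + \delta_{jm}\delta_{kl}.
\]
Specializing, $\var(W_{jj}) = 2$ and $\var(W_{jk}) = 1$ for $j \ne k$, while $W_{jk}$ and $W_{lm}$ are uncorrelated whenever $\{j,k\} \ne \{l,m\}$. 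Since $W$ is Gaussian, uncorrelatedness is independence, and this gives the stated joint law of $W$ (the upper-triangular entries being independent, the remaining ones determined by symmetry).

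Finally, for the identity \eqref{eq:covAB} I would compute directly with this law. Using the symmetry of $W$ we have $\trace(AW) = \sum_{i,j} A_{ij} W_{ij}$ and $\trace(BW) = \sum_{k,l} B_{kl} W_{kl}$, so that, by the covariance formula above (valid for all index quadruples, diagonal or not),
\[
	\expec[\trace(AW)\trace(BW)]
	= \sum_{i,j,k,l} A_{ij} B_{kl} (\delta_{ik}\delta_{jl} + \delta_{il}\delta_{jk})
	= \sum_{i,j} A_{ij} B_{ij} + \sum_{i,j} A_{ij} B_{ji}
	= 2 \trace(AB),
\]
the last step using that $B$ is symmetric. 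No genuine obstacle is anticipated here: the only points requiring care are the bookkeeping in Isserlis' formula and the appeal to joint Gaussianity to upgrade uncorrelatedness to independence. In particular, since the law of the limit $W$ is pinned down explicitly, \eqref{eq:covAB} needs no uniform-integrability argument to transfer moments across the weak limit; alternatively, one could compute $\expec[\trace(AW_n)\trace(BW_n)]$ for each $n$ and pass to the limit, but the route above is shorter.
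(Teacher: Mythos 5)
Your proposal is correct and follows essentially the same route as the paper: weak convergence via the multivariate central limit theorem, then a direct second-moment computation with the Gaussian limit to obtain \eqref{eq:covAB}. The only differences are cosmetic---you make explicit the Wick/Isserlis computation of the limiting covariances (which the paper leaves implicit in its appeal to the CLT) and you evaluate $\expec[\trace(AW)\trace(BW)]$ through the full covariance tensor $\cov(W_{jk},W_{lm})=\delta_{jl}\delta_{km}+\delta_{jm}\delta_{kl}$ rather than through the paper's split of $\trace(AW)$ into diagonal and off-diagonal terms, and both computations yield $2\trace(AB)$.
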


\begin{proof}[Proof of Lemma \ref{lem:Wn:asym}]
	The weak convergence $W_n \dto W$ with $W$ as stated is a direct consequence of the multivariate central limit theorem.
	For $A \in \Sd$, we have, by symmetry of $W$,
	\begin{equation*}
	\trace(A W) 
	= \sum_{j=1}^d \sum_{k=1}^d A_{jk} W_{jk} 
	= \sum_{j=1}^d A_{jj} W_{jj} + 2 \sum_{1 \le j < k \le d} A_{jk} W_{jk}.
	\end{equation*}
	Since the random variables appearing on the last line are independent and have zero mean, it follows that, for $A, B \in \Sd$,
	\begin{align*}
	\expec[\trace(A W) \trace(B W)]
	&= \sum_{j=1}^d A_{jj} B_{jj} \expec[W_{jj}^2]
	+ 4 \sum_{1 \le j < k \le d} A_{jk} B_{jk} \expec[W_{jk}^2] \\
	&= 2 \sum_{j=1}^d A_{jj} B_{jj}
	+ 4 \sum_{1 \le j < k \le d} A_{jk} B_{jk} \\
	&= 2 \sum_{j=1}^d \sum_{k=1}^d A_{jk} B_{jk} 
	= 2 \trace(A B). \qedhere
	\end{align*}
\end{proof}

\begin{lem}[Asymptotic expansion of correlation matrix estimates]
	\label{lem:Rn}
	Let $R = (\rho_{jk})_{j,k=1}^d$ be a $d \times d$ correlation matrix and let $R_n = (\rho_{n,jk})_{j,k=1}^d$ be either the empirical correlation matrix $\hat{R}_n$ in \eqref{eq:Rnhat} in the Gaussian distribution setting (GD) or the matrix $\check{R}_{n}$ in \eqref{eq:Rncheck} of normal scores rank correlation coefficients in the Gaussian copula setting (GC).
	In both cases, for $j, k \in \{1, \ldots, d\}$,
	\begin{equation}
	\label{eq:Rn:jk}
		\sqrt{n} (\rho_{n,jk} - \rho_{jk})
		= \frac{1}{\sqrt{n}} \sum_{i=1}^n \left( Z_{ij} Z_{ik} - \frac{1}{2} \rho_{jk} (Z_{ij}^2 + Z_{ik}^2) \right) + \oh_p(1), \qquad n \to \infty,
	\end{equation}
	or, in matrix form,
	\begin{equation}
	\label{eq:Rn:mat}
		\sqrt{n} (R_n - R) =
		\frac{1}{\sqrt{n}} \sum_{i=1}^n \dot{\varphi}_R(Z_i Z_i^\top) + \oh_p(1),
		\qquad n \to \infty
	\end{equation}
	with $\dot{\varphi}_R$ as in \eqref{eq:diffphi} and with $Z_1,\ldots,Z_n$ an independent random sample from $\normal_d(0, R)$.
\end{lem}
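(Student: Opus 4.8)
The plan is to treat settings~(GD) and~(GC) separately, reducing each to the same asymptotically linear expansion, and then to read off the $(j,k)$ entry. A preliminary remark is that, because $D_R = I_d$, the linear operator $\dot\varphi_R$ from~\eqref{eq:diffphi} acts entrywise as $[\dot\varphi_R(A)]_{jk} = A_{jk} - \tfrac12 \rho_{jk}(A_{jj}+A_{kk})$, so substituting $A = Z_iZ_i^\top$ shows that the matrix identity~\eqref{eq:Rn:mat} and the scalar identity~\eqref{eq:Rn:jk} are equivalent; it therefore suffices to prove~\eqref{eq:Rn:mat} in each setting.

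For setting~(GD), I would first use the scale invariance $\varphi(\Delta\Sigma\Delta) = \varphi(\Sigma)$ for diagonal $\Delta \in \mathbb{S}^d_>$, together with the location invariance of sample correlations, to assume without loss of generality that $\xi_i = Z_i$ with $Z_1,\dots,Z_n$ i.i.d.\ $\normal_d(0, R)$. Writing $\hat\Sigma_n = \tfrac{n}{n-1}\bigl(\tfrac1n\sum_{i=1}^n Z_iZ_i^\top - \bar Z_n \bar Z_n^\top\bigr)$ and using $\bar Z_n = \Oh_p(n^{-1/2})$ and $\tfrac{n}{n-1} = 1 + \Oh(n^{-1})$ gives $\sqrt n(\hat\Sigma_n - R) = \tfrac{1}{\sqrt n}\sum_{i=1}^n (Z_iZ_i^\top - R) + \oh_p(1)$, which is asymptotically Gaussian by the central limit theorem. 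The map $\varphi$ is Fréchet differentiable at the positive definite correlation matrix $R$ with derivative $\dot\varphi_R$ in~\eqref{eq:diffphi} (see the proof of Corollary~\ref{cor:depcorrFrechet}), so the delta method yields $\sqrt n(\hat R_n - R) = \dot\varphi_R\bigl(\tfrac{1}{\sqrt n}\sum_{i=1}^n (Z_iZ_i^\top - R)\bigr) + \oh_p(1)$. By linearity of $\dot\varphi_R$ and the identity $\dot\varphi_R(R) = R - \tfrac12(D_R R + R D_R) = 0$ (again because $D_R = I_d$), the centring term drops out, giving~\eqref{eq:Rn:mat}.

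For setting~(GC), the probit scores $Z_i = \bigl(\qnorm(F_j(\xi_{ij}))\bigr)_{j=1}^d$ are i.i.d.\ with the G-copula law of $(X,Y)$, which under the Gaussian copula assumption is exactly $\normal_d(0, R)$. The statistic $\check\rho_{n,jk}$ in~\eqref{eq:Rncheck} differs from the version built on the unobservable $Z_{ij}$ only through the use of the marginal empirical cdfs $\hat F_{nj}$ (with the factor $n/(n+1)$) and the normalising constant $\tfrac1n\sum_{i=1}^n \qnorm(\tfrac{i}{n+1})^2$, which converges to $\int_0^1 \qnorm(u)^2\,\diff u = 1$. Theorem~3.1 of \citet{klaassen1997efficient} establishes precisely that this rank statistic is asymptotically linear with influence function $Z_{ij}Z_{ik} - \tfrac12 \rho_{jk}(Z_{ij}^2 + Z_{ik}^2)$; translating their notation into ours and stacking the scalar expansions into a matrix yields~\eqref{eq:Rn:mat} in setting~(GC) as well, with the same $Z_i \sim \normal_d(0,R)$, which is why the two expansions coincide.

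The main obstacle is the (GC) case: proving from scratch that replacing the true marginal cdfs by their empirical counterparts leaves the first-order behaviour unchanged requires a H\'ajek-type linearisation of the rank statistic with a delicate cancellation of the empirical-process remainders, and the resulting influence function must coincide with the efficient one for the Gaussian copula parameter. As this is exactly the content of \citet{klaassen1997efficient}, I would invoke it rather than redo it; by comparison, the (GD) part is routine, the only care being the $\oh_p(1)$ bookkeeping for the sample-mean correction and the delta method.
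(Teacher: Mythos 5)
Your proposal is correct and follows essentially the same route as the paper: for (GD) it standardises the data, expands the sample covariance (with the negligible mean correction), and applies the delta method with $\dot{\varphi}_R$ and the identity $\dot{\varphi}_R(R)=0$, while for (GC) it invokes Theorem~3.1 of \citet{klaassen1997efficient} and notes that the matrix form is just the entrywise identity repackaged via \eqref{eq:diffphi}. No gaps to report.
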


\begin{proof}
	The matrix formula~\eqref{eq:Rn:mat} is just a repackaging of the element-wise one~\eqref{eq:Rn:jk} exploiting~\eqref{eq:diffphi}.
	
	In the Gaussian distribution setting (GD), put $Z_i = D_\Sigma^{-1/2}(\xi_i - \mu)$ for $i \in \{1,\ldots,n\}$.
	The common distribution of $Z_i$ is $\normal_d(0, R)$. Let $\hat{\Sigma}_{n,Z}$ be their empirical covariance matrix, replacing $\xi_i$ by $Z_i$ in~\eqref{eq:Rnhat}.
	We have $\xi_i = \mu + \xi_i D_\Sigma^{1/2}$ and thus
	\[
		\hat{\Sigma}_n = D_\Sigma^{1/2} \hat{\Sigma}_{n,Z} D_\Sigma^{1/2}.
	\]
	As $\varphi$ reduces variables to unit scale anyway, we have
	$
		\hat{R}_n 
		= \varphi(\hat{\Sigma}_n) 
		= \varphi(\hat{\Sigma}_{n,Z}).
	$
	By the multivariate central limit theorem and Slutsky's lemma,
	\[
		\sqrt{n}(\hat{\Sigma}_{n,Z} - R)
		= \frac{1}{\sqrt{n}} \sum_{i=1}^n (Z_i Z_i^\top - R) + \oh_p(1),
		\qquad n \to \infty.
	\]
	The delta method and the identity $\varphi(R) = R$ yield
	\[
		\sqrt{n} (\hat{R}_n - R)
		= \dot{\varphi}_R \bigl( \sqrt{n}(\hat{\Sigma}_{n,Z} - R) \bigr)
		+ \oh_p(1), \qquad n \to \infty.
	\]
	The combination of the last two expansions gives~\eqref{eq:Rn:mat} in view of linearity of $\dot{\varphi}_R$ and the identity $\dot{\varphi}_R(R) = 0$, as $R$ has unit diagonal.
	
	In the Gaussian copula setting (GC), the expansion~\eqref{eq:Rn:jk} is Theorem~3.1 in \citet{klaassen1997efficient}.
	We have $Z_i = (Z_{i1},\ldots,Z_{id})$ with $Z_{ij} = \qnorm \circ F_j^{-1}(\xi_{ij})$ for $i \in \{1,\ldots,n\}$ and $j \in \{1,\ldots,d\}$. The common distribution of the random vectors $Z_i$ is $\normal_d(0, R)$ by the assumption that the copula of $\xi_i$ is Gaussian with correlation matrix $R$.
\end{proof}

\begin{rmk}
	The expansion~\eqref{eq:Rn:mat} remains valid for the empirical correlation matrix from an independent random sample $\xi_1,\ldots,\xi_n$ from a distribution with finite fourth moments and positive variances, upon defining $Z_i = D_\Sigma^{-1/2}(\xi_i - \mu)$ with $\mu$ and $\Sigma$ the population mean vector and covariance matrix, respectively.
	The random vectors~$Z_i$ have zero means and unit variances but are no longer Gaussian.
	From the expansion, the asymptotic distribution of the empirical correlation matrix can be found using the multivariate central limit theorem.
	The asymptotic distribution of $\sqrt{n}(\hat{R}_n - R)$ is a random matrix whose $d^2$ elements have a centered multivariate normal distribution the covariance matrix of which can be derived from~\eqref{eq:Rn:jk}. 
	See also \citet[Theorem~3.1.6]{kollo2006advanced}.
\end{rmk}

\section{Discussion}
\label{sec:disc}

In this paper, we investigated the possibility to rely on the properties of the 2-Wasserstein distance to define new dependence coefficients that are easy to interpret. We mostly developed the theory under a Gaussian lens, thus moving from the Wasserstein distance between distributions to the Bures--Wasserstein distance between covariance or correlation matrices.  Further, we have shown that the coefficients are particularly natural in this case and that they enjoy desirable properties. 
They can be estimated easily from an empirical covariance or correlation matrix.
The asymptotic distributions of the resulting plug-in estimators can be found by the delta method, with explicit expressions for the asymptotic variances, enabling inference.
Some questions remain open and are expected to lead to further research. 

The plug-in estimators turned out to have a positive bias, which we proposed to correct by eigenvalue shrinkage in the supplementary material. Some more developments towards bias correction would certainly be welcome, for instance in the context of the matrix of normal scores rank correlation coefficients for data drawn from a distribution with a Gaussian copula.

The Fréchet-differentiability of the maps that send a covariance matrix to its dependence coefficients paves the way for further developments in large-sample theory. In a high-dimensional setting, the correlation matrix could be estimated using regularisation techniques or exploiting modelling assumptions. In time series analysis, the focus would be on auto-covariance matrices. 

A technical challenge is to obtain the limit distribution of the plug-in estimators in case all cross-covariances are zero so that the dependence coefficients are zero. The rate of convergence may then be conjectured to be $\Oh_p(n^{-1})$ and the limit laws linear combinations of independent chi-squared random variables. 
Equally interesting is to quantify the impact of the non-linearity of the (Hadamard) derivatives in case of repeated eigenvalues. A further refinement would be to allow for positive semi-definite correlation matrices instead of positive definite ones. 

The differentiability questions we referred to are important for resampling. Indeed, the $n$-out-of-$n$ bootstrap is not consistent when the Fréchet derivative is not linear. A comprehensive and careful analysis of the bootstrap consistency in this case could also be potentially interesting per se. 

Finally, one could seek for nonparametric estimators of the distribution-based dependence coefficients $\tilde{\dep}_r$. This will require new probabilistic results to derive their limit laws---or at least guarantee the possibility to approximate their sampling distributions through a numeric scheme---as well as new algorithmic developments to determine the couplings in the maximally dependent case. Identifying the couplings furthest away from a given reference point in Wasserstein space is also an interesting theoretical challenge.

\appendix

\section{Simulation experiments}
\label{sec:simu}
In this Appendix, we investigate the plug-in estimators for the dependence coefficients by means of various simulation experiments. First, we evaluate the quality of the approximation of their finite-sample distributions by the asymptotically normal one (\ref{sec: GoFAsym}). We then numerically assess the impact of shrinking the eigenvalues of the empirical covariance matrix to reduce the inherent bias (\ref{sec: Shrink}) and finally we evaluate the actual coverage of confidence intervals based on the normal approximation (\ref{sec:simu:CI}).

\subsection{Gaussian goodness-of-fit for finite samples}
\label{sec: GoFAsym}
The Figure~\ref{fig: Simu1} presents P-P plots illustrating the asymptotic normality of the plug-in estimators in Section~\ref{sec:estim:asy}.

The results  are resented for for Gaussian data (GD) with correlation matrix estimated by $\hat{R}_{n}$ in~\eqref{eq:Rnhat}.
The standard normal distribution function is on the vertical axis while the actual sampling distribution function of $\sqrt{n} (\dep_{n,r} - \dep_r(R)) / \zeta_{n,r}$ based on 3000 independent replications is on the horizontal one. From left to right, the sample sizes are 50, 200, 1000 and 5000, respectively.

The three rows correspond to the three following settings.

\begin{enumerate}
\item A trivariate autoregressive 
matrix ($p=1$, $q=2$) as in \eqref{eq:AR-MA} with coefficient $\rho=0.25$. The true values of $\dep_1$ and $\dep_2$ are 0.026 and 0.025 respectively.
\item A trivariate autoregressive matrix ($p=1$, $q=2$) with coefficient $\rho=0.8$. The true values of $\dep_1$ and $\dep_2$ are 0.34 and 0.33 respectively.
\item A five-variate correlation matrix with $p=2$ and $q=3$ without any particular structure:
\[ 
	\begin{bmatrix}
	1.00& 	0.20&	0.15&	0.10&	0.25 \\
	0.20& 	1.00&	0.05&	0.30&	0.35 \\
	0.15&	0.05&	1.00&	0.40&	0.50 \\
	0.10&	0.30&	0.40&	1.00&	0.45 \\
	0.25&	0.35& 	0.50&	0.45&	1.00
	\end{bmatrix}.
\]
The true values of $\dep_1$ and $\dep_2$ are $0.051$ and $0.050$ respectively.
\end{enumerate}
Observing Figure~\ref{fig: Simu1} one can clearly see that in case $n = 50$, the quality of the normal approximation is much better for larger values of the coefficients. For the five-dimensional example, the lack-of-fit at $n = 50$ is rather pronounced, as one could expect given the number of matrix entries to estimate. In particular, the estimator has a large positive bias. In all three settings, the goodness-of-fit improves with the sample size, as expected. We evoke the high-dimensional case, that is, when the number of matrix entries is of the order of magnitude of $n$, in Section~\ref{sec:disc}.

\begin{figure}[ht!]
	\begin{center}
			\includegraphics[width=0.99\textwidth]{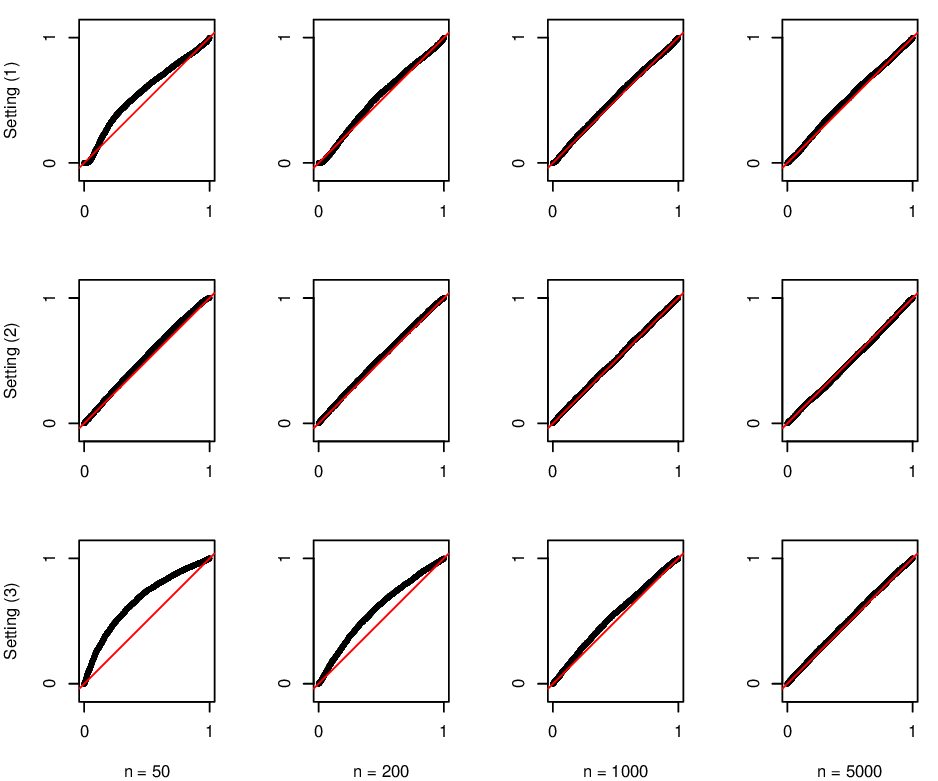}%
	\end{center}
	\caption[]{\label{fig: Simu1} \slshape\small  P-P plots for 3000 repetitions of the centred and (empirically) rescaled estimator of $\dep_1$ in the three settings from \ref{sec: GoFAsym} for increasing sample sizes (from left to right). The results are presented for an empirical correlation matrix in the case of Gaussian data.} 
\end{figure}
\begin{figure}[ht!]
	\begin{center}
			\includegraphics[width=0.99\textwidth]{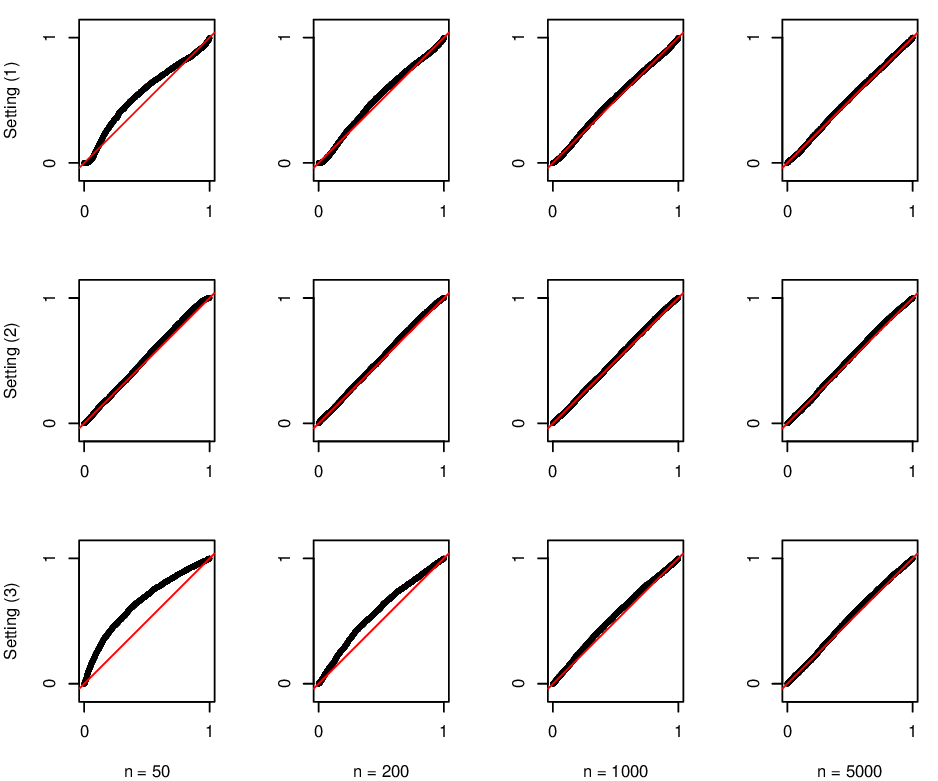}%
	\end{center}
	\caption[]{\label{fig: Simu2} \slshape\small  P-P plots for 3000 repetitions of the centred and (empirically) rescaled estimator of $\dep_2$ in the three settings from \ref{sec: GoFAsym} for increasing  sample sizes (from left to right). The results are presented for an empirical correlation matrix in the case of Gaussian data.}
\end{figure}
\subsection{Goodness-of-fit for rank-based estimation of the correlation matrix}
\label{sec: GoFAsym:Ranks}
We now repeat the simulations in the same settings as those of \ref{sec: GoFAsym} for the Gaussian copula case, that is when the estimated correlation matrix is $\check{R}_{n}$. The results for $\dep_1$ and $\dep_2$ are shown in Figures~\ref{fig: Simu3} and~\ref{fig: Simu4}, respectively.
\begin{figure}[ht!]
	\begin{center}
			\includegraphics[width=0.99\textwidth]{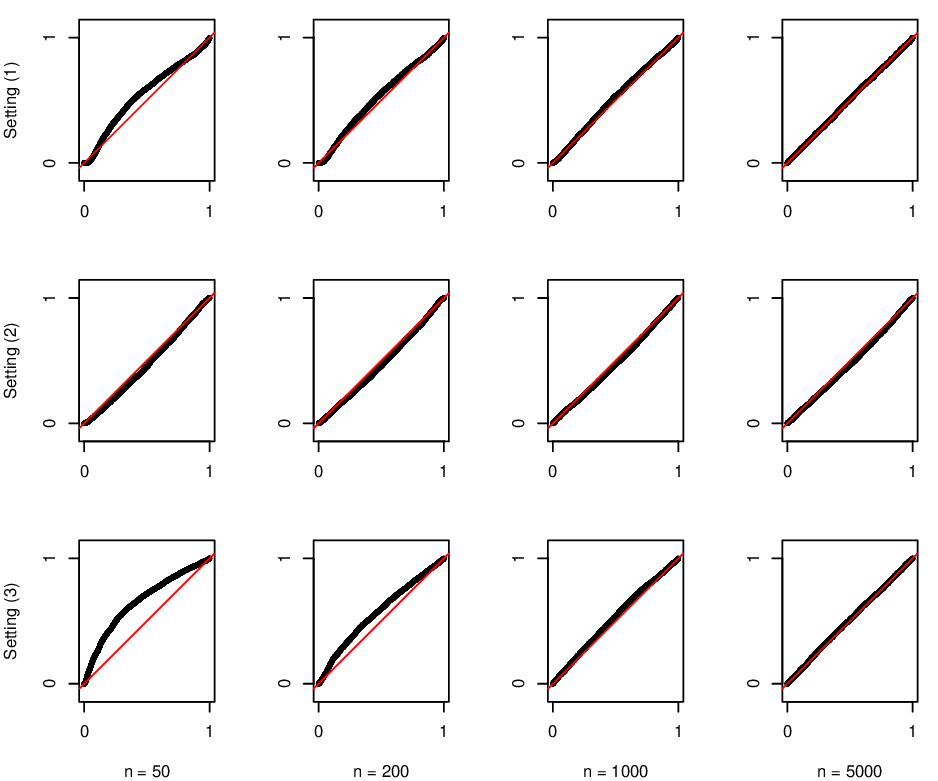}%
	\end{center}
	\caption[]{\label{fig: Simu3} \slshape\small  P-P plots for 3000 repetitions of the centred and (empirically) rescaled estimator of $\dep_2$ in the three settings from \ref{sec: GoFAsym} for increasing  sample sizes (from left to right). The results are presented for a Gaussian copula relying on $\check{R}_{n}$.}
\end{figure}
\begin{figure}[ht!]
	\begin{center}
			\includegraphics[width=0.99\textwidth]{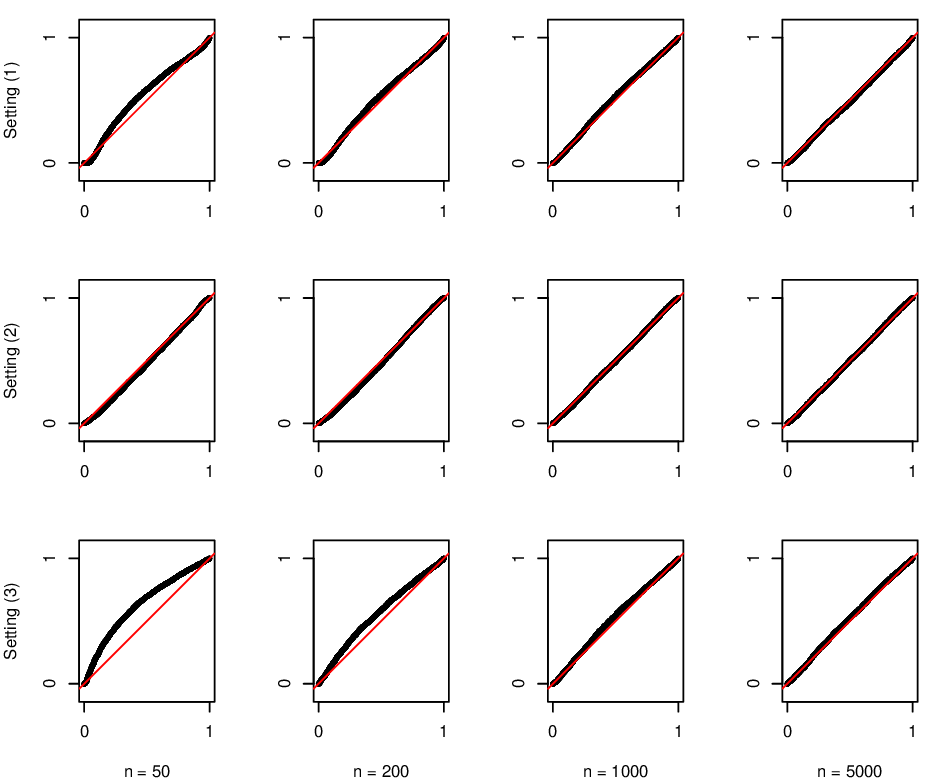}%
	\end{center}
	\caption[]{\label{fig: Simu4} \slshape\small  P-P plots for 3000 repetitions of the centred and (empirically) rescaled estimator of $\dep_2$ for the three settings from \ref{sec: GoFAsym} for increasing  sample sizes (from left to right). The results are presented for a Gaussian copula relying on $\check{R}_{n}$.}
\end{figure}

\subsection{Eigenvalue shrinkage}
\label{sec: Shrink}
The simulations in \ref{sec: GoFAsym} reveal the plug-in estimator to have a positive bias for small sample sizes. This is not surprising; it was already noted by C. Stein in the '60s and '70s that the eigenvalues of the empirical covariance matrix tend to be more spread out than their population counterparts. We refer to \citet{dey_srinivasan1985} and~\citet{donoho2018shrinkage} for references about the subject.

In the aforementioned works, new estimators of the covariance matrix were proposed.  The idea is to shrink the largest eigenvalues and increase the smaller ones to correct for the discrepancy arising. 
We follow \citet{dey_srinivasan1985}. Let $S$ be distributed according to the Wishart $W_d(\Sigma, n-1)$ distribution. The maximum likelihood estimator of the covariance matrix of the $\normal_d(\mu, \Sigma)$ distribution with unknown $\mu$ and $\Sigma$ based on an independent random sample of size $n$ has distribution $S/n$.

Let $\hat{\Sigma} = \hat{U} \hat{\Lambda} \hat{U}^\top$ where $\hat{U}$ is an orthogonal matrix and $\hat{\Lambda}$ is a diagonal matrix with elements $l_1 \ge \ldots \ge l_d$. 
\emph{Orthogonally invariant estimators} of $\Sigma$ are those of the form
\[
  \hat{\Sigma}_\ell = \hat{U} \ell( \hat{\Lambda} )  \hat{U}^\top
\]
where $\ell( \hat{\Lambda} )$ is a diagonal matrix with elements $\ell_1( \hat{\Lambda} ), \ldots, \ell_d(\hat{\Lambda})$. Many functions $\ell_j$ have been proposed that correspond to certain loss functions. The maximum likelihood estimator corresponds to $\ell_j^0( \hat{\Lambda} ) = n^{-1} l_j$. In \citet{dey_srinivasan1985}, the following choices are considered:
\begin{itemize}
\item $\ell_j^m( \hat{\Lambda} ) = d_j l_j$ (Theorem~3.1) with $d_j = 1/(n+d-2j)$ for $j=1,\ldots,d$, referred to as DS1.
\item $\ell_j^S( \hat{\Lambda} ) = d_j l_j - (l_j \log l_j) \tau(u) / (b_1 + u)$ (Theorem~3.2) where $u = \sum_{j=1}^d (\log l_j)^2$, $b_1 > 5.76 (d-2)^2/ (n+d-1)^2$ and $\tau(u)$ is a function satisfying, among others, $0 < \tau(u) < 2.4 (d-2) / (n+d-1)^2$. In their Section~4, they propose $b_1 = 5.8 (d-2)^2 / (n+d-1)$ and $\tau(u) = 1.2 (d-2) / (n+d-1)^2$. This method is referred to as  DS2.

\end{itemize}
The above shrinkage methods are based upon $\hat{\Sigma}$ sampled from $W_d(\Sigma, n)$, see \citet{dey_srinivasan1985}. Therefore, we replace $n$ by $n-1$.
These are but two choices out of a large number of shrinkage methods that depend on the loss function and the model. We refer to \citet{donoho2018shrinkage} for a survey.

In Table~\ref{tab:shrinkage}, we consider settings~(1) and~(3) from \ref{sec: GoFAsym} for sample size $n=200$. The number of replications is 3000 and the results are obtained for the empirical correlation matrix in the fully Gaussian case, that is, case~(GD) in Section~\ref{sec:estim:asy}. The entries in the table show the observed mean, median and standard deviation of the quantity $\sqrt{n} (\dep_r(\varphi(\hat{\Sigma}_\ell)) - \dep_r(R)) / \zeta_{n,r}$, where the estimator $\hat{\Sigma}_\ell$ of the covariance matrix uses one of the shrinkage functions defined above and where the estimated standard error $\zeta_{n,r}$ is based on plugging in the estimated correlation matrix $\varphi(\hat{\Sigma}_\ell)$, similar to what was done in Corollary~\ref{cor: LimitDep}.
From the results, we observe that shrinkage moves the median closer to zero in both settings while leaving the standard deviation close to one. 

There does not seem to be an important difference between DS1 and DS2. 

\begin{table}[]
	\begin{center}
	\begin{tabular}{@{}llrrrrrr@{}}
		\toprule
		\multicolumn{2}{c}{} & \multicolumn{3}{c}{$\dep_1$}                                                        & \multicolumn{3}{c}{$\dep_2$}                                                        \\ \cmidrule(l){3-5} \cmidrule(l){6-8} 
		Setting & Method                & \multicolumn{1}{c}{Mean} & \multicolumn{1}{c}{Median} & \multicolumn{1}{c}{SD} & \multicolumn{1}{c}{Mean} & \multicolumn{1}{c}{Median} & \multicolumn{1}{c}{SD} \\ \midrule
		(1) & MLE                 &      $-0.026$        &    0.118           &       1.292                 &         $-0.020$           &        0.146            &      1.282           \\
		& DS1                 &      $-0.125$        &    0.031           &       1.320                 &         $-0.116$            &        0.058            &      1.303            \\
		& DS2                 &      $-0.126$        &    0.031           &       1.320                 &         $-0.117$            &        0.058            &      1.303            \\
		\midrule
		(3) & MLE                 &      0.279       &    0.335           &       0.979                 &         0.220           &        0.261            &      0.984           \\
		& DS1                 &      0.118        &    0.174           &       0.981                 &         0.075            &        0.120            &      0.986            \\
		& DS2                 &      0.117        &    0.173           &       0.981                 &         0.074            &        0.119            &      0.986            \\
		\bottomrule \\
	\end{tabular}
	\end{center}
	\caption{\slshape\small Effect of eigenvalue shrinkage methods on the studentised estimator, $\sqrt{n} (\dep_r(\varphi(\hat{\Sigma}_\ell)) - \dep_r(R)) / \zeta_{n,r}$, at $n = 200$ in settings~(1) and~(3) from \ref{sec: GoFAsym}.}
	\label{tab:shrinkage}
\end{table}

\subsection{Coverage of confidence intervals}
\label{sec:simu:CI}

We investigate the actual coverage of the asymptotic $(1-\alpha) \times 100\%$ confidence intervals 
\[
	\bigl[\dep_r(\check{R}_n) \pm z_{1-\alpha/2} \times \zeta_{n,r}/\sqrt{n}\bigr] \cap [0, 1]
\] 
for various sample sizes, where $z_p$ is the quantile of a standard normal distribution at level~$p$. We consider settings~(1) and~(3) from \ref{sec: GoFAsym} in the Gaussian copula (GC) case, so $\check{R}_n$ and $\zeta_{n,r}$ are as in~\eqref{eq:Rncheck} and Corollary~\ref{cor: LimitDep}. 
The chosen coverage probability is 95\%. The results are presented in Tables~\ref{tab:covprob}. For each coefficient $\dep_1$ and $\dep_2$, we give the true value, the mean of the lower and upper bounds over 3000 independent replications, and, finally, the empirical coverage. We did not rely on shrinkage methods in this part. 

\begin{table}[]
	\begin{center}
	\begin{tabular}{@{}lcrrrrrrrr@{}}
		\toprule
		\multicolumn{2}{c}{} & \multicolumn{4}{c}{$\dep_1$}                                                        & \multicolumn{4}{c}{$\dep_2$}                                                        \\ \cmidrule(l){3-6} \cmidrule(l){7-10} 
		Setting & $n$               & \multicolumn{1}{c}{True} & \multicolumn{1}{c}{LB} & \multicolumn{1}{c}{UB} &  \multicolumn{1}{c}{Cov.}&  \multicolumn{1}{c}{True} & \multicolumn{1}{c}{LB} & \multicolumn{1}{c}{UB} &  \multicolumn{1}{c}{Cov.} \\ \midrule
		(1) & \phantom{00}50                 &      0.026        &    0.000           &       0.104                 &         93.8\%           &        0.025            &       0.000    &    0.098     &  92.8\%  \\
		& \phantom{0}200              &      0.026        &    0.001          &       0.057                 &         93.5\%           &        0.025            &        0.001   &    0.056     &  93.5\%   \\
		& 1000               &      0.026        &    0.014           &       0.039                 &         94.3\%            &        0.025            &        0.014    &    0.038      &  94.4\%   \\
		& 5000            &     0.026        &    0.021           &       0.032                 &         95.8\%            &        0.025            &         0.020    &    0.030      &  95.5\%  \\
		\midrule
		(3) & \phantom{00}50                 &      0.051        &    0.012           &       0.129               &         94.0\%           &        0.050            &       0.009    &    0.128     &  94.4\%  \\
		& \phantom{0}200             &      0.051        &    0.028           &       0.083                 &         94.8\%           &        0.050            &        0.027   &    0.083     &  94.9\%   \\
		& 1000             &      0.051        &    0.040           &       0.064                 &         94.6\%            &        0.050            &        0.039    &    0.064      &  94.8\%   \\
		& 5000           &     0.051        &    0.045           &       0.056                 &         95.4\%            &        0.050            &         0.045    &    0.056      &  94.3\%  \\
		\bottomrule\\
	\end{tabular}
	\end{center}
	\caption{\slshape\small Means of lower and upper bounds and actual coverage of rank-based asymptotic $95\%$ confidence intervals $[\dep_r(\check{R}_n)~\pm~z_{0.975}~\times~\zeta_{n,r}/\sqrt{n}]~\cap ~[0, 1]$ in settings~(1) and~(3) from \ref{sec: GoFAsym} over 3000 independent replications.}
	\label{tab:covprob}
\end{table}

\subsection{Shrinkage evaluation for EEG data}
\label{sec:EEG:shrink}
For the EEG case study in \ref{sec:EEG}, we conducted a preliminary assessment to evaluate whether the shrinkage methods in \ref{sec: Shrink} produce confidence intervals performing as they should.
The sample size and parameter values were taken to match those of the data.
The empirical coverage of the confidence intervals was estimated based on 2000 replications.
The results are presented in Figure~\ref{fig: PreTest}. In plots~(a) and~(c), the advantage of shrinking the eigenvalues is clearly visible for coefficient~$\dep_1$.

\begin{figure}
\begin{center}
\begin{tabular}{@{}c@{}c}
\includegraphics[width=0.5\textwidth, height=75mm]{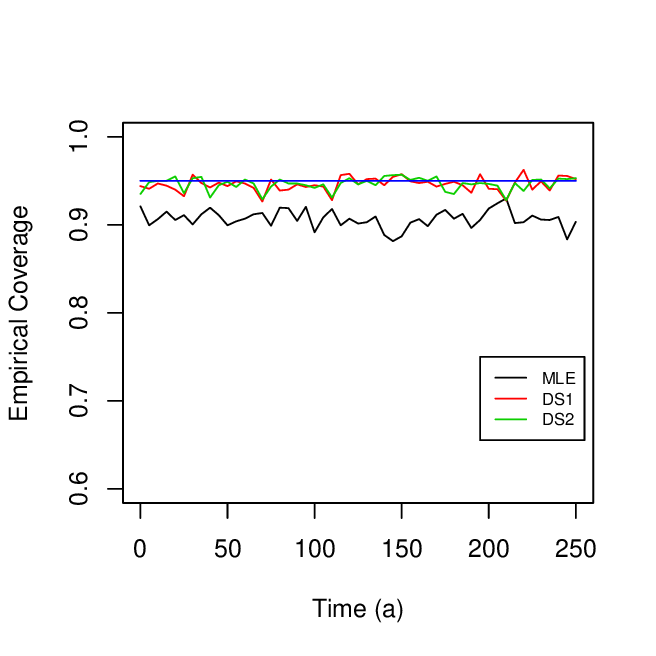}&
\includegraphics[width=0.5\textwidth, height=75mm]{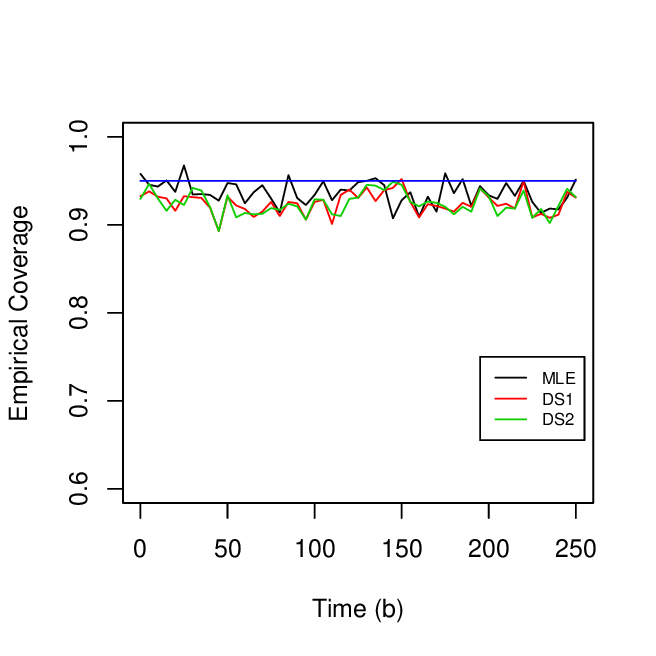}\\ 
\includegraphics[width=0.5\textwidth, height=75mm]{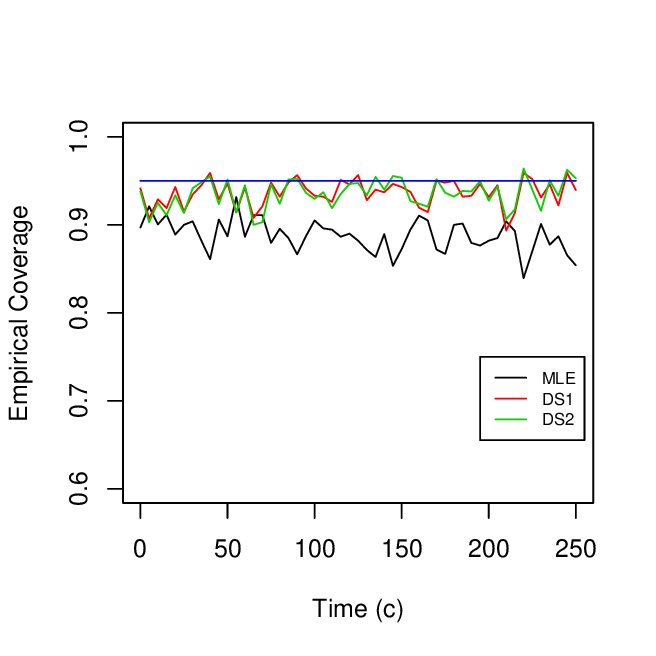}&
\includegraphics[width=0.5\textwidth, height=75mm]{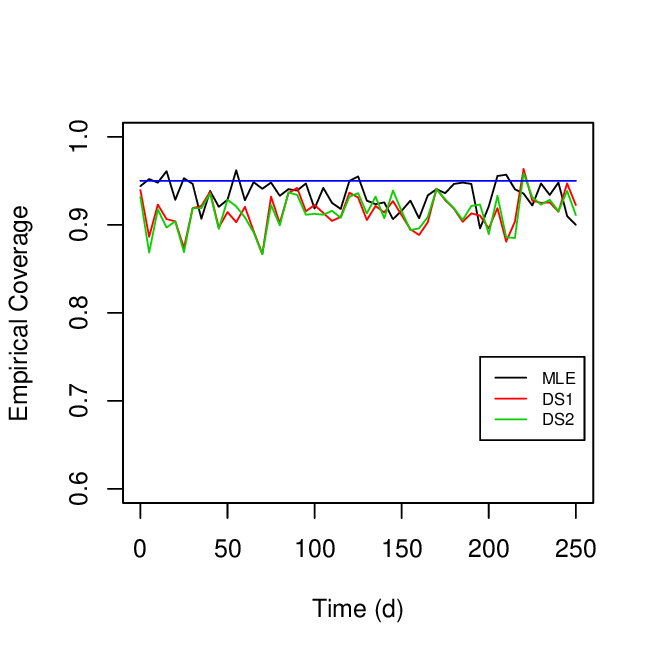}
\end{tabular}
\end{center}
\caption{\label{fig: PreTest} \slshape\small  Empirical coverage of 95\% confidence intervals estimated from 2000 replications in the Gaussian copula setting with sample size and parameters derived from the case study in \ref{sec:EEG}. MLE refers to no shrinkage while DS1 and DS2 refer to the two shrinkage methods in \ref{sec: Shrink}. (a) Alcoholic group with $\dep_1$. (b) Alcoholic group with $\dep_2$. (c) Control group with $\dep_1$. (d) Control group with $\dep_2$.}
\end{figure}

\section{Case study: EEG data}
\label{sec:EEG}

We now turn to an application on real data exhibiting a possible use of the new dependence coefficients.
We consider the electroencephalogram (EEG) dataset gathered by Henri Begleiter\footnote{At the Neurodynamics Laboratory at the State University of New York Health Center at Brooklyn.} and first analysed in \citet{zhang1995event}. Data are available for two types of patients: those suffering from alcoholism and a control group. The dataset consists of 120 trials for 122 subjects and is available on the UCI Machine Learning Archive \citep{Dua:2019}.

An EEG measures the electric activity of the brain and thus helps to understand its functioning. In the dataset we consider, the data are gathered through 64 electrodes placed on the patient's scalp.\footnote{The position of the electrodes follows the Standard Electrode Position Nomenclature put forward by the American Electroencephalographic Association in 1990.}
The electrical activity for each electrode is measured in $\mu V$ through time. Each patient is exposed to a visual stimulus during a one-second timespan during which 256 measurements are collected. The 120 trials are divided into three types of stimuli tested: a single visual stimulus, two stimuli where the second one matches the first one and two stimuli where the second one does not match the first one.  In each trial a different picture or different sets of pictures are used.

This dataset was recently analysed in \citet{solea2020copula} and \citet{anuragi2020empirical}. In this first paper, the dependence structure is modelled under a Gaussian copula assumption, which has become classical since the seminal work of \citet{liu2009nonparanormal}. Even though the Gaussian copula hypothesis may seem restrictive, it turned out quite successful and is well accepted in the field, as stressed in \citet{solea2020copula}. In the sequel, we also make the assumption that the copula is Gaussian and thus use the rank-based estimator $\dep_r( \check{R}_{n,r} )$ with $\check{R}_{n,r}$ the matrix of normal scores rank correlation coefficients in~\eqref{eq:Rncheck}.

The graphs in \citet[p.~11]{solea2020copula} present the results of different estimation procedures for the dependence graph. A visual inspection shows that the connectivity networks estimated by the different methods largely differ from one estimation procedure to another. These discrepancies motivate our analysis of the dependence between the pre-frontal (FP) and the anterio-frontal (AF) electrodes, as the methods seem to estimate different network structures for these particular blocks.  The AF region consists of the electrodes AF1, AF2, AF7, AF8 and AFZ while the FP region consists of the FP1, FP2 and FPZ electrodes. In our notation we are thus seeking to quantify dependence between a group of $p = 3$ variables and another one with $q = 5$ variables.

We chose to focus on trial No.~26. This choice is purely random and was made prior to the analysis. The only check that was made concerns the number of patients in the trial. Indeed, even though the experiment was carried out on 122 patients, certain results are missing. For the trial selected, the data for 99 patients were available. Among these 99 patients, 60 were alcoholic. Preliminary Monte-Carlo simulations evaluating the coverage probabilities of estimated confidence intervals---reported in \ref{sec:EEG:shrink}---suggested the use of the shrinkage estimator DS1 (\ref{sec: Shrink}) of the correlation matrix which is then standardised again via the square roots of the diagonal elements. This finding is purely empirical and theoretical justifications for this or other shrinkage methods in the context of the matrix of normal scores rank correlation coefficients are yet to be developed.

In the top row of Figure~\ref{fig:EEG},
we show estimates of various dependence coefficients for the two groups of patients. The coefficients are estimated at one out of five time instants to avoid overloading the graphs. To enable a proper comparison, the $\RV$ and $\overline{\RV}$ are computed on the same, shrinked matrix as the $\dep_r$ coefficients.  The interest of correcting the RV coefficient as in Remark~\ref{rem:RVadj} is clear. The various coefficients exhibit quite similar profiles over time. Interestingly, the curve of the square of the adjusted RV coefficient (not shown) would be close to $\dep_1$ and $\dep_2$.


\begin{figure}
	\begin{center}
		\begin{tabular}{@{}c@{}c}
			\includegraphics[width=0.46\textwidth, height=60mm]{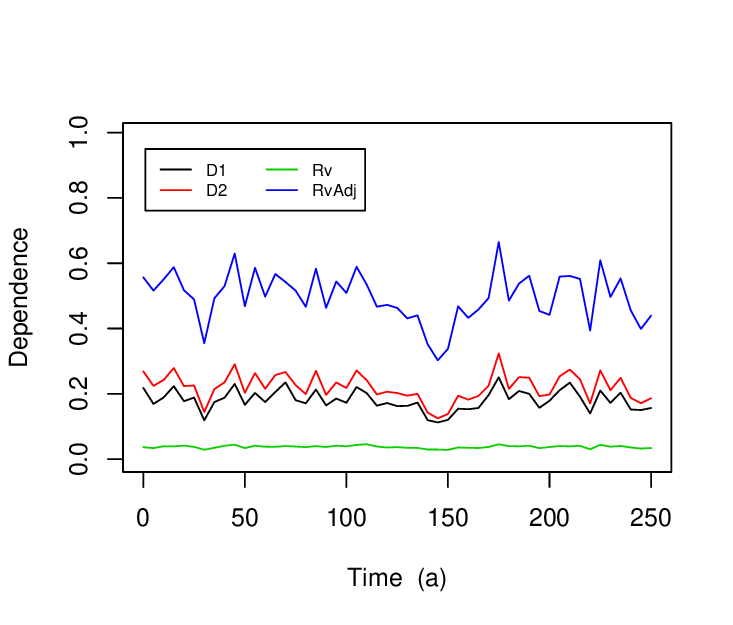}&%
			\includegraphics[width=0.46\textwidth, height=60mm]{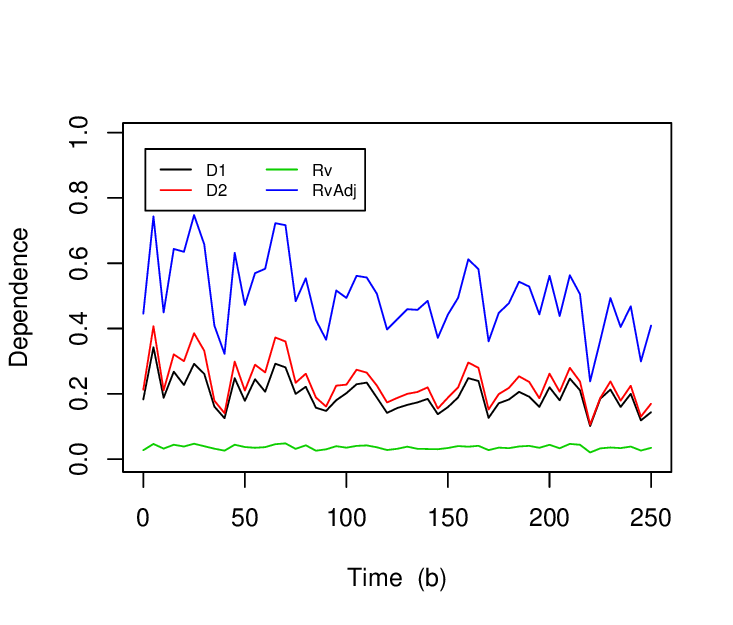}\\
		\end{tabular}
		\begin{tabular}{@{}c@{}c}
			\includegraphics[width=0.46\textwidth, height=60mm]{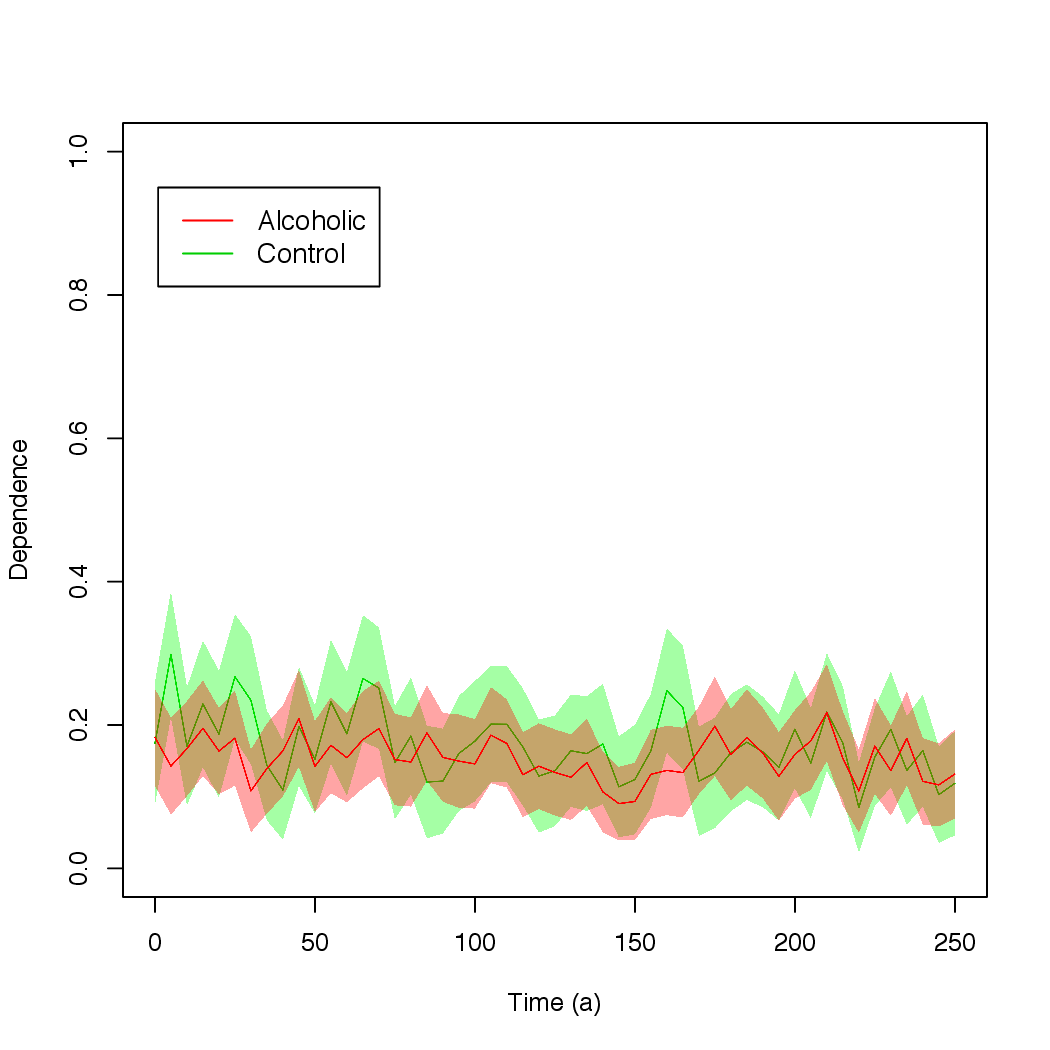}&%
			\includegraphics[width=0.46\textwidth, height=60mm]{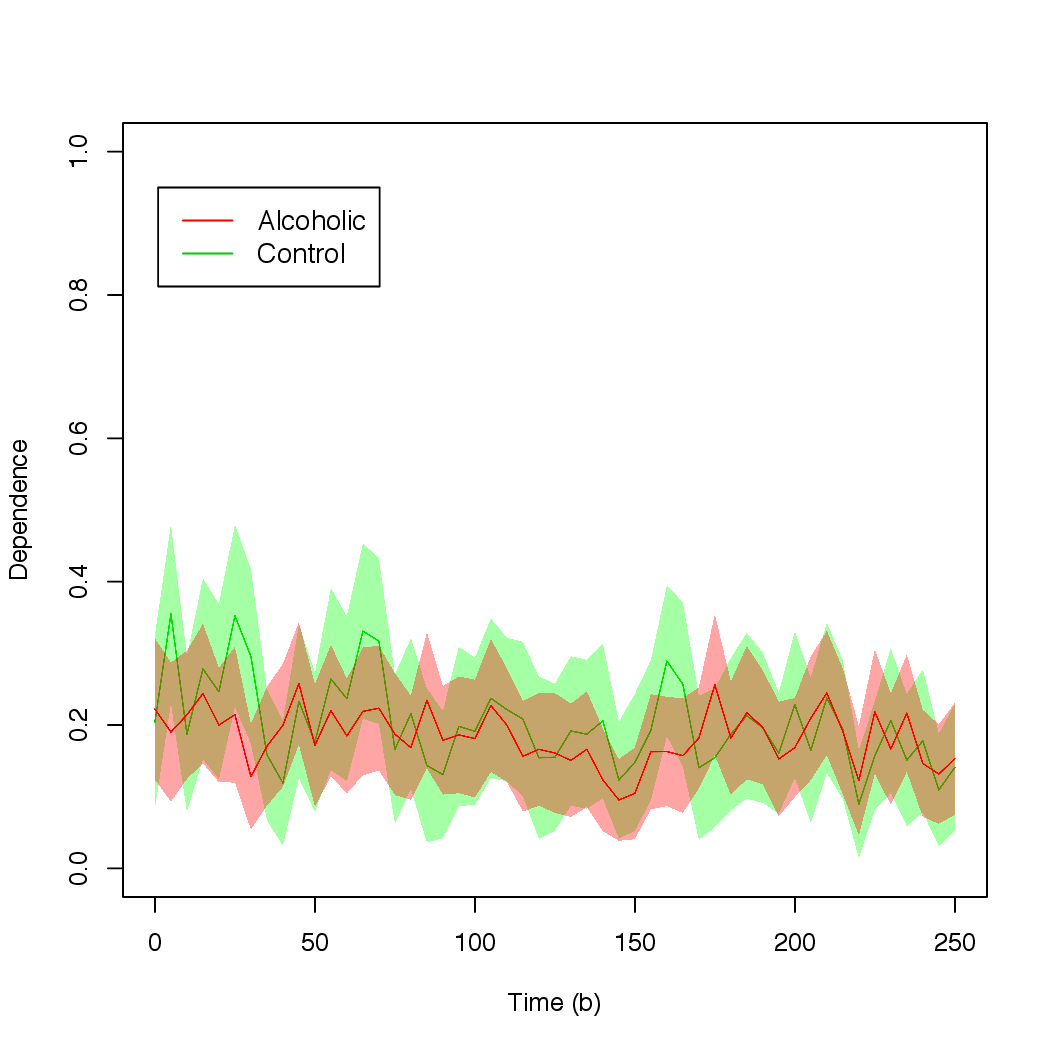}\\
		\end{tabular}
		\begin{tabular}{@{}c@{}c}
			\includegraphics[width=0.46\textwidth, height=60mm]{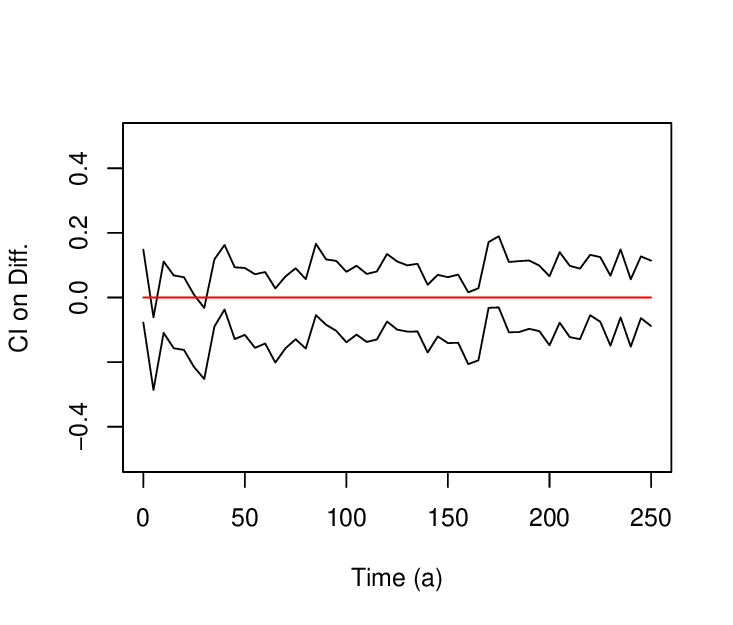}&%
			\includegraphics[width=0.46\textwidth, height=60mm]{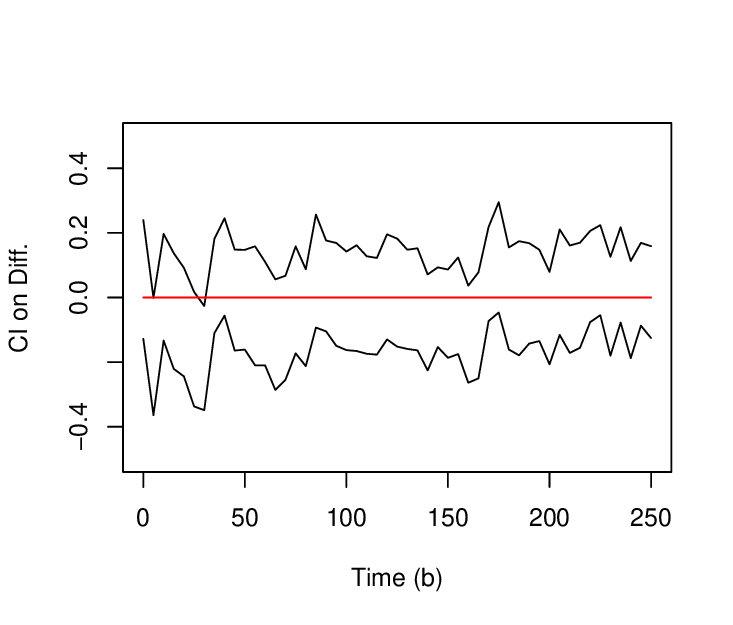}\\
		\end{tabular}
	\end{center}
	\caption[]{\slshape\small Top: Dependence across time for alcoholics (a) and control patients (b). Middle: Comparison of patients suffering from alcoholism versus control group using dependence coefficients $\dep_1$ (a) and $\dep_2$ (b). Estimates as solid lines and point-wise 95\% confidence bands as coloured shaded areas.
	Bottom: Asymptotic 95\% confidence intervals for the difference $\dep_r^{\Ctr} - \dep_r^{\Alc}$ between the two groups of patients for $\dep_1$ (a) and $\dep_2$ (b).}
	\label{fig:EEG}
\end{figure}

In the middle row of Figure~\ref{fig:EEG},
we compare the coefficients $\dep_1$ and $\dep_2$ for both types of patients and provide pointwise confidence bands.
The latter are formed out of a confidence interval at each time instant, are based on the estimated asymptotic variance and are chosen to have a 95\% coverage probability.

Assuming independence between alcoholics and control patients, an asymptotic two-sided $(1-\alpha)$ confidence interval for the difference $\dep_r^{\Ctr} - \dep_r^{\Alc}$ is
	\[
		\hat{\dep}_{r}^{\Ctr} - \hat{\dep}_{r}^{\Alc}
		\pm
		z_{1-\alpha/2} \sqrt{ \frac{(\hat{\zeta}_r^{\Ctr})^2}{n^{\Ctr}} + \frac{(\hat{\zeta}_r^{\Alc})^2}{n^{\Alc}} }
	\] 
	 with $n^{\Ctr} = 99-60 = 39$ and $n^{\Alc} = 60$ and with $z$ the standard normal quantile. We present the confidence intervals corresponding to the difference above in the bottom row of Figure~\ref{fig:EEG}.
From the data one cannot conclude that the two groups of patients have different dependence coefficients between the AF and FP regions. Still, it seems that the dependence between the two regions under study is higher for the control group than for the alcoholics. The variability of the data is too high to reject the null hypothesis of no difference, but complementary analyses with higher sample sizes might help settle the case. Also, a slight downward trend seems to be present for control patients; see Figure~\ref{fig:EEG}, top row, panel~(b). Time-varying modelling of dependence could thus also constitute a future research path.  




\section{Formulas for dependence coefficients in parametric models}
\label{sec: GausForms}
We now present some closed-form formulas for some of the coefficients presented in the examples in Section~\ref{sec:Gauss:ex}. 
In Example~\ref{ex:trivequi}, as the eigenvalues of $\Sigma$ are $1+2\rho$, $1-\rho$ and $1-\rho$, we get, after some simplifications,
\begin{align*}
	\dep_1(\Sigma) 
	&=
	\frac%
	{1 + \sqrt{1+\rho} - \sqrt{1+2\rho} - \sqrt{1-\rho}}%
	{1 + \sqrt{1 + |\rho|} - \sqrt{2+|\rho|}}.
\end{align*}
For the second coefficient, a more involved calculation yields
	\[
		\dep_2(\Sigma) = \frac{2 + \rho - \sqrt{\lambda_+(\rho)} - \sqrt{\lambda_-(\rho)}}{2 + \abs{\rho} - \sqrt{\rho^2+2\abs{\rho}+2}},
	\]
with $\lambda_{\pm}(\rho) = \frac{1}{2}[\rho^2 + 2\rho + 2 \pm \rho \sqrt{\rho^2 + 12\rho + 12}]$. 
The RV coefficient and its adjusted version in~\eqref{eq:RVadj} are
\begin{align*}
	\RV(\Sigma) &= \frac{2 \rho^2}{\sqrt{2(1+\rho^2)}}, &
	\overline{\RV}(\Sigma) &= \frac{2 \rho^2}{1 + |\rho|}.
\end{align*}

In Example~\ref{ex:comp}, for the trivariate autoregressive matrix,
one has 
\[
\RV (\Sigma, 1) = \frac{\rho^4+\rho^2}{\sqrt{2(1+\rho^2)}}  \quad \text{and} \quad \overline{\RV} (\Sigma, 1) =\frac{\rho^4+\rho^2}{1+\lvert \rho \rvert},
\]
while 
\[
\dep_1 (\Sigma, 1) =\frac{1 + \sqrt{1+\rho} +\sqrt{1-\rho}- \sqrt{1-\rho^2} - \sqrt{\lambda_{1,+}(\rho) }- \sqrt{\lambda_{1,-}(\rho) } }{
	1 + \sqrt{1 + \lvert \rho \rvert}- \sqrt{2+ \lvert \rho \rvert}}
\]
and 
\[
\dep_2(\Sigma, 1)= \frac{ 3- \sqrt{1 - \rho^2} -
 \sqrt{\lambda_{2,+}(\rho)}-
\sqrt( \lambda_{2,-}(\rho)}
{2 + \lvert \rho \rvert- \sqrt{2+ 2\lvert \rho \rvert+\rho^2}}
\]
with $\lambda_{1,\pm}(\rho)= \rho^2/2 \pm \rho \sqrt{\rho^2+8}/2 +1$ and  $\lambda_{1,\pm}(\rho)= 3 \rho^2/2 \pm (\sqrt{5} \rho \sqrt{\rho^2 + 4})/2 + 1$.
For the trivariate moving average matrix, it holds that
\[
\RV (\Sigma, 1) = \frac{\rho^2}{\sqrt{2(1+\rho^2)}}  \quad \text{and} \quad \overline{\RV} (\Sigma, 1) =\frac{\rho^2}{1+\lvert \rho \rvert},
\]
while 
\[
\dep_1 (\Sigma, 1) = \frac{ \sqrt{1+\rho} +\sqrt{1-\rho} -\sqrt{1+\rho\sqrt{2}}-\sqrt{1-\rho\sqrt{2}} }%
	{1 + \sqrt{1 + \lvert \rho \rvert}- \sqrt{2+ \lvert \rho \rvert}}.
\]
In this case, the formula for $\dep_2$ is not particularly convenient and the eigendecomposition was obtained numerically.

\bibliographystyle{elsarticle-harv}
\bibliography{Dependency_upd.bib}

\end{document}